\newcommand{\norm}[1]{\left\Vert#1\right\Vert}
\newcommand{\abs}[1]{\left\vert#1\right\vert}
\newcommand{\inp}[1]{\left\langle#1\right\rangle}
\newcommand{\argmin}{\operatornamewithlimits{arg\,min}}
\newcommand{\argmax}{\operatornamewithlimits{arg\,max}}
\def\d{\mathrm{d}}
\def\diam{\mathrm{diam}}
\def\proj{\mathrm{Proj}}
\newtheorem{assumption}{Assumption}
\newtheorem{algorithm}{Algorithm}
\begin{document}

\title{Pure Characteristics Demand Models and Distributionally Robust  Mathematical Programs with Stochastic Complementarity Constraints\thanks{
This paper is dedicated to the memory of Olvi L. Mangasarian. His contributions to linear complementarity problems have impacted greatly on our research on distributionally robust mathematical programs with stochastic complementarity constraints.}}

\author{Jie Jiang\thanks{Jie Jiang's work was partly supported by China Postdoctoral Science Foundation (Grant No. 2020M673117) and CAS AMSS-PolyU Joint Laboratory of Applied Mathematics.} \and
Xiaojun Chen\thanks{Xiaojun Chen's work was partly supported by Hong Kong Research Grant Council PolyU15300219.}
}

\institute{Jie Jiang \at College of Mathematics and Statistics, Chongqing University, Chongqing, China\\
              \email{jiangjiecq@163.com}
              \and
          Xiaojun Chen \at Department of Applied Mathematics, The Hong Kong Polytechnic University, Hong Kong\\
\email{maxjchen@polyu.edu.hk} }
\titlerunning{Distributionally robust MP with stochastic complementarity constraints}

\vspace{-0.3in}
\date{}

\maketitle

\vspace{-0.2in}

\begin{abstract}
We formulate pure characteristics demand models under uncertainties of probability distributions as distributionally robust mathematical programs with stochastic complementarity constraints (DRMP-SCC). For any fixed first-stage variable and a random realization, the second-stage problem of DRMP-SCC is a monotone linear complementarity problem (LCP). To deal with uncertainties of probability distributions of the involved random variables in the stochastic LCP, we use the distributionally robust approach. Moreover, we propose an approximation problem with regularization and discretization to solve DRMP-SCC, which is a two-stage nonconvex-nonconcave minimax optimization problem. We prove the convergence of the approximation problem to DRMP-SCC regarding the optimal solution sets, optimal values and stationary points as the regularization parameter goes to zero and the sample size goes to infinity. Finally, preliminary numerical results for investigating distributional robustness of pure characteristics demand models are reported to illustrate the effectiveness and efficiency of our approaches.
\end{abstract}

\keywords{distributionally robust \and  stochastic equilibrium  \and regularization \and discrete approximation \and pure characteristics demand }
\subclass{90C15 \and 90C33 \and 90C26}

\section{Introduction}

Pure characteristics demand models are widely used in microeconometrics to estimate parameters in utility functions of agents for given prices and production decisions \cite{BP2007pure,SJ2012constrained}. Such models have advantages in inferring consumers' preference and behavior, but face computational challenges to solve the optimization problem with set-valued stochastic equilibrium constraints as follows:
\begin{equation}
\label{CSW}
\begin{array}{cl}
\min\limits_{x\in X}& \frac{1}{2}\inp{x,Hx} + \inp{c,x} \\
\mathrm{s.t.}& A_t\mathbb{E}_P[S_t(x,\xi)]\ni b_t ~\text{for}~t=1,2,\cdots,T,
\end{array}
\end{equation}
where $X\subseteq \mathbb{R}^n$ is a compact and convex set, $H\in\mathbb{R}^{n\times n}$ is a positive semidefinite matrix, $c\in \mathbb{R}^n$, $\xi:\Omega \rightarrow \Xi\subseteq \mathbb{R}^\nu$ is a random vector defined on a probability space $(\Omega,\mathcal{F},\mathbb{P})$ and supported on $\Xi$, $P=\mathbb{P}\comp \xi^{-1}$, $A_t\in\mathbb{R}^{l\times r}$, $b_t\in \mathbb{R}^{l}$, $S_t:\mathbb{R}^n\times \Xi\rightrightarrows \mathbb{R}^{r}$, $T\geq 1$ is the number of markets, $\mathbb{E}_P[S_t(x,\xi)]$ is a Aumann's (set-valued) expectation for multifunction and for given $(x,\xi)\in X\times \Xi$,
\begin{equation}
\label{UT}
S_t(x,\xi):=\argmax \{\inp{s,u_t(x,\xi)}:\inp{e,s}\leq 1, s\geq 0\}.
\end{equation}
Here $u_t:\mathbb{R}^n\times \Xi\rightarrow \mathbb{R}^{r}$ is the consumers' utility function in market $t$ and $e\in \mathbb{R}^{r}$ is the vector with all elements being 1.

To efficiently solve problem (\ref{CSW}), Pang, Su and Lee \cite{PSL2015constructive} characterized consumers' purchase decision in the constraints of problem (\ref{CSW}) by linear complementarity problems and proposed the following quadratic program with stochastic complementarity constraints (QP-SCC):
\begin{equation}
\label{PSL}
\begin{array}{cl}
\min\limits_{x\in X}& \frac{1}{2}\inp{x,Hx} + \inp{c,x} \\
\mathrm{s.t.}& A_t\mathbb{E}_P[s_t(x,\xi)]= b_t~\text{for}~t=1,2,\cdots,T,\\
& 0\leq z_t(x,\xi) \bot M z_t(x,\xi) + q_t(x,\xi)\geq 0 ~\text{for a.e.}~\xi\in \Xi,
\end{array}
\end{equation}
where
\begin{equation*}
M=
\begin{pmatrix}
0 & e\\
-e^\top & 0
\end{pmatrix}\in \mathbb{R}^{(r+1)\times (r+1)},\quad
q_t(x,\xi)=
\begin{pmatrix}
-u_t(x,\xi)\\
1
\end{pmatrix}\in  \mathbb{R}^{r+1},
\end{equation*}
\begin{equation*}
s_t(x,\xi) \in  \mathbb{R}^{r}, \quad
z_t(x,\xi)=
\begin{pmatrix}
s_t(x,\xi)\\
\gamma_t(x,\xi)
\end{pmatrix}\in  \mathbb{R}^{r+1}
\end{equation*}
and a.e. is the short for almost everywhere. The pioneered QP-SCC formulation opened a way to develop optimization algorithms for solving  pure characteristics demand models.
In \cite{CSW2015regularized}, Chen, Sun and Wets proposed a penalty approach:
\begin{equation}
\label{CSW-1}
\begin{array}{cl}
\min\limits_{x\in X }& \frac{1}{2}\inp{x,Hx} + \inp{c,x} + \varrho \sum_{t=1}^T\norm{A_t\mathbb{E}_P[s_t(x,\xi)] - b_t}^2 \\
\mathrm{s.t.}& 0\leq z_t(x,\xi) \bot M z_t(x,\xi) + q_t(x,\xi)\geq 0, ~  t=1,2,\cdots,T,
\end{array}
\end{equation}
where $\varrho >0$ is a penalty parameter.

In problems (\ref{CSW})-(\ref{CSW-1}), the probability distribution $P$ is supposed to be known exactly.  However, in practice the true probability distribution can hardly be acquired. This observation motivates us to consider a class of distributionally robust stochastic mathematical programs with complementarity constraints (DRMP-SCC) as follows:
\begin{equation}
\label{DREC}\tag{P}
\begin{array}{cl}
\min& \Phi(x,y):=\theta(x) + \max\limits_{P\in\mathcal{P}}h(\mathbb{E}_P[f(x,y(\xi),\xi)]) \\
\mathrm{s.t.}& x \in X,\, 0\leq y(\xi) \bot M(\xi)y(\xi) + q(x,\xi) \geq 0~\text{for a.e.}~\xi\in \Xi,
\end{array}
\end{equation}
where $y: \Xi \to \mathbb{R}^m$ is a measurable mapping, $\theta: \mathbb{R}^n\rightarrow \mathbb{R}$, $f:\mathbb{R}^n\times\mathbb{R}^m\times\Xi\rightarrow \mathbb{R}^l$, $h:\mathbb{R}^l\rightarrow\mathbb{R}$, $\mathcal{P}\subseteq \mathcal{P}(\Xi)$ with $\mathcal{P}(\Xi)$ denoting the collection of all probability distributions supported on $\Xi$, $q:\mathbb{R}^n \times \Xi\rightarrow \mathbb{R}^m$,
$M:\Xi\rightarrow \mathbb{R}^{m\times m}$ and $M(\xi)$ is positive semidefinite for a.e. $\xi \in \Xi$. For fixed $x$ and $\xi$, a feasible vector $y(\xi)$ is a solution of the monotone LCP. The error bounds for
the monotone LCP and  exact penalty theory for mathematical programms with linear complementarity constraints established by Mangasarian et al. in \cite{M1990error,M1992global,MF1967fritz,MP1997exact,MR1994new,MS1986error} have inspired us to solve problem \eqref{DREC}.

Throughout this paper, we assume that $\theta$, $h$ and $f(\cdot,\cdot,\xi)$ are continuously differentiable. Moreover, $\theta$, $h$, $M$,  $q$ and $f(\cdot,\cdot,\xi)$ are Lipschitz continuous with Lipschitz moduli $L_\theta$, $L_h$, $L_M$, $L_q$ and $L_f(\xi)$ satisfying $\max_{P\in {\cal P}} \mathbb{E}_P[L^2_f(\xi)]<\infty$,  respectively. We also assume that problem \eqref{DREC} satisfies the relatively complete recourse condition that is for every $x\in X$ and a.e. $\xi\in \Xi$, the solution set
$\mathrm{SOL}(M(\xi),q(x,\xi))$ of the complementarity constraints in \eqref{DREC}, denoted by
$\mathrm{LCP}(M(\xi),q(x,\xi))$ is nonempty and $\mathbb{E}_P[f(x,y(\xi),\xi)]$ is well-defined for all $ P \in \mathcal{P}$ and $y(\xi) \in \mathrm{SOL}(M(\xi),q(x,\xi))$.
Moreover, we assume that the ambiguity set $\mathcal{P}$ is defined by a general moment
information as follows:
\begin{equation}
\label{AS}
\mathcal{P}= \left\{P\in\mathcal{P}(\Xi): \mathbb{E}_P[\Psi(\xi)]\in\Gamma \right\},
\end{equation}
where $\Psi$ is a continuous random mapping consisting of vectors and/or matrices with measurable
random components, and $\Gamma$ is a closed convex cone in the Cartesian product of some finite dimensional vector and/or matrix spaces. The ambiguity set defined in \eqref{AS} is a very general form and includes many commonly-used moment ambiguity sets, such as the moment ambiguity set in \cite{DY2010distributionally}. For more examples, we refer to \cite[Examples 3-5]{LPX2019discrete}.

Let $\Xi^k=\{\xi^1,\cdots,\xi^k\}$ be a set of $k$ samples of $\xi$ and  define the discrete approximation of $\mathcal{P}$ by
$$
\mathcal{P}_k = \left\{p\in\mathbb{R}_+^k:
 \sum_{i=1}^k p_i = 1,~\sum_{i=1}^k p_i\Psi(\xi^i)\in\Gamma\right\}.$$

We consider the discrete approximation problem of \eqref{DREC} as follows:
\begin{equation}
\label{DDREC}\tag{$\mathrm{P}_k$}
\begin{array}{cl}
\min &\Phi_k(x,\mathbf{y}):= \theta(x) + \max\limits_{p\in\mathcal{P}_k}h( F(x,\mathbf{y})p ) \\
\mathrm{s.t.} &x\in X, \, 0\leq \mathbf{y}  \bot \mathbf{M}\mathbf{y}  + \mathbf{q}(x) \geq 0,
\end{array}
\end{equation}
where $F(x,\mathbf{y})=(f(x,y(\xi^1),\xi^1), f(x,y(\xi^2),\xi^2),\cdots, f(x,y(\xi^k),\xi^k))$,
\begin{align*}
&\mathbf{y}=
\begin{pmatrix}
y(\xi^1)\\
y(\xi^2)\\
\vdots\\
y(\xi^k)
\end{pmatrix},~
\mathbf{M}=
\begin{pmatrix}
M(\xi^1) & 0 & \cdots &0\\
0 & M(\xi^2) & \cdots &0\\
\vdots & \vdots & \ddots & \vdots \\
0 & 0 & \cdots & M(\xi^k)\\
\end{pmatrix}~\text{and}~
\mathbf{q}(x)=
\begin{pmatrix}
q(x,\xi^1)\\
q(x,\xi^2)\\
\vdots\\
q(x,\xi^k)
\end{pmatrix}.
\end{align*}
Moreover, to develop numerical methods and convergence analysis, we consider the following regularized problems of \eqref{DREC} and \eqref{DDREC}, respectively,
\begin{equation}
\label{R-DREC}\tag{$\mathrm{P}_\epsilon$}
\begin{array}{cl}
\min & \Phi_\epsilon(x,y):=\theta(x) + \max\limits_{P\in\mathcal{P}} h(\mathbb{E}_P[f(x,y(\xi),\xi)]) \\
\mathrm{s.t.}& x\in X,\, 0\leq y(\xi) \bot (M(\xi)+\epsilon I)y(\xi) + q(x,\xi) \geq 0~\text{for a.e.}~\xi\in \Xi
\end{array}
\end{equation}
and
\begin{equation}
\label{DREC-1}\tag{$\mathrm{P}_{\epsilon,k}$}
\begin{array}{cl}
\min & \Phi_{\epsilon,k}(x,\mathbf{y}):=\theta(x) + \max\limits_{p\in\mathcal{P}_k} h( F(x,\mathbf{y})p ) \\
\mathrm{s.t.} &x\in X, \,0\leq \mathbf{y}  \bot (\mathbf{M} + \epsilon I)  \mathbf{y}  + \mathbf{q}(x) \geq 0,
\end{array}
\end{equation}
where $\epsilon>0$ is the regularization parameter and $I$ is the identity matrix with proper dimension.

Since $M(\xi)$ is positive semidefinite for fixed $\xi$, the complementarity problem LCP$(M(\xi)+\epsilon I, q(x,\xi))$ has a unique solution \cite{CPS1992linear}, denoted by $\hat{y}_\epsilon(x,\xi)$. Moreover, from the Lipschitz continuity of $q(\cdot,\xi)$, $\hat{y}_\epsilon(\cdot,\xi)$ is Lipschitz continuous \cite{CX2008perturbation}.  Analogously, LCP$(\mathbf{M} + \epsilon I, \mathbf{q}(x))$ has also a unique solution, denoted by $\hat{\mathbf{y}}_\epsilon(x)$, which is also Lipschitz continuous with respect to (w.r.t.) $x$.

Mathematical programming with equilibrium constraints (MPEC) has been extensively studied    \cite{IS2008active,LPR1996mathematical,MP1997exact}. Structural properties, discrete approximation based on sampling and numerical methods of stochastic MPEC with deterministic probability distribution have been investigated  \cite{LCF2009solving,LF2010stochastic,LSS2016approximation,SX2008stochastic,XY2011approximating}.
To the best of our knowledge, there is little discussion on distributionally robust MPEC.
Moreover, due to the complementarity constraints and the composite structure of the objective function, the minimax  problems \eqref{DREC}, \eqref{R-DREC} and \eqref{DREC-1} are generally nonconvex-nonconcave and their saddle points may not exist.
We will focus on their minimax points, minimizers in $x$-space and corresponding optimality conditions.

The main contributions of the paper are summarized as follows.
\begin{itemize}

\item Inspired by the constructive reformulations of pure characteristics demand models in \cite{PSL2015constructive}, we propose a DRMP-SCC model (\ref{DREC}) under uncertainties of probability distributions. We give the definitions of global and local minimax points to capture the optima of the nonconvex-nonconcave minimax problem (\ref{DREC}). Some sufficient conditions of existence of solutions are derived.

\item Under certain conditions, we prove the convergence of problem (\ref{DREC-1}) to problem (\ref{DREC}) regarding optimal solution sets and optimal values as the regularization parameter $\epsilon \downarrow 0$ and the sample size $k\rightarrow\infty$.

\item We define stationary points of problems (\ref{DREC}), (\ref{DDREC}) and (\ref{DREC-1}) in the block coordinatewise sense and establish the convergence of stationary points of problem (\ref{DREC-1}) to those of problem (\ref{DREC}) as the regularization parameter $\epsilon \downarrow 0$ and the sample size $k\rightarrow \infty$.

\end{itemize}

{\bf Notations.}  $\mathbb{B}$ denotes the closed unit ball centered at original point in the corresponding space. $\mathbb{R}_+^n$ denotes the set of nonnegative vectors in $\mathbb{R}^n$. $\norm{\cdot}$ denotes the Euclidean norm of vectors or the induced matrix norm.
$\diam(X):=\sup_{x,z\in X}\norm{x-z}$ denotes the diameter of $X$. $\d(x,Z)=\inf_{z\in Z}\|x-z\|$ and $\d(X, Z)=\sup_{x\in X}\inf_{z\in Z}\|x-z\|$ for $x,z\in \mathbb{R}^n$ and $X, Z\subseteq \mathbb{R}^n$. $\mathrm{int}(X)$ denotes the interior of $X$.

This paper is organized as follows. In Section \ref{Sec2}, we give some preliminaries on the ambiguity set ${\cal P}$ and its approximation ${\cal P}_k$. In Section \ref{Sec3}, we give definitions of global and local minimax points of problems \eqref{DREC}, \eqref{R-DREC} and \eqref{DREC-1}, and some existence results. After that, we prove the convergence of the solution set and optimal value of problem (\ref{DREC-1}) to those of problem (\ref{DREC}) as $\epsilon \downarrow 0$ and $k\rightarrow \infty$. In Section \ref{Sec4}, we first give definitions of stationary points of problems \eqref{DREC}, \eqref{DDREC} and \eqref{DREC-1} and then study convergence assertions on the stationary points of problem (\ref{DREC-1}) to those of problem (\ref{DREC}). In Section \ref{Sec5}, we report numerical results on the pure characteristics demand model under uncertainties of probability distributions. In Section \ref{Sec6}, we give concluding remarks.

\section{Preliminaries}\label{Sec2}

Note that for each $p\in\mathcal{P}_k$, it uniquely determines a discrete probability distribution
$\sum_{i=1}^k p_i \mathbf{1}_{\xi^i}$,
where $\mathbf{1}_{\xi^i}(\cdot)$ is the indicator function, namely, $\mathbf{1}_{\xi^i}(\xi)=1$ if $\xi=\xi^i$; $0$ otherwise. Thus, in what follows, we will not distinguish $p$ and the corresponding probability distribution. In other words, we can write
$$\mathcal{P}_k = \left\{P\in\mathcal{P}(\Xi^k): \mathbb{E}_P[\Psi(\xi)]\in\Gamma\right\}.$$

Based on the set $\Xi^k=\{\xi^1,\cdots,\xi^k\}$, we have the corresponding Voronoi tessellation of $\Xi$,
\begin{align*}
\Xi_i:=\left\{\xi\in\Xi: \norm{\xi-\xi^i}=\min_{1\leq j\leq k}\norm{\xi-\xi^j} \right\}, ~  i=1,\cdots,k.
\end{align*}
 Obviously, $\Xi=\bigcup_{i=1}^k \Xi_i$ and $\mathrm{int}(\Xi_i)\cap\mathrm{int}(\Xi_j)=\emptyset$ for any $i\neq j$.

In this paper, we make a commonly employed Slater type assumption for ambiguity set \eqref{AS} as follows (see e.g. \cite{DY2010distributionally,LPX2019discrete}).

\begin{assumption}
\label{Assu4}
There exist $P_0\in\mathcal{P}(\Xi)$ and $\alpha>0$ such that
$$\mathbb{E}_{P_0}[\Psi(\xi)] + \alpha\mathbb{B}\subseteq \Gamma.$$
\end{assumption}

\begin{proposition}
\label{Prop1}
Under Assumption \ref{Assu4}, there exists $\bar{k}>0$ such that $\mathcal{P}_k$ is nonempty for any $k\ge \bar{k}.$
\end{proposition}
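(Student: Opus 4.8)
The plan is to construct, for each $k$, an explicit candidate point in $\mathcal{P}_k$ by discretizing the Slater distribution $P_0$ of Assumption~\ref{Assu4} along the Voronoi tessellation $\{\Xi_i\}_{i=1}^k$, and then to show that the generalized moment of this discretization falls within the $\alpha$-ball around $\mathbb{E}_{P_0}[\Psi(\xi)]$ once $k$ is large, so that Assumption~\ref{Assu4} forces it into the cone $\Gamma$.

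First I would assign to each sample $\xi^i$ the $P_0$-mass of its Voronoi cell. Since $\Xi=\bigcup_{i=1}^k\Xi_i$ with the interiors pairwise disjoint, breaking ties on the (lower-dimensional) cell boundaries yields a measurable partition $\{\tilde{\Xi}_i\}_{i=1}^k$ of $\Xi$ with $\tilde{\Xi}_i\subseteq\Xi_i$. Setting $p_i^k:=P_0(\tilde{\Xi}_i)$ gives $p_i^k\ge 0$ and $\sum_{i=1}^k p_i^k=1$, so $p^k$ is a feasible probability vector; the only requirement left to check is the cone membership $\sum_{i=1}^k p_i^k\Psi(\xi^i)\in\Gamma$. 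To control this, I would compare the discretized moment with the true one by writing
$$\sum_{i=1}^k p_i^k\Psi(\xi^i)-\mathbb{E}_{P_0}[\Psi(\xi)]=\sum_{i=1}^k\int_{\tilde{\Xi}_i}\bigl(\Psi(\xi^i)-\Psi(\xi)\bigr)\,\mathrm{d}P_0(\xi),$$
and bounding its norm by $\int_{\Xi}\bigl\|\Psi(\xi^{i(\xi)})-\Psi(\xi)\bigr\|\,\mathrm{d}P_0(\xi)$, where $i(\xi)$ indexes the cell containing $\xi$. By the defining property of the Voronoi cells, $\|\xi-\xi^{i(\xi)}\|=\min_{1\le j\le k}\|\xi-\xi^j\|\le\mathrm{d}(\Xi,\Xi^k)$, so continuity of $\Psi$ together with the sample-density property $\mathrm{d}(\Xi,\Xi^k)\to 0$ drives the integrand to zero; uniform continuity on compact $\Xi$ (or dominated convergence with the integrability of $\Psi$ under $P_0$ when $\Xi$ is unbounded) then gives $\bigl\|\sum_{i=1}^k p_i^k\Psi(\xi^i)-\mathbb{E}_{P_0}[\Psi(\xi)]\bigr\|\to 0$.

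Hence there exists $\bar{k}>0$ such that for all $k\ge\bar{k}$ we have $\bigl\|\sum_{i=1}^k p_i^k\Psi(\xi^i)-\mathbb{E}_{P_0}[\Psi(\xi)]\bigr\|<\alpha$, i.e. $\sum_{i=1}^k p_i^k\Psi(\xi^i)\in\mathbb{E}_{P_0}[\Psi(\xi)]+\alpha\mathbb{B}\subseteq\Gamma$ by Assumption~\ref{Assu4}. Therefore $p^k\in\mathcal{P}_k$, which proves $\mathcal{P}_k\neq\emptyset$ for $k\ge\bar{k}$. The step I expect to be the main obstacle is not the cone-membership conclusion, which is immediate once the $\alpha$-ball estimate is available, but rather securing the uniform moment error: one must guarantee the density $\mathrm{d}(\Xi,\Xi^k)\to 0$ of the samples and handle a possibly unbounded support $\Xi$ and the mixed vector/matrix components of $\Psi$ within a single norm estimate, which is where any integrability or tightness hypotheses on $\Psi$ and $P_0$ will be used.
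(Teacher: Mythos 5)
Your construction for bounded $\Xi$ is essentially the paper's own proof: both assign to each sample $\xi^i$ the $P_0$-mass of its Voronoi cell, bound the discretization error of the moment $\mathbb{E}_{P_0}[\Psi(\xi)]$ using continuity of $\Psi$ and the shrinking of the cells, and then invoke the Slater ball of Assumption \ref{Assu4} to conclude the cone membership $\sum_{i=1}^k p_i^k\Psi(\xi^i)\in\Gamma$. (The paper phrases the error bound via the mean value theorem for integrals, $\mathbb{E}_{P_0}[\Psi(\xi)]=\sum_i \Psi(\tilde{\xi}^i)P_0(\Xi_i)$, rather than your direct integral decomposition; this is a cosmetic difference. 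Both arguments also share the same requirement that the samples become dense, which the paper secures by choosing such a sequence.)

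However, your treatment of the unbounded case has a genuine gap. If $\Xi$ is unbounded, then for any finite sample set the quantity $\mathrm{d}(\Xi,\Xi^k)=\sup_{\xi\in\Xi}\min_{1\le i\le k}\|\xi-\xi^i\|$ cannot tend to zero (it is infinite whenever $\Xi$ is unbounded), so the estimate $\|\xi-\xi^{i(\xi)}\|\le \mathrm{d}(\Xi,\Xi^k)\to 0$ that drives your integrand to zero is unavailable; at best one has pointwise convergence $\xi^{i(\xi)}\to\xi$ under a local density assumption. Moreover, your fallback to dominated convergence requires a $P_0$-integrable function dominating $\|\Psi(\xi^{i(\xi)})-\Psi(\xi)\|$ uniformly in $k$, and integrability of $\Psi$ under $P_0$ alone does not supply one: the term $\|\Psi(\xi^{i(\xi)})\|$ is evaluated at shifted points, and a natural envelope such as $\sup_{\|\zeta-\xi\|\le 1}\|\Psi(\zeta)\|$ need not be $P_0$-integrable. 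The paper avoids this entirely by truncation: it conditions $P_0$ on a large ball $\Xi_b$, shows $\mathbb{E}_{\bar{P}_0}[\Psi(\xi)]\to\mathbb{E}_{P_0}[\Psi(\xi)]$ as $b\to\infty$ so that the truncated distribution satisfies the Slater condition with radius $\alpha/2$, and then runs the bounded-support argument on $\Xi_b$. You would need to add this truncation (or an equivalent tightness step) to make your unbounded case rigorous.
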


The proof of Proposition \ref{Prop1} is given in Appendix.

To measure the distance between two probability distributions (and thus two ambiguity sets), we use the well-known Wasserstein metric.

\begin{definition}[\cite{KR1958space}]\label{Def4}
Let $\mathcal{H}:=\{\hbar:\Xi\rightarrow\mathbb{R}: \abs{\hbar(\xi_1)- \hbar(\xi_2)} \leq \norm{\xi_1-\xi_2}\}$. The Wasserstein metric between $P,Q\in\mathcal{P}(\Xi)$ is defined as
$$\mathbb{D}_W(P,Q)=\sup_{\hbar\in \mathcal{H}} \abs{\mathbb{E}_P[\hbar(\xi)]- \mathbb{E}_Q[\hbar(\xi)]}.$$
\end{definition}

Notice that Definition \ref{Def4} gives the definition of Wasserstein metric by using the Kantorovich-Rubinstein theorem. Another definition of Wasserstein metric is based on the joint probability distributions with marginal distributions $P$ and $Q$, see e.g. \cite{V2003topics} for more details.

The deviation distance between $\mathcal{P},\mathcal{Q}\subseteq \mathcal{P}(\Xi)$ induced by Wasserstein metric is denoted by $\mathbb{D}_W(\mathcal{P}, \mathcal{Q})$, namely,
$$\mathbb{D}_W(\mathcal{P}, \mathcal{Q}) := \sup_{P\in\mathcal{P}}\inf_{Q\in \mathcal{Q}} \mathbb{D}_W(P,Q).$$

Obviously, $\mathcal{P}_k\subseteq \mathcal{P}$, which implies that the deviation distance between $\mathcal{P}_k$ and $\mathcal{P}$ is always zero, i.e. $\mathbb{D}_W(\mathcal{P}_k, \mathcal{P})=0$. To derive the estimation of $\mathbb{D}_W(\mathcal{P}, \mathcal{P}_k)$, we assume in the rest of this section that $\Xi$ is bounded and denote
\begin{equation}
\label{gs7}
\Delta:=\sup_{P\in\mathcal{P}(\Xi)}\mathbb{D}_W(P,P_0) \leq \diam(\Xi)<+\infty,
\end{equation}
where $P_0$ is defined in Assumption \ref{Assu4}.

The following Hoffman's type lemma is from \cite[Theorem 2]{LPX2019discrete}.

\begin{lemma}[\cite{LPX2019discrete}]
\label{Th1}
Let Assumption \ref{Assu4} hold. Then, for any $Q\in\mathcal{P}(\Xi)$,
\begin{equation*}
\mathbb{D}_W(Q,\mathcal{P})\leq \frac{\Delta}{\alpha} \d(\mathbb{E}_Q[\Psi(\xi)], \Gamma),
\end{equation*}
where $\alpha$ is defined in Assumption \ref{Assu4} and $\Delta$ is defined in \eqref{gs7}.
\end{lemma}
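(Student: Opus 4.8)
The plan is to bound the deviation $\mathbb{D}_W(Q,\mathcal{P})$ by exhibiting a \emph{single} feasible distribution $P\in\mathcal{P}$ that is close to $Q$ in the Wasserstein metric, since $\mathbb{D}_W(Q,\mathcal{P})=\inf_{P\in\mathcal{P}}\mathbb{D}_W(Q,P)$. The idea is to drag $Q$ toward the Slater center $P_0$ of Assumption \ref{Assu4}: the mixture $P_\lambda:=(1-\lambda)Q+\lambda P_0$ interpolates between $Q$ (whose moment may violate the constraint) and $P_0$ (which satisfies it with margin $\alpha$), and I would choose the weight $\lambda\in[0,1]$ just large enough to pull the moment vector back into $\Gamma$. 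Write $a:=\mathbb{E}_Q[\Psi(\xi)]$, $b:=\mathbb{E}_{P_0}[\Psi(\xi)]$ and $d:=\d(a,\Gamma)$; the claim is trivial when $d=0$, because then $a\in\Gamma$ as $\Gamma$ is closed, so assume $d>0$.

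First I would verify feasibility of $P_\lambda$. Let $\hat a$ be the projection of $a$ onto the closed convex cone $\Gamma$, so $\hat a\in\Gamma$ and $\norm{a-\hat a}=d$. By linearity of expectation, $\mathbb{E}_{P_\lambda}[\Psi(\xi)]=(1-\lambda)a+\lambda b$, which I decompose as $(1-\lambda)\hat a+\bigl[(1-\lambda)(a-\hat a)+\lambda b\bigr]$. The first term lies in $\Gamma$ because $\Gamma$ is a cone and $\hat a\in\Gamma$. For the bracket, Assumption \ref{Assu4} gives $b+\alpha\mathbb{B}\subseteq\Gamma$, hence $\lambda b+\lambda\alpha\mathbb{B}\subseteq\Gamma$ by the cone property; since $\norm{(1-\lambda)(a-\hat a)}=(1-\lambda)d$, the bracket belongs to $\lambda b+\lambda\alpha\mathbb{B}\subseteq\Gamma$ as soon as $(1-\lambda)d\le\lambda\alpha$, i.e. $\lambda\ge d/(\alpha+d)$. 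Taking $\lambda:=d/(\alpha+d)$ (which makes this inequality tight) and using that $\Gamma$ is closed under addition, I conclude $\mathbb{E}_{P_\lambda}[\Psi(\xi)]\in\Gamma$, so $P_\lambda\in\mathcal{P}$.

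It then remains to estimate $\mathbb{D}_W(Q,P_\lambda)$, where the Kantorovich--Rubinstein form in Definition \ref{Def4} is convenient: for every $\hbar\in\mathcal{H}$ one has $\mathbb{E}_Q[\hbar(\xi)]-\mathbb{E}_{P_\lambda}[\hbar(\xi)]=\lambda\bigl(\mathbb{E}_Q[\hbar(\xi)]-\mathbb{E}_{P_0}[\hbar(\xi)]\bigr)$, so taking the supremum over $\mathcal{H}$ gives exactly $\mathbb{D}_W(Q,P_\lambda)=\lambda\,\mathbb{D}_W(Q,P_0)$. Bounding $\mathbb{D}_W(Q,P_0)\le\Delta$ via \eqref{gs7} and using $\lambda=d/(\alpha+d)\le d/\alpha$, I obtain $\mathbb{D}_W(Q,\mathcal{P})\le\mathbb{D}_W(Q,P_\lambda)\le(\Delta/\alpha)\,d$, which is the assertion.

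The routine parts, namely the mixture estimate and the bound $\lambda\le d/\alpha$, are immediate; the step that carries the real content is the feasibility verification, where the convex-cone structure of $\Gamma$ together with the Slater margin $\alpha$ must be combined so as to absorb the projection error $(1-\lambda)(a-\hat a)$ into the ball $\lambda\alpha\mathbb{B}$. I expect this to be the only delicate point. In particular the argument essentially uses that $\Gamma$ is a \emph{cone} (not merely convex), so that scaling by $\lambda$ and by $(1-\lambda)$ preserves membership, and that $\Psi$ maps into a finite-dimensional vector/matrix space, so that the Euclidean projection $\hat a$ and the distance $d$ are well defined.
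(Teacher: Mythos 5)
Your proof is correct. Note that the paper itself contains no argument for this lemma: it is imported verbatim from \cite{LPX2019discrete} (Theorem 2 there), so the only meaningful comparison is with that reference, whose proof rests on exactly the mechanism you use --- a Robinson-type error bound under a Slater condition. Your construction $P_\lambda=(1-\lambda)Q+\lambda P_0$ with $\lambda=d/(\alpha+d)$ is the standard mixture argument: the feasibility step correctly exploits that $\Gamma$ is a closed \emph{convex cone} (so that $(1-\lambda)\hat a\in\Gamma$, $\lambda b+\lambda\alpha\mathbb{B}\subseteq\Gamma$, and sums of elements of $\Gamma$ stay in $\Gamma$), the identity $\mathbb{D}_W(Q,P_\lambda)=\lambda\,\mathbb{D}_W(Q,P_0)$ follows from linearity of $\mu\mapsto\mathbb{E}_\mu[\hbar(\xi)]$ in the Kantorovich--Rubinstein form, and $\lambda\le d/\alpha$ together with $\mathbb{D}_W(Q,P_0)\le\Delta$ closes the estimate. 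The degenerate case $d=0$ is handled, and the mixture $P_\lambda$ is indeed a probability distribution supported on $\Xi$ with $\mathbb{E}_{P_\lambda}[\Psi(\xi)]$ well defined, so there is no gap; your self-contained argument is, if anything, more informative than the paper's citation.
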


For a sample set $\Xi^k=\{\xi^1,\cdots,\xi^k\}$ and the resulting Voronoi tessellation $\Xi_1,\cdots,\Xi_k$, we call the following probability distribution
$$P_k:=\sum_{i=1}^k P(\Xi_i) \mathbf{1}_{\xi^i}$$
the Voronoi projection of $P$. Denote by
\begin{equation}
\label{beta}
\beta_k=\max_{\xi\in\Xi}\min_{1\leq i\leq k}\norm{\xi-\xi^i}
\end{equation}
the Hausdorff distance between $\Xi$ and $\Xi^k$.

The following lemma gives an estimation of Wasserstein distance between $P$ and its Voronoi projection.

\begin{lemma}[Lemma 4.9, \cite{PP2014multistage}]\label{Lem5}
Let $P\in\mathcal{P}(\Xi)$ and $P_k$ be the Voronoi projection of $P$. Then
$\mathbb{D}_W(P,P_k)\leq \beta_k$.
\end{lemma}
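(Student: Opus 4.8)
The plan is to bound the Wasserstein distance $\mathbb{D}_W(P,P_k)$ directly through Definition~\ref{Def4}, exploiting the fact that $P_k$ moves each unit of mass from its original location $\xi\in\Xi_i$ to the generator $\xi^i$ of the Voronoi cell containing it, and that by construction this relocation distance is at most $\beta_k$. First I would fix an arbitrary $\hbar\in\mathcal{H}$, i.e. a $1$-Lipschitz function on $\Xi$, and write out the difference of expectations explicitly. Since $P_k=\sum_{i=1}^k P(\Xi_i)\mathbf{1}_{\xi^i}$, we have $\mathbb{E}_{P_k}[\hbar(\xi)]=\sum_{i=1}^k P(\Xi_i)\hbar(\xi^i)$, while decomposing $\Xi$ along the (essentially disjoint) cells gives $\mathbb{E}_P[\hbar(\xi)]=\sum_{i=1}^k\int_{\Xi_i}\hbar(\xi)\,P(\d\xi)$.

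The key step is to subtract these two sums cell-by-cell and estimate the integrand. On the cell $\Xi_i$, the difference $\hbar(\xi)-\hbar(\xi^i)$ is controlled by the Lipschitz property: $\abs{\hbar(\xi)-\hbar(\xi^i)}\le\norm{\xi-\xi^i}$. But for $\xi\in\Xi_i$ the defining property of the Voronoi tessellation is precisely that $\norm{\xi-\xi^i}=\min_{1\le j\le k}\norm{\xi-\xi^j}$, and by the definition \eqref{beta} of $\beta_k$ this minimal distance is at most $\beta_k$ uniformly over $\xi\in\Xi$. Hence $\abs{\hbar(\xi)-\hbar(\xi^i)}\le\beta_k$ for every $\xi\in\Xi_i$. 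Assembling the pieces,
\begin{equation*}
\abs{\mathbb{E}_P[\hbar(\xi)]-\mathbb{E}_{P_k}[\hbar(\xi)]}
=\abs{\sum_{i=1}^k\int_{\Xi_i}\bigl(\hbar(\xi)-\hbar(\xi^i)\bigr)\,P(\d\xi)}
\le\sum_{i=1}^k\int_{\Xi_i}\beta_k\,P(\d\xi)=\beta_k,
\end{equation*}
using $\sum_{i=1}^k P(\Xi_i)=P(\Xi)=1$. Since the bound $\beta_k$ is independent of the chosen $\hbar$, taking the supremum over $\hbar\in\mathcal{H}$ yields $\mathbb{D}_W(P,P_k)\le\beta_k$, which is the claim.

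There is no serious obstacle here, as the statement is essentially a direct consequence of the Kantorovich--Rubinstein formulation of the Wasserstein metric combined with the defining property of the Voronoi cells. The only point requiring mild care is the measure-theoretic bookkeeping: one must ensure the decomposition $\mathbb{E}_P[\hbar(\xi)]=\sum_i\int_{\Xi_i}\hbar\,P(\d\xi)$ is valid despite the cells $\Xi_i$ overlapping on their (lower-dimensional) boundaries. This is harmless because the interiors satisfy $\mathrm{int}(\Xi_i)\cap\mathrm{int}(\Xi_j)=\emptyset$, so one may work with a measurable partition obtained by assigning each boundary point to a single cell (for instance the one of least index), which changes neither the expectation nor the relocation bound. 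With that convention fixed, the estimate above goes through verbatim.
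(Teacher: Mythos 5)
Your proof is correct. Note, however, that the paper does not prove this lemma at all: it is quoted directly from Pflug and Pichler \cite{PP2014multistage} (their Lemma 4.9), so there is no internal proof to compare against; your argument fills in a proof the paper outsources to the literature. Your route --- fixing a $1$-Lipschitz $\hbar$, decomposing $\mathbb{E}_P[\hbar(\xi)]$ over the Voronoi cells, and using $\abs{\hbar(\xi)-\hbar(\xi^i)}\le\norm{\xi-\xi^i}\le\beta_k$ on each cell --- is the natural one given that the paper adopts the Kantorovich--Rubinstein dual form as its \emph{definition} of the metric (Definition \ref{Def4}). The textbook proof instead works with the primal (coupling/transport) formulation: the map sending each $\xi\in\Xi_i$ to the generator $\xi^i$ pushes $P$ forward to $P_k$ and moves no mass farther than $\beta_k$, so the optimal transport cost is at most $\beta_k$; to translate that into the supremum-over-Lipschitz-functions form used here one must then invoke Kantorovich--Rubinstein duality. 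Your argument avoids duality altogether, which is a small but genuine advantage in this paper's setting. Your handling of the cell boundaries is also the right fix: the formula $P_k=\sum_{i=1}^k P(\Xi_i)\mathbf{1}_{\xi^i}$ only defines a probability measure once ties are broken by a measurable convention (e.g.\ least index), and the bound $\norm{\xi-\xi^i}\le\beta_k$ survives that convention because tied points are equidistant from their competing generators.
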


Then, we have an estimation of $\mathbb{D}_W(\mathcal{P},\mathcal{P}_k)$ from \cite{LPX2019discrete}.

\begin{proposition}[\cite{LPX2019discrete}]
\label{Prop3}
Suppose that: (i) Assumption \ref{Assu4} holds; (ii) $\Psi$ is Lipschitz continuous with Lipschitz modulus $L_\Psi$; (iii) the sample set $\Xi^k$ satisfies $\beta_k\rightarrow 0$ as $k\rightarrow \infty$. Then, for sufficiently large $k$, we have
$$\mathbb{D}_W(\mathcal{P},\mathcal{P}_k) \leq L\beta_k,$$
where $L= 1+ \frac{2\diam(\Xi) L_\Psi}{\alpha}$ and $\alpha$ is defined in Assumption \ref{Assu4}.
\end{proposition}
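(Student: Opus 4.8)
The plan is to bound the deviation distance straight from its definition, $\mathbb{D}_W(\mathcal{P},\mathcal{P}_k)=\sup_{P\in\mathcal{P}}\inf_{Q\in\mathcal{P}_k}\mathbb{D}_W(P,Q)$, by producing, for each fixed $P\in\mathcal{P}$, a single distribution $Q\in\mathcal{P}_k$ with $\mathbb{D}_W(P,Q)\le L\beta_k$ and a bound uniform in $P$. The natural candidate is the Voronoi projection $P_k=\sum_{i=1}^k P(\Xi_i)\mathbf{1}_{\xi^i}$, which is supported on $\Xi^k$ and, by Lemma \ref{Lem5}, satisfies $\mathbb{D}_W(P,P_k)\le\beta_k$. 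The obstruction is that $P_k$ need not belong to $\mathcal{P}_k$: although $\mathbb{E}_P[\Psi(\xi)]\in\Gamma$, the projection perturbs the moment and may push it out of the cone $\Gamma$.

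First I would quantify this perturbation. On each Voronoi cell $\Xi_i$ one has $\norm{\xi-\xi^i}=\min_{1\le j\le k}\norm{\xi-\xi^j}\le\beta_k$ by the definition \eqref{beta} of $\beta_k$, so the Lipschitz continuity of $\Psi$ gives
$$\norm{\mathbb{E}_{P_k}[\Psi(\xi)]-\mathbb{E}_P[\Psi(\xi)]}\le\sum_{i=1}^k\int_{\Xi_i}L_\Psi\norm{\xi^i-\xi}\,\d P(\xi)\le L_\Psi\beta_k ,$$
and since $\mathbb{E}_P[\Psi(\xi)]\in\Gamma$ this yields $\d(\mathbb{E}_{P_k}[\Psi(\xi)],\Gamma)\le L_\Psi\beta_k$. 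Applying the same estimate to the Slater point $P_0$ of Assumption \ref{Assu4}, its Voronoi projection $P_{0,k}$ is again supported on $\Xi^k$ and retains an approximate Slater property, namely $\mathbb{E}_{P_{0,k}}[\Psi(\xi)]+(\alpha-L_\Psi\beta_k)\mathbb{B}\subseteq\Gamma$, which is nonvacuous once $k$ is large enough that $L_\Psi\beta_k<\alpha/2$.

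Feasibility is then restored by a convex combination that stays supported on $\Xi^k$. Setting $Q_\lambda=(1-\lambda)P_k+\lambda P_{0,k}$ and writing $g=\mathbb{E}_{P_k}[\Psi(\xi)]$, $g_\Gamma=\proj_\Gamma(g)$, I would use the decomposition $\mathbb{E}_{Q_\lambda}[\Psi(\xi)]=(1-\lambda)g_\Gamma+\lambda(\mathbb{E}_{P_{0,k}}[\Psi(\xi)]+\tfrac{1-\lambda}{\lambda}(g-g_\Gamma))$. Because $\Gamma$ is a convex cone, both summands lie in $\Gamma$ as soon as $\tfrac{1-\lambda}{\lambda}\norm{g-g_\Gamma}\le\alpha-L_\Psi\beta_k$; since $\norm{g-g_\Gamma}\le L_\Psi\beta_k$ and $\alpha-L_\Psi\beta_k\ge\alpha/2$ for large $k$, the smallest admissible weight is bounded by $\lambda\le 2L_\Psi\beta_k/\alpha$, so $Q_\lambda\in\mathcal{P}_k$. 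The triangle inequality together with the trivial bound $\mathbb{D}_W(P_k,P_{0,k})\le\diam(\Xi)$ (both measures are supported on $\Xi$) then gives $\mathbb{D}_W(P,Q_\lambda)\le\mathbb{D}_W(P,P_k)+\lambda\,\mathbb{D}_W(P_k,P_{0,k})\le\beta_k+\tfrac{2L_\Psi\beta_k}{\alpha}\diam(\Xi)=L\beta_k$, and taking the supremum over $P\in\mathcal{P}$ closes the argument.

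The main obstacle, around which the whole construction is designed, is the simultaneous demand that the approximating measure be discrete on $\Xi^k$ and satisfy the moment constraint $\mathbb{E}[\Psi]\in\Gamma$. Lemma \ref{Th1} on its own restores feasibility only by mixing in the continuous Slater point $P_0$, which destroys discreteness; conversely the Voronoi projection preserves discreteness but breaks feasibility. Reconciling the two forces one to project $P_0$ \emph{first} and mix afterwards, and it is precisely here that the hypothesis $\beta_k\to0$ is used: it keeps the shrunken Slater radius $\alpha-L_\Psi\beta_k$ positive and makes the mixing weight $\lambda$ small enough that the additional transport cost remains $O(\beta_k)$.
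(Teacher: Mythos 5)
Your proof is correct, and the constant it produces, $\beta_k + \tfrac{2L_\Psi\beta_k}{\alpha}\diam(\Xi)$, matches $L\beta_k$ exactly. The paper itself gives no proof of this proposition (it is quoted from \cite{LPX2019discrete}), and your argument is essentially the one behind that citation: Voronoi projection (Lemma \ref{Lem5}) to get discreteness, followed by the Hoffman-type mixing mechanism that underlies Lemma \ref{Th1}, applied with the \emph{projected} Slater point so that feasibility is restored without leaving the support $\Xi^k$. Your observation that Lemma \ref{Th1} cannot be invoked as a black box --- because it bounds the distance to $\mathcal{P}$ rather than to $\mathcal{P}_k$, and mixing with the continuous $P_0$ would destroy discreteness --- is precisely the point that forces the inline reconstruction, and you handle it correctly.
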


In the remaining paper, we tacitly assume that both $\mathcal{P}$ and $\mathcal{P}_k$ are nonempty.

\section{Optimal values and solutions of  problems (\ref{DREC}) and (\ref{DREC-1})}\label{Sec3}

Problem (\ref{DREC}) can be rewritten as
\begin{equation*}
\min_{(x,y)\in {\cal D}}\max_{P\in {\mathcal P}}  ~g(x, y, P):=\theta(x) + h(\mathbb{E}_P[f(x,y(\xi),\xi)]),
\end{equation*}
where  $ {\mathcal D}=\{ (x, y) :  x\in X,  y(\xi) \in {\rm SOL}(M(\xi),  q(x,\xi)),  \, \xi \in \Xi\}.$

Note that the function $g(\cdot,\cdot, P)$ is not convex for a fixed $P\in {\mathcal P}$,  and $g(x,y,\cdot)$ is not concave for a fixed tuple $(x, y)\in {\mathcal D}$.
This usually implies that
$$ \min_{(x,y) \in {\mathcal D}}\max_{P\in {\mathcal P}}  g(x, y, P) \neq
\max_{P\in {\mathcal P}} \min_{(x,y)\in {\mathcal D}} g(x, y, P).
$$
By the saddle point existence theorem \cite{Debreu}, $g$ does not have a saddle point $(x^*, y^*, P^*)\in {\mathcal D}\times{\mathcal P} $ that satisfies
$$ g(x^*, y^*, P) \le g(x^*, y^*, P^*) \le g(x, y, P^*)$$
for any $(x, y, P)\in {\mathcal D}\times{\mathcal P}$.
Hence, we define global and local minimax points of problem \eqref{DREC} by using the idea in \cite{JNJ2020local}.

\begin{definition}\label{minimax}
 We call $(x^*, y^*,P^*) \in {\mathcal D}\times{\mathcal P}$ a \textbf{global minimax point} of problem \eqref{DREC}, if it satisfies
\begin{align*}
g(x^*, y^*, P) \le g(x^*, y^*, P^*) \leq \max_{Q\in \mathcal{P}}g(x,y,Q)
\end{align*}
for any $(x, y)\in \mathcal{D}$ and $P\in \mathcal{P}$.

 We call $(x^*, y^*,P^*) \in {\mathcal D}\times{\mathcal P}$ a \textbf{local minimax point} of problem \eqref{DREC}, if there exist a $\delta_0>0$ and a function $\varsigma:\mathbb{R}_+\rightarrow \mathbb{R}_+$ satisfying $\varsigma(\delta)\rightarrow 0$ as $\delta\rightarrow 0$, such that
\begin{align*}
g(x^*, y^*, P) \le g(x^*, y^*, P^*) \leq \max_{Q\in \mathcal{P},\mathbb{D}_W(Q,P^*)\leq \varsigma(\delta)}g(x,y,Q),
\end{align*}
for any $\delta\in (0,\delta_0]$, $(x, y)\in \mathcal{D}$
satisfying $\norm{x-x^*}\leq \delta$ and $\norm{y-y^*}_{\mathcal{L}_2} \leq \delta$ and $P\in \mathcal{P}$ satisfying $\mathbb{D}_W(P,P^*)\leq \delta$.
\end{definition}
Similarly, we can define global minimax points and local minimax points of problems \eqref{R-DREC}, \eqref{DDREC}
and \eqref{DREC-1}, respectively.

Using the notation $\mathcal{D}$, problem \eqref{DREC} regarding minimizers can be rewritten as
\begin{equation}
\label{DREC-Ref}
\min_{(x,y)\in {\cal D}} \Phi(x,y).
\end{equation}
By substituting the unique solution of LCP$(M(\xi)+\epsilon I, q(x,\xi))$ (denoted by $\hat{y}_\epsilon(x,\xi)$) into the objective function, problem \eqref{R-DREC} can be rewritten as
\begin{equation}
\label{RDREC}
\min\limits_{x\in X}~ \Phi_\epsilon (x,\hat{y}_\epsilon(x,\cdot)).
\end{equation}
Then its discrete approximation problem \eqref{DREC-1} can be rewritten as
\begin{equation}
\label{SSA-RDREC}
\min_{x\in X}~ \Phi_{\epsilon,k} (x,\hat{\mathbf{y}}_\epsilon(x)),
\end{equation}
where $\hat{\mathbf{y}}_\epsilon(x):=(\hat{y}_\epsilon(x,\xi^1)^\top,\cdots,\hat{y}_\epsilon(x,\xi^k)^\top)^\top$.

Denote by $\mathcal{D}^*$, $X_\epsilon^*$, $X_{\epsilon,k}^*$, and $v^*$, $v_\epsilon^*$, $v_{\epsilon,k}^*$ the sets of minimizers and optimal values of problems \eqref{DREC-Ref}, \eqref{RDREC} and \eqref{SSA-RDREC}, respectively. Let $X^*=\proj_x \mathcal{D}^*$ be the projection of $\mathcal{D}^*$ onto $x$-space.
In this section, we provide conditions for the existence of minimizers of problems \eqref{DREC}, \eqref{R-DREC} and \eqref{DREC-1}, and prove the convergence of optimal values and optimal solution sets of problem \eqref{DREC-1} to those of problem \eqref{DREC} as $\epsilon\downarrow0$ and $k\rightarrow \infty$.  The convergence analysis is divided  into two parts: the convergence of \eqref{R-DREC} to \eqref{DREC} as $\epsilon\downarrow0$ and the convergence of  \eqref{DREC-1} to \eqref{R-DREC} as $k\rightarrow \infty$  for a fixed $\epsilon>0$.

\subsection{Existence of solutions}
In this subsection, we provide some sufficient conditions for the existence of global minimax points of problems \eqref{DREC}, \eqref{R-DREC} and \eqref{DREC-1}, respectively.

Let $L_f(\xi^i)$ be the Lipschitz modulus of $f(\cdot,\cdot,\xi^i)$ and $L_\epsilon(\xi^i)$ be the Lipschitz modulus of $\hat{y}_\epsilon(\cdot,\xi^i)$.
For a fixed $\epsilon>0$ and $k>0$, we have
\begin{align*}
&~~~~\abs{ \max_{p\in\mathcal{P}_k}h( F(x,\hat{\mathbf{y}}_\epsilon(x))p ) - \max_{p\in\mathcal{P}_k}h( F(x',\hat{\mathbf{y}}_\epsilon(x'))p ) } \\
& \leq  \max_{p\in\mathcal{P}_k} \abs{ h( F(x,\hat{\mathbf{y}}_\epsilon(x))p ) - h( F(x',\hat{\mathbf{y}}_\epsilon(x'))p ) } \\
& \leq  L_h \max_{p\in\mathcal{P}_k} \norm{ \sum_{i=1}^k p_i f(x,\hat{y}_\epsilon(x,\xi^i),\xi^i) - \sum_{i=1}^k p_i f(x',\hat{y}_\epsilon(x',\xi^i),\xi^i) } \\
& \leq  L_h \max_{p\in\mathcal{P}_k} \sum_{i=1}^k p_i \norm{ f(x,\hat{y}_\epsilon(x,\xi^i),\xi^i) - f(x',\hat{y}_\epsilon(x',\xi^i),\xi^i) } \\
& \leq  L_h \max_{p\in\mathcal{P}_k} \sum_{i=1}^k p_i L_f(\xi^i)(\norm{x-x'} + L_\epsilon(\xi^i)\norm{x-x'} )\\
& \leq  L_h  \max_{1\leq i\leq k}L_f(\xi^i)\left( 1 + \max_{1\leq i\leq k} L_\epsilon(\xi^i) \right) \norm{x-x'}
\end{align*}
 for any $x,x'\in X$. Therefore, $\Phi_{\epsilon,k} (x,\hat{\mathbf{y}}_\epsilon(x))$  is Lipschitz continuous over the compact and convex set $X$, and  it has a minimizer $x^* \in X$.  Moreover,
 $\mathcal{P}_k$ is closed and bounded due to the continuity of $\Psi$ and $h( F(x^*,\hat{\mathbf{y}}_\epsilon(x^*))p )$ is continuous w.r.t. $p$. Hence, there is a maximizer $p^*$ such that $(x^*,p^*)$ is a global minimax point of problem \eqref{DREC-1}.

In what follows, we provide sufficient conditions for the existence of solutions to problems
\eqref{DREC} and \eqref{R-DREC}.

Recall that a sequentially compact set means that any sequence contained in this set has a convergent subsequence. We say a sequence of probability distribution $\{P_j\}\subseteq \mathcal{P}(\Xi)$ weakly converges to $P\in \mathcal{P}(\Xi)$ if $\mathbb{E}_{P_j}[g(\xi)]\rightarrow \mathbb{E}_{P}[g(\xi)]$ as $j\rightarrow \infty$ for any continuous and bounded function $g:\Xi\rightarrow \mathbb{R}$. A subset of $\mathcal{P}(\Xi)$ is weakly compact if any sequence in this subset has a weak convergent subsequence with a weak limit point contained in this subset.

\begin{assumption}\label{Sol}~
\begin{enumerate}[(i)]
\item There exists $\kappa:\Xi\rightarrow \mathbb{R}_+$ with $\max_{P\in\mathcal{P}} \mathbb{E}_P[\kappa(\xi)^2]< \infty$ such that
$$ \max_{(x,y) \in {\cal D}}  \norm{y(\xi)} < \kappa(\xi), \quad \xi\in \Xi.$$

\item There exists a tuple $(x_0, y_0) \in {\mathcal D}$  such that $\Phi(x_0,y_0)<\infty$.
\end{enumerate}

\end{assumption}

\begin{theorem}\label{Th-E}
Under Assumption \ref{Sol}, if $\mathcal{D}$ is a sequentially compact set, then the objective function of problem \eqref{DREC} satisfies $|\Phi(x,y)| <\infty$ for any $(x, y) \in {\cal D}$ and  problem \eqref{DREC} has a minimizer $(x^*,y^*)$.
\end{theorem}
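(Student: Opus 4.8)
The plan is to establish the two assertions in turn: finiteness of $\Phi$ on $\mathcal{D}$, and attainment of the infimum by the direct method. For finiteness, the idea is to anchor every $(x,y)\in\mathcal{D}$ at the reference tuple $(x_0,y_0)$ from Assumption \ref{Sol}(ii). Since $\theta$ is Lipschitz on the compact set $X$, the term $\theta(x)$ is bounded, so everything reduces to controlling $\sup_{P\in\mathcal{P}} h(\mathbb{E}_P[f(x,y(\xi),\xi)])$. Using the Lipschitz continuity of $h$ and $f(\cdot,\cdot,\xi)$ together with $\|y(\xi)\|<\kappa(\xi)$ and $\|y_0(\xi)\|<\kappa(\xi)$ from Assumption \ref{Sol}(i), I would bound, for each $P\in\mathcal{P}$,
\begin{align*}
h(\mathbb{E}_P[f(x,y(\xi),\xi)]) &\le h(\mathbb{E}_P[f(x_0,y_0(\xi),\xi)]) + L_h\,\mathbb{E}_P[\|f(x,y(\xi),\xi)-f(x_0,y_0(\xi),\xi)\|]\\
&\le h(\mathbb{E}_P[f(x_0,y_0(\xi),\xi)]) + L_h\,\mathbb{E}_P[L_f(\xi)(\diam(X)+2\kappa(\xi))].
\end{align*}
Taking the supremum over $P$, the first term equals $\Phi(x_0,y_0)-\theta(x_0)<\infty$ by Assumption \ref{Sol}(ii), while the second is finite uniformly in $P$ because $\mathbb{E}_P[L_f(\xi)]\le\sqrt{\mathbb{E}_P[L_f^2(\xi)]}$ and $\mathbb{E}_P[L_f(\xi)\kappa(\xi)]\le\sqrt{\mathbb{E}_P[L_f^2(\xi)]}\sqrt{\mathbb{E}_P[\kappa^2(\xi)]}$ are both controlled by the standing bound $\max_{P\in\mathcal{P}}\mathbb{E}_P[L_f^2(\xi)]<\infty$ and by Assumption \ref{Sol}(i). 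This yields $\Phi(x,y)<\infty$; the lower bound $\Phi(x,y)>-\infty$ follows from $\Phi(x,y)\ge\theta(x)+h(\mathbb{E}_{P_1}[f(x,y(\xi),\xi)])$ for any fixed $P_1\in\mathcal{P}$ together with the same envelope estimate and $\min_X\theta>-\infty$.

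For existence I would use the direct method. By the above, $v^*:=\inf_{(x,y)\in\mathcal{D}}\Phi(x,y)\in(-\infty,+\infty)$, and Assumption \ref{Sol}(ii) gives a feasible point, so a minimizing sequence $\{(x_j,y_j)\}\subseteq\mathcal{D}$ with $\Phi(x_j,y_j)\to v^*$ exists. Using sequential compactness of $\mathcal{D}$, I extract a subsequence converging to some $(x^*,y^*)\in\mathcal{D}$ (so $x_j\to x^*$ and $y_j\to y^*$ in $\mathcal{L}_2$), and passing to a further subsequence I may assume $y_j(\xi)\to y^*(\xi)$ for a.e.\ $\xi$. The conclusion follows from lower semicontinuity of $\Phi$ along this sequence. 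Writing $\Phi=\theta+\sup_{P\in\mathcal{P}}\phi_P$ with $\phi_P(x,y):=h(\mathbb{E}_P[f(x,y(\xi),\xi)])$, it suffices to show each $\phi_P$ is continuous and invoke that a pointwise supremum of continuous functions is lower semicontinuous. Continuity of $\phi_P$ reduces, via
$$|\phi_P(x_j,y_j)-\phi_P(x^*,y^*)|\le L_h\,\mathbb{E}_P[\|f(x_j,y_j(\xi),\xi)-f(x^*,y^*(\xi),\xi)\|],$$
to showing $\mathbb{E}_P[f(x_j,y_j(\xi),\xi)]\to\mathbb{E}_P[f(x^*,y^*(\xi),\xi)]$, which I obtain by dominated convergence: the integrands converge a.e.\ by continuity of $f(\cdot,\cdot,\xi)$ and are dominated by the $P$-integrable envelope $\|f(x_0,y_0(\xi),\xi)\|+L_f(\xi)(\diam(X)+2\kappa(\xi))$ supplied by Assumption \ref{Sol}(i) and the relatively complete recourse condition. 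Finally, for any fixed $Q\in\mathcal{P}$,
$$\liminf_{j\to\infty}\sup_{P\in\mathcal{P}}\phi_P(x_j,y_j)\ge\liminf_{j\to\infty}\phi_Q(x_j,y_j)=\phi_Q(x^*,y^*),$$
and taking the supremum over $Q$ together with $\theta(x_j)\to\theta(x^*)$ gives $\liminf_j\Phi(x_j,y_j)\ge\Phi(x^*,y^*)$, whence $\Phi(x^*,y^*)\le v^*$ and $(x^*,y^*)$ is a minimizer.

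The step I expect to be the main obstacle is precisely this lower semicontinuity, i.e.\ the interchange of the limit with both the supremum over $\mathcal{P}$ and the expectation $\mathbb{E}_P$. The delicate points are, first, ensuring the $\mathcal{L}_2$-convergence furnished by sequential compactness of $\mathcal{D}$ is strong enough to yield a.e.\ convergence of $y_j$ compatible with every $P\in\mathcal{P}$, so that dominated convergence applies for each fixed $P$; and second, controlling the family $\{\phi_P\}_{P\in\mathcal{P}}$ uniformly enough to pass the $\liminf$ through the supremum. Both are exactly what the square-integrable dominating function $\kappa$ and the uniform moment condition $\max_{P\in\mathcal{P}}\mathbb{E}_P[L_f^2(\xi)]<\infty$ are designed to provide. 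By contrast, the finiteness claim and the abstract direct-method skeleton are routine once these envelope estimates are in hand.
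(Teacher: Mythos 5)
Your overall skeleton (finiteness by anchoring at $(x_0,y_0)$, then the direct method with lower semicontinuity plus sequential compactness) matches the paper, and your finiteness argument is essentially identical to the paper's. The gap is in the lower semicontinuity step. You obtain continuity of each $\phi_P(x,y)=h(\mathbb{E}_P[f(x,y(\xi),\xi)])$ along the minimizing sequence by extracting a further subsequence with $y_j(\xi)\to y^*(\xi)$ almost everywhere and then applying dominated convergence under $P$. But ``almost everywhere'' here is with respect to whatever single reference measure underlies the $\mathcal{L}_2$-convergence furnished by sequential compactness of $\mathcal{D}$ (Lebesgue measure, in the paper's notation), whereas dominated convergence under $P$ requires $P$-a.e.\ convergence. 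The moment-based ambiguity set \eqref{AS} contains measures that are singular with respect to any such fixed reference measure --- in particular every discrete distribution in $\mathcal{P}_k\subseteq\mathcal{P}$, which the paper relies on throughout. For such $P$, Lebesgue-a.e.\ convergence of $y_j$ says nothing about convergence at the atoms of $P$; indeed $\phi_P$ is not even well defined on Lebesgue-$\mathcal{L}_2$ equivalence classes. So the step you yourself flag as the delicate one does fail, and your claim that the envelope $\kappa$ and the bound $\max_{P\in\mathcal{P}}\mathbb{E}_P[L_f^2(\xi)]<\infty$ ``are designed to provide'' it is not right: those hypotheses supply domination and integrability, not compatibility of a.e.\ convergence with every $P\in\mathcal{P}$.

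The paper's proof avoids this issue entirely: instead of pointwise convergence plus domination, it applies the Cauchy--Schwarz (H\"older) inequality to get
\[
\abs{\phi_P(x,y')-\phi_P(x,y)}\le L_h\left(\mathbb{E}_P[L_f^2(\xi)]\right)^{\frac{1}{2}}\left(\mathbb{E}_P[\norm{y'(\xi)-y(\xi)}^2]\right)^{\frac{1}{2}},
\]
a modulus of continuity that is uniform over $P\in\mathcal{P}$ thanks to the standing assumption $\max_{P\in\mathcal{P}}\mathbb{E}_P[L_f^2(\xi)]<\infty$. Measuring closeness of the $y$'s by $\sup_{P\in\mathcal{P}}\mathbb{E}_P[\norm{y'(\xi)-y(\xi)}^2]$ --- the same quantity the paper uses to define $\mathcal{M}_\epsilon(x)$ in \eqref{gs11} --- the family $\{\phi_P\}_{P\in\mathcal{P}}$ is then equi-Lipschitz, so the supremum over $P$ is lsc (in fact Lipschitz) with no interchange of limits, expectations or suprema required. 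If you replace your dominated-convergence step with this estimate, your direct-method argument goes through and coincides with the paper's proof.
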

\begin{proof}
By Assumption \ref{Sol}, we have
\begin{align*}
&|\Phi(x,y)-\Phi(x_0,y_0)| \\
& \le | \theta(x)-\theta(x_0)| +
 \max_{P\in\mathcal{P}}\abs{ \mathbb{E}_P[\norm{f(x,y(\xi),\xi)}] - \mathbb{E}_P[\norm{f(x_0,y_0(\xi),\xi)}]}\\
&\leq L_\theta\diam(X)  + L_h\max_{P\in\mathcal{P}}\mathbb{E}_P\left[ L_f(\xi)(\diam(X) + 2\kappa(\xi))\right]<\infty.
\end{align*}
Hence $|\Phi(x,y)| <\infty$ for any $(x, y) \in {\cal D}$, which means there is $v^*\in \mathbb{R}$ such that $v^*=\inf_{(x, y) \in {\cal D} } \Phi(x,y)$.
To prove that problem \eqref{DREC} has a minimizer $(x^*,y^*)$ under Assumption \ref{Sol}, we only need to show that $\Phi$ is lower semicontinuous (lsc) due to the sequential compactness of $\mathcal{D}$ \cite[Theorem 1.9]{RW2009variational}.

First, we show that for a fixed $x\in X$, $\max_{P\in {\cal P}} h(\mathbb{E}_P[f(x,\cdot,\xi)])$ is lsc.   By the Lipschitz continuity of $h$ and $f$, we know that
\begin{align*}
& \abs{ h(\mathbb{E}_P[f(x,y'(\xi),\xi)]) - h(\mathbb{E}_P[f(x,y(\xi),\xi)]) } \\
 &\leq L_h \mathbb{E}_P\left[L_f(\xi) \norm{y'(\xi) - y(\xi)}\right] \\
 &\leq L_h \left(\mathbb{E}_P[L_f^2(\xi)]\right)^{\frac{1}{2}}
 \left(\mathbb{E}_P[\norm{y'(\xi)-y(\xi)}^2]\right)^{\frac{1}{2}} \rightarrow0
\end{align*}
if $\left(\mathbb{E}_P[\norm{y'(\xi)-y(\xi)}^2]\right)^{\frac{1}{2}} \rightarrow0.$
Hence $h(\mathbb{E}_P[f(x,\cdot,\xi)])$ is continuous for fixed $x\in X$ and $P\in {\cal P}$, which implies that $\max_{P\in\mathcal{P}} h(\mathbb{E}_P[f(x,\cdot,\xi)])$ is lsc.

Similarly, we can show that $\max_{P\in {\cal P}} h(\mathbb{E}_P[f(\cdot,\cdot,\xi)])$ is lsc  over $\mathcal{D}$. Since $\theta$ is a Lipchitz continuous function of $x$, we derive that  $\Phi$ is lsc.
\qed
\end{proof}

\begin{theorem}\label{Th-E2}
Suppose that $\Xi$ is bounded.
Then for any $\epsilon>0$, problem \eqref{R-DREC} has a minimizer $(x^*,\hat{y}_\epsilon(x^*,\cdot))$. In addition, if  $\mathcal{P}$ is weakly compact,  then problem \eqref{R-DREC} has a global minimax point $(x^*,\hat{y}_\epsilon(x^*,\cdot),P^*)$.
\end{theorem}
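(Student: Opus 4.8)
The plan is to dispatch the two assertions separately, in each case working with the reduced, single-variable form \eqref{RDREC} obtained by substituting out the unique regularized LCP solution $\hat{y}_\epsilon(x,\cdot)$.

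\textbf{A minimizer.} I would first show that the reduced objective $x\mapsto \Phi_\epsilon(x,\hat{y}_\epsilon(x,\cdot))$ is Lipschitz continuous on $X$ and then apply the Weierstrass theorem, since $X$ is compact. The needed estimate repeats the chain of inequalities displayed above for \eqref{DREC-1}, but in integral form: for $x,x'\in X$,
\begin{align*}
&\abs{\max_{P\in\mathcal{P}}h(\mathbb{E}_P[f(x,\hat{y}_\epsilon(x,\xi),\xi)]) - \max_{P\in\mathcal{P}}h(\mathbb{E}_P[f(x',\hat{y}_\epsilon(x',\xi),\xi)])}\\
&\quad\leq L_h\max_{P\in\mathcal{P}}\mathbb{E}_P\left[L_f(\xi)(1+L_\epsilon(\xi))\right]\norm{x-x'},
\end{align*}
by the Lipschitz continuity of $h$, of $f(\cdot,\cdot,\xi)$, and of $\hat{y}_\epsilon(\cdot,\xi)$. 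The coefficient is finite because the $1/\epsilon$-Lipschitz dependence of a strongly monotone LCP solution on its constant vector, together with the Lipschitz modulus $L_q$ of $q(\cdot,\xi)$, gives the uniform bound $L_\epsilon(\xi)\le L_q/\epsilon$, while $\max_{P\in\mathcal{P}}\mathbb{E}_P[L_f(\xi)]\le (\max_{P\in\mathcal{P}}\mathbb{E}_P[L_f^2(\xi)])^{1/2}<\infty$ by Cauchy--Schwarz. Adding the Lipschitz term $\theta(x)$ shows $\Phi_\epsilon(\cdot,\hat{y}_\epsilon(\cdot,\cdot))$ is Lipschitz, hence attains its minimum at some $x^*\in X$; this yields the minimizer $(x^*,\hat{y}_\epsilon(x^*,\cdot))$.

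\textbf{A global minimax point.} Unwinding the analogue of Definition \ref{minimax} for problem \eqref{R-DREC}, the triple $(x^*,\hat{y}_\epsilon(x^*,\cdot),P^*)$ is a global minimax point precisely when two inequalities hold: the outer one, $g(x^*,\hat{y}_\epsilon(x^*,\cdot),P^*)\le \max_{Q\in\mathcal{P}}g(x,\hat{y}_\epsilon(x,\cdot),Q)=\Phi_\epsilon(x,\hat{y}_\epsilon(x,\cdot))$ for all $x\in X$, and the inner one, $g(x^*,\hat{y}_\epsilon(x^*,\cdot),P)\le g(x^*,\hat{y}_\epsilon(x^*,\cdot),P^*)$ for all $P\in\mathcal{P}$. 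Once $P^*$ is chosen to attain the inner supremum, its left side equals the optimal value $\Phi_\epsilon(x^*,\hat{y}_\epsilon(x^*,\cdot))$, so the outer inequality is immediate from the minimality of $x^*$ just proved; and the inner inequality is the very statement that $P^*$ is an inner maximizer. Thus the whole task collapses to showing that $P\mapsto h(\mathbb{E}_P[f(x^*,\hat{y}_\epsilon(x^*,\xi),\xi)])$ attains its maximum over $\mathcal{P}$.

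\textbf{Attaining the inner maximum.} Here I would take a maximizing sequence $\{P_j\}\subseteq\mathcal{P}$, extract by weak compactness a subsequence $P_j\rightharpoonup P^*\in\mathcal{P}$, and pass to the limit. The integrand $\phi(\xi):=f(x^*,\hat{y}_\epsilon(x^*,\xi),\xi)$ is continuous in $\xi$, since the strongly monotone LCP solution depends continuously on its data $(M(\xi),q(x^*,\xi))$ and $f(x^*,\cdot,\cdot)$ is continuous; moreover $\phi$ is bounded over the bounded set $\Xi$, because $q(x^*,\cdot)$ is bounded there and the strong-monotonicity estimate $\norm{\hat{y}_\epsilon(x^*,\xi)}\le \norm{q(x^*,\xi)}/\epsilon$ confines $\hat{y}_\epsilon(x^*,\cdot)$ to a bounded set. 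Weak convergence then gives $\mathbb{E}_{P_j}[\phi]\to\mathbb{E}_{P^*}[\phi]$, and continuity of $h$ gives $h(\mathbb{E}_{P_j}[\phi])\to h(\mathbb{E}_{P^*}[\phi])$, so $P^*$ attains the maximum. I expect the main obstacle to be exactly this last step: verifying the joint continuity and the uniform-in-$\xi$ boundedness (or, if $\Xi$ fails to be closed, the uniform integrability guaranteed by $\max_{P\in\mathcal{P}}\mathbb{E}_P[L_f^2(\xi)]<\infty$) of the composite integrand $\phi$, which is what licenses carrying the weak limit through the expectation and through $h$.
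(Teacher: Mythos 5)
Your proposal is correct, and its second half coincides with the paper's own argument; the first half takes a genuinely more direct route. For the existence of a minimizer, the paper verifies that Assumption \ref{Sol} holds for $\Phi_\epsilon$ over the regularized feasible set ${\cal D}_\epsilon$ (using an LCP error bound to get the uniform bound $\|\hat{y}_\epsilon(x,\xi)\|\le\Lambda_\epsilon$) and then re-runs the function-space argument of Theorem \ref{Th-E}: lower semicontinuity of the objective plus sequential compactness of the feasible set. You instead collapse the problem to the finite-dimensional reduction \eqref{RDREC} over the compact set $X$, prove the reduced objective is Lipschitz via $L_\epsilon(\xi)\le L_q/\epsilon$ (the $1/\epsilon$-Lipschitz dependence of the strongly monotone LCP solution on $q$) and Cauchy--Schwarz against $\max_{P\in\mathcal{P}}\mathbb{E}_P[L_f^2(\xi)]<\infty$, and invoke Weierstrass. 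This is exactly the argument the paper itself uses for the discrete problem \eqref{DREC-1} in Section \ref{Sec3}, transplanted to integral form; it buys you two things: you avoid having to check sequential compactness of ${\cal D}_\epsilon$ in the $\mathcal{L}_2$ sense (a point the paper leaves implicit when it says ``using a similar proof of Theorem \ref{Th-E}''), and the whole existence argument stays in $\mathbb{R}^n$. For the global minimax point, both you and the paper reduce the matter to attainment of the inner maximum over the weakly compact $\mathcal{P}$, using continuity and boundedness of $\xi\mapsto f(x^*,\hat{y}_\epsilon(x^*,\xi),\xi)$ on the bounded set $\Xi$; you add the worthwhile explicit verification (which the paper compresses into ``Hence'') that inner-max attainment together with minimality of $x^*$ delivers both inequalities in Definition \ref{minimax}. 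One shared caveat: both arguments need continuity of $f$ in $\xi$, which the paper's standing assumptions state only for $f(\cdot,\cdot,\xi)$; you rightly flag this joint-continuity requirement as the delicate point rather than sweeping it under the rug.
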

\begin{proof}
For the existence of a minimizer $(x^*,\hat{y}_\epsilon(x^*,\cdot))$, we only need to show Assumption \ref{Sol} holds with $\Phi_\epsilon$ and
$${\cal D}_\epsilon=\{(x,y)  :  x\in X,\,\,  y(\xi) \in {\rm SOL}(M(\xi)+\epsilon I,  q(x,\xi)),  \, \xi \in \Xi\}$$
for any $\epsilon>0$.

Since $M(\xi)+\epsilon I$ is a positive definite matrix for any $\xi \in \Xi$, the solution set SOL$(M(\xi)+\epsilon I, q(x,\xi))$ has a unique vector $ \hat{y}_\epsilon(x, \xi)$ for any $x \in X$ and $\xi \in \Xi$.  Moreover, from Lipschitz continuity of $q$ and $M$ and boundness of $X$ and $\Xi$, there is a positive number $\Lambda_\epsilon >0$ such that
$$\Lambda_\epsilon \geq  \max_{x\in X, \xi \in \Xi} \max_{d\in [0,1]^n} \|(I-D+D(M(\xi)+\epsilon I))^{-1}\|\|\min(0, q(x,\xi))\|,$$
which implies $ \|\hat{y}_\epsilon(x, \xi)\|\le \Lambda_\epsilon $, for $x\in X, \xi \in \Xi$.
See error bounds for the LCP in \cite{CX2008perturbation,CPS1992linear}.
Hence, Assumption \ref{Sol} holds with $\Phi_\epsilon$ and
${\cal D}_\epsilon$.   Using a similar proof of Theorem \ref{Th-E}, we can show that problem \eqref{R-DREC} has a minimizer $(x^*,\hat{y}_\epsilon(x^*,\cdot))$.

Moreover, if $\mathcal{P}$ is weakly compact, due to the continuity and boundedness of $f(x^*,\hat{y}_\epsilon(x^*,\xi),\xi)$ w.r.t. $\xi$, the following maximization problem
\begin{equation*}
\max_{P\in\mathcal{P}}h(\mathbb{E}_P[f(x^*,\hat{y}_\epsilon(x^*,\xi),\xi)])
\end{equation*}
has a maximizer $P^*$. Hence $(x^*,\hat{y}_\epsilon(x^*,\cdot),P^*)$ is a global minimax point of problem \eqref{R-DREC}.
\qed
\end{proof}

\subsection{Convergence analysis between problems \eqref{DREC} and  \eqref{R-DREC}}

We use that LCP$(M(\xi)+\epsilon I, q(x,\xi))$ has a unique solution $ \hat{y}_\epsilon(x, \xi)$ to introduce the following auxiliary problem with fixed $\epsilon>0$ and $r_\epsilon >0$,
\begin{equation}\label{gs11}
\min\,  \Phi(x,y) \quad \mathrm{s.t.} \, x\in X,\,  y \in  \mathcal{M}_\epsilon(x):=\left\{y: \sup\limits_{P\in\mathcal{P}}\mathbb{E}_P[\norm{\hat{y}_\epsilon(x,\xi) - y(\xi)}^2]\leq r_\epsilon\right\}.
\end{equation}
Since $\hat{y}_\epsilon(x,\cdot)$ is a measurable function for any $x\in X$ and $X$ is bounded, $y\in \mathcal{M}_\epsilon(x) $ is a measurable function for any $x\in X$. By assumptions of Theorems \ref{Th-E}-\ref{Th-E2}, problems \eqref{DREC}, \eqref{R-DREC} and
 \eqref{gs11} have minimizers.  Denote by $\vartheta_\epsilon$ the optimal value of problem \eqref{gs11}. We make the following technical assumption.

\begin{assumption}\label{Assu3}
 $\abs{\vartheta_\epsilon - v^*} \to 0 ~\text{as}~\epsilon\downarrow0.$
\end{assumption}

\begin{remark}
Assumption \ref{Assu3} is a standard assumption and used widely in the perturbation analysis for parametric programming, see also \cite[Assumption 1]{CSW2015regularized} and \cite[Chapter 4]{BS2013perturbation}.
\end{remark}

\begin{theorem}
\label{Th2}
Under Assumption \ref{Assu3}, if  $r_\epsilon \rightarrow 0$ as $\epsilon\downarrow 0$, then we have
$$\abs{v_\epsilon^*-v^*}\rightarrow 0~\text{as}~\epsilon\downarrow0.$$
\end{theorem}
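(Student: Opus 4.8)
The plan is to sandwich the regularized optimal value $v_\epsilon^*$ between the auxiliary value $\vartheta_\epsilon$ and a quantity differing from $\vartheta_\epsilon$ by a term that vanishes with $r_\epsilon$, and then to invoke Assumption \ref{Assu3}. The crucial structural observation is that problems \eqref{DREC} and \eqref{R-DREC} share the \emph{same} objective function $\Phi(x,y)=\theta(x)+\max_{P\in\mathcal{P}}h(\mathbb{E}_P[f(x,y(\xi),\xi)])$ and differ only in their feasible sets; consequently $v_\epsilon^*=\inf_{x\in X}\Phi(x,\hat{y}_\epsilon(x,\cdot))$, i.e. $v_\epsilon^*$ is obtained by minimizing $\Phi$ along the ``center'' $y=\hat{y}_\epsilon(x,\cdot)$ of the ball $\mathcal{M}_\epsilon(x)$ defining problem \eqref{gs11}.

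First I would establish the lower bound $\vartheta_\epsilon\le v_\epsilon^*$. For every $x\in X$ the center satisfies $\sup_{P\in\mathcal{P}}\mathbb{E}_P[\norm{\hat{y}_\epsilon(x,\xi)-\hat{y}_\epsilon(x,\xi)}^2]=0\le r_\epsilon$, so $(x,\hat{y}_\epsilon(x,\cdot))$ is feasible for \eqref{gs11}. Since the pairs $(x,\hat{y}_\epsilon(x,\cdot))$ form a subset of the feasible set of \eqref{gs11}, taking the infimum over $x$ of $\Phi(x,\hat{y}_\epsilon(x,\cdot))$ yields $\vartheta_\epsilon\le v_\epsilon^*$.

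For the matching upper bound I would exploit a uniform Lipschitz estimate of $\Phi$ over the ball $\mathcal{M}_\epsilon(x)$. Reusing the computation in the proof of Theorem \ref{Th-E}, for any $x\in X$ and any $y\in\mathcal{M}_\epsilon(x)$, the Lipschitz continuity of $h$ and $f(\cdot,\cdot,\xi)$ together with the Cauchy--Schwarz inequality give
\begin{align*}
\abs{\Phi(x,y)-\Phi(x,\hat{y}_\epsilon(x,\cdot))}
&\le L_h\max_{P\in\mathcal{P}}\left(\mathbb{E}_P[L_f^2(\xi)]\right)^{1/2}\left(\mathbb{E}_P[\norm{y(\xi)-\hat{y}_\epsilon(x,\xi)}^2]\right)^{1/2}\\
&\le L_h\left(\max_{P\in\mathcal{P}}\mathbb{E}_P[L_f^2(\xi)]\right)^{1/2}\sqrt{r_\epsilon},
\end{align*}
where the last bound uses $\mathbb{E}_P[\norm{y(\xi)-\hat{y}_\epsilon(x,\xi)}^2]\le\sup_{Q\in\mathcal{P}}\mathbb{E}_Q[\norm{y(\xi)-\hat{y}_\epsilon(x,\xi)}^2]\le r_\epsilon$ for $y\in\mathcal{M}_\epsilon(x)$ and the finite-moment assumption $\max_{P\in\mathcal{P}}\mathbb{E}_P[L_f^2(\xi)]<\infty$. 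Setting $C:=L_h(\max_{P\in\mathcal{P}}\mathbb{E}_P[L_f^2(\xi)])^{1/2}$, a constant independent of $\epsilon$ and $x$, this yields $\Phi(x,\hat{y}_\epsilon(x,\cdot))\le\Phi(x,y)+C\sqrt{r_\epsilon}$; taking the infimum first over $y\in\mathcal{M}_\epsilon(x)$ and then over $x\in X$ gives $v_\epsilon^*\le\vartheta_\epsilon+C\sqrt{r_\epsilon}$.

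Combining the two bounds, $\vartheta_\epsilon\le v_\epsilon^*\le\vartheta_\epsilon+C\sqrt{r_\epsilon}$, and letting $\epsilon\downarrow0$ finishes the argument: by Assumption \ref{Assu3} we have $\vartheta_\epsilon\to v^*$, and since $r_\epsilon\to0$ the slack $C\sqrt{r_\epsilon}\to0$, whence $\abs{v_\epsilon^*-v^*}\to0$. The main obstacle is effectively absorbed by Assumption \ref{Assu3}: the delicate point is that $\hat{y}_\epsilon(x,\cdot)$ need not converge to any particular unregularized LCP solution, so $v_\epsilon^*$ cannot be compared with $v^*$ directly. The auxiliary problem \eqref{gs11}, with its $r_\epsilon$-ball, is precisely the device that bridges this gap, while the finite-moment condition is what renders the Lipschitz slack uniform in $x$ and hence summable into the two-sided estimate.
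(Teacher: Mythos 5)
Your proof is correct and follows essentially the same route as the paper: both compare $v_\epsilon^*$ with the auxiliary value $\vartheta_\epsilon$ by exploiting that $\hat{y}_\epsilon(x,\cdot)$ is the (feasible) center of the ball $\mathcal{M}_\epsilon(x)$, bound the gap by $L_h\bigl(\sup_{P\in\mathcal{P}}\mathbb{E}_P[L_f^2(\xi)]\bigr)^{1/2}\sqrt{r_\epsilon}$ via the Lipschitz properties and H\"older's inequality, and then invoke Assumption \ref{Assu3}. Your two-sided sandwich $\vartheta_\epsilon\le v_\epsilon^*\le\vartheta_\epsilon+C\sqrt{r_\epsilon}$ merely makes explicit the sign information that the paper's single chain of inequalities uses implicitly.
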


\begin{proof}
Using the Lipschitz continuity conditions on $\theta, h, f(\cdot,\cdot,\xi)$, we have
\begin{equation*}
\begin{aligned}
&\abs{ v_\epsilon^* - \vartheta_\epsilon } = \bigg\vert \min_{x\in X} \left(\theta(x) + \max_{P\in\mathcal{P}}h(\mathbb{E}_P[f(x,\hat{y}_\epsilon(x,\xi),\xi)]) \right) \\
&~~~~~~~~~~~~~~~~~-  \min_{x\in X} \left(\theta(x) + \min_{y\in \mathcal{M}_\epsilon(x)} \max_{P\in\mathcal{P}}h(\mathbb{E}_P[f(x,y(\xi),\xi)])  \right) \bigg\vert \\
&\leq \max_{x\in X} \left( \max_{P\in\mathcal{P}}h(\mathbb{E}_P[f(x,\hat{y}_\epsilon(x,\xi),\xi)])  -  \min_{y\in \mathcal{M}_\epsilon(x)} \max_{P\in\mathcal{P}}h(\mathbb{E}_P[f(x,y(\xi),\xi)]) \right) \\
&= \max_{x\in X} \max_{y\in \mathcal{M}_\epsilon(x)} \left(\max_{P\in\mathcal{P}}h(\mathbb{E}_P[f(x,\hat{y}_\epsilon(x,\xi),\xi)])  -   \max_{P\in\mathcal{P}}h(\mathbb{E}_P[f(x,y(\xi),\xi)]) \right)  \\
&\leq L_h\sup_{x\in X, y\in \mathcal{M}_\epsilon(x), P\in\mathcal{P}} \mathbb{E}_P[L_f(\xi)\|\hat{y}_\epsilon(x,\xi)-y(\xi)\|].
\end{aligned}
\end{equation*}
By H{\"o}lder inequality, we have that
\begin{align*}
\sup_{x\in X, y\in \mathcal{M}_\epsilon(x), P\in\mathcal{P} } \mathbb{E}_P[L_f(\xi)\|\hat{y}_\epsilon(x,\xi)-y(\xi)\|] &\leq \sqrt{r_\epsilon}  \left( \sup_{P\in\mathcal{P}}\mathbb{E}_P[L_f^2(\xi)] \right)^{\frac{1}{2}}
\rightarrow 0
\end{align*}
as $\epsilon\downarrow 0$ due to $\sup_{P\in\mathcal{P}}\mathbb{E}_P[L_f^2(\xi)]<\infty$ and $r_\epsilon\rightarrow 0$ as $\epsilon\downarrow 0$. Thus, we obtain
$\abs{v_\epsilon^* - \vartheta_\epsilon}\rightarrow 0$. This together with Assumption \ref{Assu3} implies that $\abs{v_\epsilon^*-v^*}\rightarrow 0$ as $\epsilon\downarrow 0$.
\qed
\end{proof}

Since $M(\xi)$ is positive semidefinite and the solution set of LCP$(M(\xi), q(x,\xi))$ is nonempty
for any $x\in X$ and $\xi \in \Xi$ by the assumption of relatively complete recourse, the unique solution    $\hat{y}_\epsilon(x,\xi)$ converges to
the least norm solution $\bar{y}(x,\xi)$ of  LCP$(M(\xi), q(x,\xi))$ as $\epsilon \downarrow 0$ \cite[Theorem 5.6.2]{CPS1992linear}, which is defined by
$$\bar{y}(x,\xi)=\argmin~ \{\norm{y(x,\xi)}: y(x,\xi) \in  {\rm SOL}(M(\xi), q(x,\xi))\}.$$

%

\begin{proposition}\label{Prop5}
Suppose that $\sup_{P\in\mathcal{P}}\mathbb{E}_P[ \norm{\hat{y}_\epsilon(x,\xi)-\bar{y}(x,\xi)}^2]\to 0$ uniformly as $\epsilon \downarrow 0$ in $ X$ and $v^*=\min_{x\in X} \left(\theta(x)+ \max_{P\in\mathcal{P}}h(\mathbb{E}_P[f(x,\bar{y}(x,\xi),\xi)])\right)$. Then $$\abs{v_\epsilon^*-v^*}\rightarrow 0~\text{and}~ \d(X_\epsilon^*,X^*)\rightarrow 0~\text{as}~\epsilon\downarrow0.$$
\end{proposition}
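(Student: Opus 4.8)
The plan is to reduce both problems to the minimization of a single scalar objective over the compact set $X$ and to derive everything from uniform convergence of these objectives. Write $\phi_\epsilon(x):=\Phi_\epsilon(x,\hat{y}_\epsilon(x,\cdot))$, so that $v_\epsilon^*=\min_{x\in X}\phi_\epsilon(x)$ and $X_\epsilon^*=\argmin_{x\in X}\phi_\epsilon(x)$, and set $\phi(x):=\theta(x)+\max_{P\in\mathcal{P}}h(\mathbb{E}_P[f(x,\bar{y}(x,\xi),\xi)])$. The second hypothesis is exactly $v^*=\min_{x\in X}\phi(x)$. I first record that $\argmin_{x\in X}\phi\subseteq X^*$: if $\phi(x^\sharp)=v^*$, then by relatively complete recourse $(x^\sharp,\bar{y}(x^\sharp,\cdot))$ is feasible for \eqref{DREC-Ref} and $\Phi(x^\sharp,\bar{y}(x^\sharp,\cdot))=\phi(x^\sharp)=v^*$, so $(x^\sharp,\bar{y}(x^\sharp,\cdot))\in\mathcal{D}^*$ and $x^\sharp\in\proj_x\mathcal{D}^*=X^*$.

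Next I would prove the uniform convergence $\sup_{x\in X}\abs{\phi_\epsilon(x)-\phi(x)}\to0$ as $\epsilon\downarrow0$. Repeating the chain of inequalities from the proof of Theorem \ref{Th2} (the bound $\abs{\max_P a(P)-\max_P b(P)}\le\max_P\abs{a(P)-b(P)}$, the Lipschitz continuity of $h$ and of $f(\cdot,\cdot,\xi)$, and H\"older's inequality, together with $\max_P(a(P)b(P))\le(\max_P a(P))(\max_P b(P))$ for nonnegative $a,b$) yields, for every $x\in X$,
\begin{equation*}
\abs{\phi_\epsilon(x)-\phi(x)}\le L_h\Big(\max_{P\in\mathcal{P}}\mathbb{E}_P[L_f^2(\xi)]\Big)^{1/2}\Big(\max_{P\in\mathcal{P}}\mathbb{E}_P[\norm{\hat{y}_\epsilon(x,\xi)-\bar{y}(x,\xi)}^2]\Big)^{1/2}.
\end{equation*}
The first factor is finite by standing assumption and the last factor tends to $0$ uniformly in $x$ by the first hypothesis, so $\phi_\epsilon\to\phi$ uniformly on $X$. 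The value convergence is then immediate, since $\abs{v_\epsilon^*-v^*}=\abs{\min_X\phi_\epsilon-\min_X\phi}\le\sup_{x\in X}\abs{\phi_\epsilon(x)-\phi(x)}\to0$.

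For $\d(X_\epsilon^*,X^*)\to0$ I would argue by compactness and contradiction, the key preliminary being the continuity of $\phi$. Each $\phi_\epsilon$ is continuous (indeed Lipschitz, by the estimate already established for $\Phi_{\epsilon,k}$ combined with the Lipschitz continuity of $\hat{y}_\epsilon(\cdot,\xi)$ and the boundedness of $\Xi$), and a uniform limit of continuous functions is continuous, so $\phi\in C(X)$. Suppose $\d(X_\epsilon^*,X^*)\not\to0$; then there are $\eta>0$, $\epsilon_n\downarrow0$ and $x_n\in X_{\epsilon_n}^*$ with $\inf_{z\in X^*}\norm{x_n-z}\ge\eta$. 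By compactness of $X$ pass to a subsequence $x_n\to x^\sharp\in X$. Using continuity of $\phi$, the uniform bound $\abs{\phi(x_n)-\phi_{\epsilon_n}(x_n)}\le\sup_X\abs{\phi-\phi_{\epsilon_n}}\to0$, and $\phi_{\epsilon_n}(x_n)=v_{\epsilon_n}^*\to v^*$, I obtain $\phi(x^\sharp)=\lim_n\phi(x_n)=\lim_n\phi_{\epsilon_n}(x_n)=v^*=\min_X\phi$, hence $x^\sharp\in\argmin_X\phi\subseteq X^*$; this forces $\inf_{z\in X^*}\norm{x^\sharp-z}=0$, contradicting $\inf_{z\in X^*}\norm{x_n-z}\ge\eta$ in the limit.

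The main obstacle is the continuity of $\phi$. The least-norm solution $\bar{y}(\cdot,\xi)$ of a merely positive semidefinite (not definite) LCP is in general only upper semicontinuous and can jump, so continuity of $\phi$ cannot be read off directly from $\bar{y}$. The device that removes this difficulty is to obtain continuity of the limit for free from the uniform convergence of the regularized objectives $\phi_\epsilon$, which are continuous because $M(\xi)+\epsilon I$ is positive definite; once continuity of $\phi$ and uniform convergence are in hand, the remainder is the standard ``uniform convergence of objectives over a fixed compact set $\Rightarrow$ convergence of optimal values and upper semicontinuity of the argmin map'' argument.
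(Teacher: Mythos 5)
Your proof is correct, and its first half coincides with the paper's: the same chain of estimates (max-of-differences bound, Lipschitz continuity of $h$ and $f(\cdot,\cdot,\xi)$, H\"older's inequality) gives uniform convergence of $\phi_\epsilon(\cdot)=\Phi_\epsilon(\cdot,\hat{y}_\epsilon(\cdot,\cdot))$ to $\phi(\cdot)=\Phi(\cdot,\bar{y}(\cdot,\cdot))$ on $X$, hence $\abs{v_\epsilon^*-v^*}\to 0$. Where you genuinely differ is the solution-set part: the paper simply invokes \cite[Theorem 5.3]{SDR2014lectures} (uniform convergence of objectives over a compact set implies convergence of optimal values and of the argmin deviation), whereas you re-prove that stability result by compactness and contradiction. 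Your detour buys two things the citation leaves implicit. First, the cited theorem requires the limiting objective to be continuous on the compact set; since $\bar{y}(\cdot,\xi)$ is only the least-norm solution of a positive semidefinite LCP and may be discontinuous in $x$, this is not obvious, and your observation that $\phi$ inherits continuity as the uniform limit of the (Lipschitz) continuous functions $\phi_\epsilon$ is precisely the missing verification. Second, the theorem delivers $\d(X_\epsilon^*,\argmin_{x\in X}\phi(x))\to 0$, while the statement concerns $X^*=\proj_x\mathcal{D}^*$; your inclusion $\argmin_{x\in X}\phi(x)\subseteq X^*$, obtained from relatively complete recourse together with the hypothesis $v^*=\min_{x\in X}\phi(x)$, bridges that gap. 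The net effect is a longer but self-contained argument that also fills in hypotheses the paper's one-line citation takes for granted.
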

\begin{proof}
By assumptions and the boundness of $X$, we have
\begin{align*}
\abs{v_\epsilon^*-v^*}&=\bigg\vert \min_{x\in X} \left( \theta(x)+ \max_{P\in\mathcal{P}}h(\mathbb{E}_P[f(x,\hat{y}_\epsilon(x,\xi),\xi)]) \right) \\
&~~~~- \min_{x\in X} \left(\theta(x)+ \max_{P\in\mathcal{P}}h(\mathbb{E}_P[f(x,\bar{y}(x,\xi),\xi)])\right) \bigg\vert\\
&\leq \max_{x\in X}\abs{\max_{P\in\mathcal{P}}h(\mathbb{E}_P[f(x,\hat{y}_\epsilon(x,\xi),\xi)]) - \max_{P\in\mathcal{P}} h(\mathbb{E}_P[f(x,\bar{y}(x,\xi),\xi)])}\\
&\leq L_h \max_{x\in X}\max_{P\in\mathcal{P}}\mathbb{E}_P[ \norm{f(x,\hat{y}_\epsilon(x,\xi),\xi)] - \mathbb{E}_P[f(x,\bar{y}(x,\xi),\xi)}]\\
&\leq L_h \max_{x\in X}\max_{P\in\mathcal{P}}\mathbb{E}_P[ L_f(\xi)\norm{\hat{y}_\epsilon(x,\xi)-\bar{y}(x,\xi)}] \\
&\leq L_h \max_{x\in X}   \left( \sup_{P\in\mathcal{P}}\mathbb{E}_P[L_f^2(\xi)] \right)^{\frac{1}{2}}\left( \sup_{P\in\mathcal{P}}\mathbb{E}_P[ \norm{\hat{y}_\epsilon(x,\xi)-\bar{y}(x,\xi)}^2] \right)^{\frac{1}{2}}
\rightarrow 0
\end{align*}
as $\epsilon\downarrow0$. Moreover, from the above inequalities, we find that
$\Phi_\epsilon(x, \hat{y}_\epsilon(x,\cdot))$ converges to $\Phi(x, \bar{y}(x,\cdot))$ uniformly w.r.t. $x$ over $X$ as
$\epsilon \downarrow 0$. By \cite[Theorem 5.3]{SDR2014lectures}, we derive
the convergence of $\d(X_\epsilon^*,X^*)$.
\qed
\end{proof}

\subsection{Convergence analysis between problems \eqref{R-DREC} and \eqref{DREC-1}}

In this subsection, for a fixed $\epsilon>0$, we consider the convergence between problems \eqref{R-DREC} and \eqref{DREC-1} as $k\rightarrow\infty$. Let the support set $\Xi$ be bounded throughout this subsection.

By \cite[Theorem 2.8]{CX2008perturbation}, there exists an $\alpha>0$ such that for any $\xi_1, \xi_2\in \Xi$,
$$\max_{x\in X}\|\hat{y}_\epsilon(x,\xi_1)-\hat{y}_\epsilon(x, \xi_2)\|\le \frac{\alpha}{\epsilon}\max_{x\in X,\xi \in \Xi} \|q(x,\xi)\|\|\xi_1-\xi_2\|=:\hat{L}\|\xi_1-\xi_2\|,$$
where the existence of a constant $ \hat{L}>0$ employs the boundness of $X$ and $\Xi$ and Lipschitz continuity of $q$.
Moreover, if $f$ is Lipschitz continuous, there exists an $\bar{L}_f>0$ such that for any $x\in X$,
\begin{equation}
\label{gs18}
\norm{f(x,\hat{y}_\epsilon(x,\xi_1),\xi_1)-f(x,\hat{y}_\epsilon(x,\xi_2),\xi_2)}\leq \bar{L}_f \norm{\xi_1-\xi_2}.
\end{equation}

We give the following quantitative stability results between problems \eqref{RDREC} and \eqref{SSA-RDREC} based on Wasserstein metric.

\begin{theorem}
\label{Th4}
Let $f$ be Lipschitz continuous. There exists an $L>0$ such that
\begin{align}
\abs{v_{\epsilon,k}^* - v_\epsilon^* }&\leq L \mathbb{D}_W(\mathcal{P}, \mathcal{P}_k),\label{Th4_1}\\
\d(X_{\epsilon,k}^*, X_\epsilon^*) &\leq \mathcal{R}^{-1}(2L \mathbb{D}_W(\mathcal{P}, \mathcal{P}_k)),\label{Th4_2}
\end{align}
where $\mathcal{R}:\mathbb{R}_+\rightarrow \mathbb{R}_+$ is the growth function of problem \eqref{R-DREC}, i.e.,
\begin{equation*}
\mathcal{R}(\tau) :=\min \left\{\theta(x) + \max_{P\in\mathcal{P}}h(\mathbb{E}_P[f(x,\hat{y}_\epsilon(x,\xi),\xi)]) - v_\epsilon^*: \d(x, X_\epsilon^*)\geq \tau, x\in X \right\}
\end{equation*}
and its inverse
\begin{equation*}
\mathcal{R}^{-1}(t):=\sup\{\tau\in \mathbb{R}_+: \mathcal{R}(\tau) \leq t\}.
\end{equation*}
\end{theorem}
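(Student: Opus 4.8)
The plan is to reduce both problems to a common form that differs only in the ambiguity set, and then to exploit that the sole change is from $\mathcal{P}$ to $\mathcal{P}_k$, measured by the deviation distance $\mathbb{D}_W(\mathcal{P},\mathcal{P}_k)$. After substituting the unique solutions $\hat{y}_\epsilon(x,\cdot)$ and $\hat{\mathbf{y}}_\epsilon(x)$, problems \eqref{RDREC} and \eqref{SSA-RDREC} both take the form $\min_{x\in X}\,\theta(x)+V(x)$, where, writing $\phi_\epsilon(x,\xi):=f(x,\hat{y}_\epsilon(x,\xi),\xi)$, one has $V_\epsilon(x)=\max_{P\in\mathcal{P}}h(\mathbb{E}_P[\phi_\epsilon(x,\xi)])$ for \eqref{RDREC} and $V_{\epsilon,k}(x)=\max_{P\in\mathcal{P}_k}h(\mathbb{E}_P[\phi_\epsilon(x,\xi)])$ for \eqref{SSA-RDREC} (recalling that each $p\in\mathcal{P}_k$ is identified with a discrete member of $\mathcal{P}(\Xi)$ and $F(x,\hat{\mathbf{y}}_\epsilon(x))p=\mathbb{E}_p[\phi_\epsilon(x,\xi)]$). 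Because $\mathcal{P}_k\subseteq\mathcal{P}$, we immediately have $V_{\epsilon,k}(x)\le V_\epsilon(x)$ for every $x\in X$.

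The analytical core is a uniform-in-$x$ bound $0\le V_\epsilon(x)-V_{\epsilon,k}(x)\le L\,\mathbb{D}_W(\mathcal{P},\mathcal{P}_k)$. First I would note that by \eqref{gs18} the map $\xi\mapsto\phi_\epsilon(x,\xi)$ is $\bar{L}_f$-Lipschitz uniformly in $x\in X$; hence each scalar component, divided by $\bar{L}_f$, belongs to the class $\mathcal{H}$ of Definition \ref{Def4}, and the Kantorovich--Rubinstein duality yields $\norm{\mathbb{E}_P[\phi_\epsilon(x,\xi)]-\mathbb{E}_Q[\phi_\epsilon(x,\xi)]}\le\sqrt{l}\,\bar{L}_f\,\mathbb{D}_W(P,Q)$ for all $P,Q\in\mathcal{P}(\Xi)$. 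Composing with the $L_h$-Lipschitz $h$ and setting $L:=L_h\sqrt{l}\,\bar{L}_f$, for each fixed $x$ I would pick a (near-)maximizer $P_x\in\mathcal{P}$ of $V_\epsilon(x)$ and estimate, for arbitrary $Q\in\mathcal{P}_k$, $V_\epsilon(x)-V_{\epsilon,k}(x)\le h(\mathbb{E}_{P_x}[\phi_\epsilon])-h(\mathbb{E}_Q[\phi_\epsilon])\le L\,\mathbb{D}_W(P_x,Q)$; taking the infimum over $Q\in\mathcal{P}_k$ and then the supremum over $P_x\in\mathcal{P}$ converts the right-hand side into the deviation distance $L\,\mathbb{D}_W(\mathcal{P},\mathcal{P}_k)$. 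Since $\theta(x)$ cancels, the two objectives of \eqref{RDREC} and \eqref{SSA-RDREC} are within $L\,\mathbb{D}_W(\mathcal{P},\mathcal{P}_k)$ uniformly on $X$, and minimizing a pair of uniformly $\delta$-close functions over the same set $X$ changes the optimal value by at most $\delta$, which gives \eqref{Th4_1}.

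For \eqref{Th4_2} I would pass from value closeness to solution-set closeness through the growth function $\mathcal{R}$. Take any $x_k\in X_{\epsilon,k}^*$ and decompose the true optimality gap as $[\theta(x_k)+V_\epsilon(x_k)]-v_\epsilon^*=\big([\theta(x_k)+V_\epsilon(x_k)]-[\theta(x_k)+V_{\epsilon,k}(x_k)]\big)+\big(v_{\epsilon,k}^*-v_\epsilon^*\big)$, using $\theta(x_k)+V_{\epsilon,k}(x_k)=v_{\epsilon,k}^*$. Each summand is at most $L\,\mathbb{D}_W(\mathcal{P},\mathcal{P}_k)$ by the previous paragraph and \eqref{Th4_1}, so the gap is at most $2L\,\mathbb{D}_W(\mathcal{P},\mathcal{P}_k)$. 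Setting $\tau_k:=\d(x_k,X_\epsilon^*)$, the defining property of $\mathcal{R}$ gives $[\theta(x_k)+V_\epsilon(x_k)]-v_\epsilon^*\ge\mathcal{R}(\tau_k)$ since $x_k$ is admissible in the minimization defining $\mathcal{R}(\tau_k)$; hence $\mathcal{R}(\tau_k)\le 2L\,\mathbb{D}_W(\mathcal{P},\mathcal{P}_k)$, and by the definition of $\mathcal{R}^{-1}$ as a supremum, $\tau_k\le\mathcal{R}^{-1}(2L\,\mathbb{D}_W(\mathcal{P},\mathcal{P}_k))$. Taking the supremum over $x_k\in X_{\epsilon,k}^*$ yields \eqref{Th4_2}.

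The step I expect to be the main obstacle is the uniform conversion in the second paragraph: although the worst-case distribution $P_x$ depends on $x$, the bound must end up with the $x$-independent deviation distance. The key enabling fact is that the Lipschitz modulus $\bar{L}_f$ in \eqref{gs18} is uniform over $x\in X$ (it uses only boundedness of $X,\Xi$ and the $1/\epsilon$ Lipschitz estimate for $\hat{y}_\epsilon(\cdot,\cdot)$ in $\xi$), so the Kantorovich--Rubinstein bound feeding into the estimate does not degrade with $x$; a secondary point requiring care is verifying that $\mathcal{R}$ is nondecreasing, which legitimizes the inversion argument and the factor $2$ entering \eqref{Th4_2}.
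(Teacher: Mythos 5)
Your proposal is correct and follows essentially the same route as the paper: a uniform-in-$x$ bound $0\le V_\epsilon(x)-V_{\epsilon,k}(x)\le L\,\mathbb{D}_W(\mathcal{P},\mathcal{P}_k)$ derived from \eqref{gs18}, the Lipschitz continuity of $h$, and Kantorovich--Rubinstein duality (the paper phrases your choice of near-maximizer $P_x$ as a max--min identity exploiting $\mathcal{P}_k\subseteq\mathcal{P}$, and obtains the sharper constant $L=L_h\bar{L}_f$ without your $\sqrt{l}$ factor via a dual-norm rather than componentwise argument, but the theorem only asserts existence of some $L>0$), followed by the identical growth-function argument for \eqref{Th4_2}. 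One small remark: monotonicity of $\mathcal{R}$ is not actually needed, since $\mathcal{R}^{-1}(t)$ is defined as a supremum, so $\mathcal{R}(\tau_k)\le t$ immediately yields $\tau_k\le\mathcal{R}^{-1}(t)$, exactly as you in fact argued.
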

\begin{proof}
Using (\ref{gs18}), we have
\begin{align*}
\abs{ v_\epsilon^* - v_{\epsilon,k}^*} &=  \bigg\vert \min_{x\in X} \left( \theta(x) + \max_{P\in\mathcal{P}}h(\mathbb{E}_P[f(x,\hat{y}_\epsilon(x,\xi),\xi)]) \right)  \\
&~~~~~~~~~~~~~~~~~~- \min_{x\in X} \left(\theta(x) + \max_{Q\in\mathcal{P}_k}h(\mathbb{E}_Q[f(x,\hat{y}_\epsilon(x,\xi),\xi)]) \right) \bigg\vert \\
&\leq\max_{x\in X} \abs{\max_{P\in\mathcal{P}}h(\mathbb{E}_P[f(x,\hat{y}_\epsilon(x,\xi),\xi)])  - \max_{Q\in\mathcal{P}_k}h(\mathbb{E}_Q[f(x,\hat{y}_\epsilon(x,\xi),\xi)]) }\\
&=\max_{x\in X}  \left( \max_{P\in\mathcal{P}}h(\mathbb{E}_P[f(x,\hat{y}_\epsilon(x,\xi),\xi)])  - \max_{Q\in\mathcal{P}_k}h(\mathbb{E}_Q[f(x,\hat{y}_\epsilon(x,\xi),\xi)]) \right)\\
&=\max_{x\in X}\max_{P\in\mathcal{P}} \min_{Q\in\mathcal{P}_k} \left( h(\mathbb{E}_P[f(x,\hat{y}_\epsilon(x,\xi),\xi)]) - h(\mathbb{E}_Q[f(x,\hat{y}_\epsilon(x,\xi),\xi)] \right)\\
&\leq \max_{x\in X}L_h \max_{P\in\mathcal{P}} \min_{Q\in\mathcal{P}_k} \norm{\mathbb{E}_P[f(x,\hat{y}_\epsilon(x,\xi),\xi)] - \mathbb{E}_Q[f(x,\hat{y}_\epsilon(x,\xi),\xi)] }\\
&\leq L_h\bar{L}_f \mathbb{D}_W(\mathcal{P},\mathcal{P}_k),
\end{align*}
where  the second equality follows from  $\mathcal{P}_k\subseteq \mathcal{P}$,
and the last inequality follows from Definition \ref{Def4}.  We obtain (\ref{Th4_1}) by setting  $L:=L_h\bar{L}_f$.

From (\ref{Th4_1}), for any $\tilde{x}\in X_{\epsilon,k}^*$, it holds that
\begin{align*}
2L \mathbb{D}_W(\mathcal{P}, \mathcal{P}_k) &\geq L \mathbb{D}_W(\mathcal{P}, \mathcal{P}_k ) + \abs{v_\epsilon^* - v_{\epsilon,k}^*}\\
&\geq L \mathbb{D}_W(\mathcal{P}, \mathcal{P}_k ) + v_{\epsilon,k}^* - v_\epsilon^*\\
&\geq \theta(\tilde{x}) + \max_{P\in\mathcal{P}}h(\mathbb{E}_P[f(\tilde{x},\hat{y}_\epsilon(\tilde{x},\xi),\xi)])  \\
&~~~~- \left(\theta(\tilde{x}) + \max_{Q\in\mathcal{P}_k}h(\mathbb{E}_Q[f(\tilde{x},\hat{y}_\epsilon(\tilde{x},\xi),\xi)]) \right) + v_{\epsilon,k}^* - v_\epsilon^*\\
&= \theta(\tilde{x}) + \sup\limits_{P\in\mathcal{P}}h(\mathbb{E}_P[f(\tilde{x},\hat{y}_\epsilon(\tilde{x},\xi),\xi)]) - v_\epsilon^* \\
&\geq \mathcal{R}( \d(\tilde{x}, X_\epsilon^*) ),
\end{align*}
which implies that
$$\d(\tilde{x}, X_\epsilon^*) \leq \mathcal{R}^{-1}\left( 2L \mathbb{D}_W(\mathcal{P}, \mathcal{P}_k)\right).$$
Due to the arbitrariness of $\tilde{x}\in X_{\epsilon,k}^*$, we derive (\ref{Th4_2}).
\qed
\end{proof}

By directly using Theorem \ref{Th4} and Proposition \ref{Prop3}, we obtain the following convergence results.
\begin{theorem}
\label{Th3}
Under the conditions of Proposition \ref{Prop3} and Theorem \ref{Th4}, we have
$$\abs{v_{\epsilon,k}^*-v_\epsilon^*}\rightarrow 0 ~\text{and}~\d(X_{\epsilon,k}^*,X_\epsilon^*)\rightarrow 0~\text{as}~ k\rightarrow \infty.$$
\end{theorem}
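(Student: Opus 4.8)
The plan is to reduce both assertions to the single fact that $\mathbb{D}_W(\mathcal{P},\mathcal{P}_k)\to 0$ as $k\to\infty$, and then feed this into the two estimates of Theorem \ref{Th4}. First I would invoke Proposition \ref{Prop3}: under its hypotheses (which are assumed here) there is a finite constant with $\mathbb{D}_W(\mathcal{P},\mathcal{P}_k)\le L\beta_k$ for all large $k$, and since the sampling scheme satisfies $\beta_k\to 0$, we obtain $\mathbb{D}_W(\mathcal{P},\mathcal{P}_k)\to 0$. The value convergence is then immediate: substituting into (\ref{Th4_1}) gives $\abs{v_{\epsilon,k}^*-v_\epsilon^*}\le L\,\mathbb{D}_W(\mathcal{P},\mathcal{P}_k)\to 0$, where $L$ is the constant furnished by Theorem \ref{Th4}.

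For the solution-set convergence I would combine (\ref{Th4_2}), namely $\d(X_{\epsilon,k}^*,X_\epsilon^*)\le \mathcal{R}^{-1}(2L\,\mathbb{D}_W(\mathcal{P},\mathcal{P}_k))$, with continuity of the growth function's inverse at the origin. Since the argument $2L\,\mathbb{D}_W(\mathcal{P},\mathcal{P}_k)\to 0$, it suffices to show that $\mathcal{R}^{-1}(t)\to 0$ as $t\downarrow 0$. This is the one step that is not purely mechanical, and it is where I expect the real work to lie.

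To establish $\mathcal{R}^{-1}(t)\to 0$, I would first note that $\mathcal{R}$ is nondecreasing, because enlarging $\tau$ shrinks the set $\{x\in X:\d(x,X_\epsilon^*)\ge\tau\}$ over which the minimum defining $\mathcal{R}(\tau)$ is taken. Next I would show $\mathcal{R}(\tau)>0$ for every $\tau>0$: the objective $\Phi_\epsilon(x,\hat{y}_\epsilon(x,\cdot))$ is Lipschitz, hence continuous, on the compact set $X$, and the set $\{x\in X:\d(x,X_\epsilon^*)\ge\tau\}$ is compact, so the minimum in $\mathcal{R}(\tau)$ is attained at some $\bar{x}$; if $\mathcal{R}(\tau)=0$ then $\bar{x}$ would be a minimizer of problem \eqref{RDREC} with $\d(\bar{x},X_\epsilon^*)\ge\tau>0$, contradicting $\bar{x}\in X_\epsilon^*$. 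With monotonicity and strict positivity in hand, a standard argument closes the proof: if $\mathcal{R}^{-1}(t_n)\not\to 0$ along some sequence $t_n\downarrow 0$, then $\mathcal{R}^{-1}(t_n)\ge\tau_0$ for some $\tau_0>0$ and all $n$, so by monotonicity $\mathcal{R}(\tau_0/2)\le t_n\to 0$, forcing $\mathcal{R}(\tau_0/2)=0$ and contradicting positivity. Hence $\mathcal{R}^{-1}(2L\,\mathbb{D}_W(\mathcal{P},\mathcal{P}_k))\to 0$, and therefore $\d(X_{\epsilon,k}^*,X_\epsilon^*)\to 0$, which completes the argument.
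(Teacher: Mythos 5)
Your proof is correct and takes the same route as the paper, whose entire argument is the one-line observation that Proposition \ref{Prop3} gives $\mathbb{D}_W(\mathcal{P},\mathcal{P}_k)\le L\beta_k\to 0$ and Theorem \ref{Th4} then yields both limits. The only addition is your verification that $\mathcal{R}^{-1}(t)\to 0$ as $t\downarrow 0$ (via monotonicity of $\mathcal{R}$ and its strict positivity, which you obtain from continuity of the objective and compactness of $\{x\in X:\d(x,X_\epsilon^*)\ge\tau\}$); this fills in a detail the paper treats as immediate, and your argument for it is sound.
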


Finally, Theorem \ref{Th2} together with Theorem \ref{Th3} entails the following convergence results
from \eqref{DREC-1} to \eqref{DREC}.

\begin{theorem} Suppose that the conditions of Theorems \ref{Th2} and \ref{Th3} hold. Then
$$\lim_{\epsilon\downarrow 0}\lim_{k\rightarrow \infty} v_{\epsilon,k}^* = v^*.$$
If, moreover, the conditions of Proposition \ref{Prop5} hold, then
$$\lim_{\epsilon\downarrow 0}\lim_{k\rightarrow \infty} \d(X_{\epsilon,k}^*,X^*)=0.$$
\end{theorem}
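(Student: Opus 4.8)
The plan is to obtain the result purely by chaining the two convergence statements already established, treating the value convergence and the solution-set convergence separately, since in each case the inner limit (in $k$) and the outer limit (in $\epsilon$) are controlled by distinct earlier results. No new estimates are needed; the work is entirely in sequencing the limits correctly.

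For the value statement I would first fix $\epsilon>0$ and invoke Theorem~\ref{Th3}, which gives $|v_{\epsilon,k}^*-v_\epsilon^*|\to 0$ as $k\to\infty$, hence $\lim_{k\to\infty}v_{\epsilon,k}^*=v_\epsilon^*$. Substituting this inner limit, the outer limit reduces to $\lim_{\epsilon\downarrow 0}v_\epsilon^*$, which equals $v^*$ by Theorem~\ref{Th2}. Concatenating the two yields $\lim_{\epsilon\downarrow 0}\lim_{k\to\infty}v_{\epsilon,k}^*=\lim_{\epsilon\downarrow 0}v_\epsilon^*=v^*$, so this part is essentially immediate.

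For the solution-set statement the only extra ingredient is a quasi-triangle inequality for the one-sided deviation $\d(\cdot,\cdot)$. For any $x\in X_{\epsilon,k}^*$ and any $w\in X_\epsilon^*$ one has $\d(x,X^*)\le \norm{x-w}+\d(w,X^*)$; taking the infimum over $w\in X_\epsilon^*$ and then the supremum over $x\in X_{\epsilon,k}^*$ yields
\begin{equation*}
\d(X_{\epsilon,k}^*,X^*)\le \d(X_{\epsilon,k}^*,X_\epsilon^*)+\d(X_\epsilon^*,X^*).
\end{equation*}
Fixing $\epsilon$ and letting $k\to\infty$, Theorem~\ref{Th3} forces the first term on the right to vanish, so $\limsup_{k\to\infty}\d(X_{\epsilon,k}^*,X^*)\le \d(X_\epsilon^*,X^*)$, while nonnegativity of $\d$ supplies the matching lower bound $0$. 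Finally, letting $\epsilon\downarrow 0$ and using Proposition~\ref{Prop5}, which guarantees $\d(X_\epsilon^*,X^*)\to 0$, squeezes the iterated limit to zero.

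The two direct substitutions are routine; the one point that needs care — and which I expect to be the main (though mild) obstacle — is the solution-set limit, because $\d$ is a one-sided excess rather than a genuine metric. Its quasi-triangle inequality only propagates deviations in the direction $X_{\epsilon,k}^*\to X_\epsilon^*\to X^*$, which is precisely the direction supplied by Theorem~\ref{Th3} and Proposition~\ref{Prop5}. One must also keep the order of limits straight: the inner $k$-limit is taken first, so that the $\epsilon$-dependent quantity $\d(X_\epsilon^*,X^*)$ survives intact into the outer limit, where Proposition~\ref{Prop5} sends it to zero.
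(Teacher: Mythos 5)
Your proposal is correct and matches the paper's intent exactly: the paper states this theorem without a separate proof, presenting it as the direct concatenation of Theorem~\ref{Th2} (or Proposition~\ref{Prop5}) for the $\epsilon$-limit with Theorem~\ref{Th3} for the $k$-limit, which is precisely your chaining argument. Your explicit verification of the one-sided quasi-triangle inequality $\d(X_{\epsilon,k}^*,X^*)\le \d(X_{\epsilon,k}^*,X_\epsilon^*)+\d(X_\epsilon^*,X^*)$, with attention to the direction of the excess, is the right way to fill in the detail the paper leaves implicit.
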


\section{Stationarity of problems (\ref{DREC}) and (\ref{DREC-1})}\label{Sec4}

In this section, we consider the convergence of stationary points of problem (\ref{DREC-1}) to these of problem (\ref{DREC}) as $\epsilon\downarrow 0$ and $k\rightarrow \infty$. We first consider the stationary points defined by the regular normal cone in \cite{RW2009variational}.

The regular normal cone to a closed set $\Omega$ at $\bar{x}\in \Omega$, denoted by $\widehat{\mathcal{N}}_\Omega(\bar{x})$, is
$$\widehat{\mathcal{N}}_\Omega(\bar{x}):=\{d: d^\top (x-\bar{x})\leq o(\norm{x-\bar{x}}),~ \forall x\in \Omega\},$$
where $o(\cdot)$ means that $o(a)/a\rightarrow0$ as $a\downarrow0$. The (limiting) normal cone to a closed set $\Omega$ at $\bar{x}\in \Omega$, denoted by $\mathcal{N}_\Omega(\bar{x})$, is
$$\mathcal{N}_\Omega(\bar{x}):=\{d: \exists\, x^k \in \Omega, x^k\rightarrow \bar{x},  \,\, d^k\rightarrow d \, {\rm with} \,  d^k \in \widehat{\mathcal{N}}_\Omega(x^k)\}.$$

$\widehat{\mathcal{N}}_\Omega(\bar{x})$ is a closed and convex cone and $\widehat{\mathcal{N}}_\Omega(\cdot)$ is outer semicontinuous over $\Omega$. $\mathcal{N}_\Omega(\bar{x})$ is a closed cone. Generally, we have $\widehat{\mathcal{N}}_\Omega(\bar{x}) \subseteq \mathcal{N}_\Omega(\bar{x})$.
They are consistent when $\Omega$ is convex, namely,
$$\widehat{\mathcal{N}}_\Omega(\bar{x})=\mathcal{N}_\Omega(\bar{x})=\{d: d^\top (x-\bar{x})\leq 0,~ \forall x\in \Omega\}.$$

A necessary condition for $\bar{x}$ to be a local minimizer of $\min_{x\in\Omega}~  \phi(x)$, where $\phi$ is a differentiable function over $\Omega$, is (see \cite[Theorem 6.12]{RW2009variational})
$$0\in \nabla \phi(\bar{x}) + \widehat{\mathcal{N}}_\Omega(\bar{x}).$$

In Section \ref{Sec4-1}, we focus on  the concepts of stationary points of  problems \eqref{DREC}, \eqref{DDREC} and \eqref{DREC-1}. In Section \ref{Sec4-2}, we give convergence results regarding the stationary points between problems \eqref{DREC-1} and \eqref{DREC} as $\epsilon \downarrow 0$ and $k\to \infty$.

\subsection{Concepts of stationarity}\label{Sec4-1}

We first consider the concepts of stationary points of the discrete problems \eqref{DDREC} and \eqref{DREC-1}. To simplify the notation, we denote
\begin{align*}
&G(x,\mathbf{y},p)=\theta(x) + h( F(x,\mathbf{y})p ),\\
&\mathcal{Z}=\{(x,\mathbf{y}): x\in X, \,\, \mathbf{y}\in {\rm SOL}(\mathbf{M},\mathbf{q}(x))\},\\
&\mathcal{Z}(x)=\{\mathbf{y}:(x,\mathbf{y})\in \mathcal{Z}\}, \,\,\, ~~\mathcal{Z}(\mathbf{y})=\{x:(x,\mathbf{y})\in \mathcal{Z}\},\\
&\mathcal{Z}_\epsilon=\{(x,\mathbf{y}): x\in X, \,\, \mathbf{y}\in {\rm SOL}(\mathbf{M}+\epsilon I ,\mathbf{q}(x))\},\\
&\mathcal{Z}_\epsilon(x)=\{\mathbf{y}:(x,\mathbf{y})\in \mathcal{Z}_\epsilon\}, ~~ \,\,\, \mathcal{Z}_\epsilon(\mathbf{y})=\{x:(x,\mathbf{y})\in \mathcal{Z}_\epsilon\}.
\end{align*}
Then problems \eqref{DDREC} and \eqref{DREC-1} can be rewritten, respectively,  as
\begin{equation}
\label{D-DREC}
\min\limits_{(x,\mathbf{y})\in \mathcal{Z}} \max_{p\in\mathcal{P}_k}~ G(x,\mathbf{y},p)
\end{equation}
and
\begin{equation}
\label{DREC-4}
\min\limits_{(x,\mathbf{y})\in \mathcal{Z}_\epsilon} \max_{p\in\mathcal{P}_k}~  G(x,\mathbf{y},p).
\end{equation}
Note that the set ${\cal P}_k$ is convex and bounded. We have the following optimality condition for a local minimax point.

\begin{proposition}[optimality condition for a local minimax point]
\label{Prop2}
If $(\bar{x},\bar{\mathbf{y}},\bar{p})\in \mathcal{Z}\times \mathcal{P}_k$ is a local minimax point of problem \eqref{D-DREC}, then it satisfies
\begin{equation}\label{stationary0}
\begin{cases}
0\in \nabla_{(x,\mathbf{y})} G(\bar{x},\bar{\mathbf{y}},\bar{p}) + \widehat{\mathcal{N}}_{\mathcal{Z}}(\bar{x},\bar{\mathbf{y}}),\\
0\in -\nabla_{p}G(\bar{x},\bar{\mathbf{y}},\bar{p}) + \mathcal{N}_{\mathcal{P}_k}(\bar{p}).
\end{cases}
\end{equation}
\end{proposition}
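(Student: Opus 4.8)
The plan is to treat the two inclusions in \eqref{stationary0} separately, using that $G$ is jointly continuously differentiable (indeed $G(x,\mathbf{y},p)=\theta(x)+h\big(\sum_{i=1}^k p_i f(x,y(\xi^i),\xi^i)\big)$ is $C^1$ in $(x,\mathbf{y})$ and linear in $p$, since $\theta$, $h$ and each $f(\cdot,\cdot,\xi^i)$ are $C^1$) and that $\mathcal{P}_k$ is convex and bounded. Writing the local minimax condition for \eqref{D-DREC} at $(\bar{x},\bar{\mathbf{y}},\bar{p})$ analogously to Definition \ref{minimax} yields, for some $\delta_0>0$ and some $\varsigma$ with $\varsigma(\delta)\to0$, the pair of inequalities
\[
G(\bar{x},\bar{\mathbf{y}},p)\le G(\bar{x},\bar{\mathbf{y}},\bar{p})\le \max_{q\in\mathcal{P}_k,\,\norm{q-\bar{p}}\le\varsigma(\delta)}G(x,\mathbf{y},q)
\]
valid for all $\delta\in(0,\delta_0]$, all $(x,\mathbf{y})\in\mathcal{Z}$ with $\norm{x-\bar{x}}\le\delta$ and $\norm{\mathbf{y}-\bar{\mathbf{y}}}\le\delta$, and all $p\in\mathcal{P}_k$ with $\mathbb{D}_W(p,\bar{p})\le\delta$ (on the finite set $\Xi^k$ the Wasserstein distance is a norm on the weight difference, hence comparable to $\norm{\cdot}$, so I will freely measure $p$-perturbations in $\norm{\cdot}$).

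For the $p$-inclusion I would use only the left inequality. Taking $\delta=\delta_0$, it says that $\bar{p}$ is a local maximizer of the differentiable map $p\mapsto G(\bar{x},\bar{\mathbf{y}},p)$ over the convex set $\mathcal{P}_k$. Moving along the segment $[\bar{p},q]\subseteq\mathcal{P}_k$ for an arbitrary $q\in\mathcal{P}_k$ and letting the step size vanish gives $\inp{\nabla_p G(\bar{x},\bar{\mathbf{y}},\bar{p}),\,q-\bar{p}}\le0$ for every $q\in\mathcal{P}_k$; by convexity of $\mathcal{P}_k$ this is exactly $\nabla_p G(\bar{x},\bar{\mathbf{y}},\bar{p})\in\mathcal{N}_{\mathcal{P}_k}(\bar{p})$, which is the second line of \eqref{stationary0}.

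For the $(x,\mathbf{y})$-inclusion I would argue directly from the definition of the regular normal cone. Fix an arbitrary sequence $(x^j,\mathbf{y}^j)\in\mathcal{Z}$ with $(x^j,\mathbf{y}^j)\to(\bar{x},\bar{\mathbf{y}})$, set $\delta_j=\max\{\norm{x^j-\bar{x}},\norm{\mathbf{y}^j-\bar{\mathbf{y}}}\}$, and let $q^j$ attain the restricted maximum on the right side with $\delta=\delta_j$, so that $\norm{q^j-\bar{p}}\le\varsigma(\delta_j)\to0$ and $G(\bar{x},\bar{\mathbf{y}},\bar{p})\le G(x^j,\mathbf{y}^j,q^j)$. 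The key device is the decomposition
\[
G(x^j,\mathbf{y}^j,q^j)-G(\bar{x},\bar{\mathbf{y}},\bar{p})=\big[G(x^j,\mathbf{y}^j,q^j)-G(\bar{x},\bar{\mathbf{y}},q^j)\big]+\big[G(\bar{x},\bar{\mathbf{y}},q^j)-G(\bar{x},\bar{\mathbf{y}},\bar{p})\big].
\]
For $j$ large, $q^j$ lies in the $\delta_0$-ball, so the second bracket is $\le0$ by the $p$-maximality just established, whence $0\le G(x^j,\mathbf{y}^j,q^j)-G(\bar{x},\bar{\mathbf{y}},q^j)$. A first-order expansion of this last difference in $(x,\mathbf{y})$ at $(\bar{x},\bar{\mathbf{y}})$ with $q^j$ frozen, combined with the continuity of $\nabla_{(x,\mathbf{y})}G$ in $p$ (used to replace $\nabla_{(x,\mathbf{y})}G(\bar{x},\bar{\mathbf{y}},q^j)$ by $\nabla_{(x,\mathbf{y})}G(\bar{x},\bar{\mathbf{y}},\bar{p})$ up to an $o(\norm{(x^j,\mathbf{y}^j)-(\bar{x},\bar{\mathbf{y}})})$ error), gives
\[
0\le \inp{\nabla_{(x,\mathbf{y})}G(\bar{x},\bar{\mathbf{y}},\bar{p}),\,(x^j,\mathbf{y}^j)-(\bar{x},\bar{\mathbf{y}})}+o(\norm{(x^j,\mathbf{y}^j)-(\bar{x},\bar{\mathbf{y}})}).
\]
Dividing by $\norm{(x^j,\mathbf{y}^j)-(\bar{x},\bar{\mathbf{y}})}$ and passing to the limit over the arbitrary sequence yields $-\nabla_{(x,\mathbf{y})}G(\bar{x},\bar{\mathbf{y}},\bar{p})\in\widehat{\mathcal{N}}_{\mathcal{Z}}(\bar{x},\bar{\mathbf{y}})$, i.e. the first line of \eqref{stationary0}.

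The main obstacle, and the reason a naive argument fails, is that the admissible $p$-perturbation radius $\varsigma(\delta)$ may shrink much more slowly than the $(x,\mathbf{y})$-displacement $\delta$: a direct joint Taylor expansion at $(\bar{x},\bar{\mathbf{y}},\bar{p})$ would leave a remainder of order $o(\norm{q^j-\bar{p}})=o(\varsigma(\delta_j))$, which need not be $o(\delta_j)$ and could therefore dominate the $(x,\mathbf{y})$-linear term. The decomposition circumvents this precisely by never expanding in the $p$-direction — the $p$-increment is disposed of by a sign via $p$-maximality, and only an expansion in $(x,\mathbf{y})$ with $q^j$ held fixed is performed, whose remainder is genuinely $o(\delta_j)$.
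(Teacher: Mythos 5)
Your proof is correct and follows essentially the same route as the paper's: both use the left inequality to dispose of the $p$-increment via local $p$-maximality of $\bar{p}$, then Taylor-expand only in $(x,\mathbf{y})$ with the restricted maximizer (your $q^j$, the paper's $\tilde{p}(\bar{z}+w)$) frozen, and finally replace $\nabla_{(x,\mathbf{y})}G(\bar{x},\bar{\mathbf{y}},q^j)$ by $\nabla_{(x,\mathbf{y})}G(\bar{x},\bar{\mathbf{y}},\bar{p})$ using $q^j\to\bar{p}$; your sequence-based phrasing of the regular normal cone versus the paper's tangent-cone phrasing is only cosmetic. (One harmless slip: $G$ is not linear in $p$, since $h$ is composed outside the linear map $p\mapsto F(x,\mathbf{y})p$, but your argument only uses that $G$ is $C^1$, which holds.)
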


\begin{proof}
Let $(\bar{x},\bar{\mathbf{y}},\bar{p})$ be a local minimax point. To simplify the notation, denote $\bar{z}=(\bar{x},\bar{\mathbf{y}})$. According to the definition of local minimax points, there exist a $\delta_0>0$ and a function $\varsigma:\mathbb{R}_+\rightarrow \mathbb{R}_+$ satisfying $\varsigma(\delta)\rightarrow 0$ as $\delta\rightarrow 0$, such that for any $\delta\in (0,\delta_0]$ and $(z,p)\in \mathcal{Z}\times \mathcal{P}_k$ satisfying $\norm{z-\bar{z}}\leq \delta$ and $\norm{p-\bar{p}}\leq \delta$, we have
\begin{equation}\label{stationary1}
G(\bar{z},p)\leq G(\bar{z},\bar{p}) \leq \max_{p'\in \{p\in \mathcal{P}_k:  \norm{p-\bar{p}}\leq \varsigma(\delta)\}} G(z,p').
\end{equation}
Obviously, the first inequality in \eqref{stationary1} implies  the second assertion in \eqref{stationary0}.

Next, we verify the first assertion in \eqref{stationary0}.
For any $z\in \mathcal{Z}$, let $\mathcal{T}_\mathcal{Z}(z)$ be the tangent cone of $\mathcal{Z}$ at $z$ (see \cite[Definition 6.1]{RW2009variational}) and
$$\tilde{p}(z)\in \argmax_{p'\in \{p\in \mathcal{P}_k:  \norm{p-\bar{p}}\leq \varsigma(\delta)\}} G(z,p').$$

From the second inequality in (\ref{stationary1}), we have, for any $w\in\mathcal{T}_\mathcal{Z}(z)$ with $\norm{w}= \delta$ and $\bar{z}+w \in \mathcal{Z}$, that
\begin{align*}
0 &\leq G(\bar{z}+w,\tilde{p}(\bar{z}+w)) - G(\bar{z},\bar{p}) \\
&\leq G(\bar{z}+w,\tilde{p}(\bar{z}+w)) -  G(\bar{z},\tilde{p}(\bar{z}+w)) + G(\bar{z},\tilde{p}(\bar{z}+w)) - G(\bar{z},\bar{p}) \\
&\leq G(\bar{z}+w,\tilde{p}(\bar{z}+w)) -  G(\bar{z},\tilde{p}(\bar{z}+w))\\
&= \nabla_z G(\bar{z},\tilde{p}(\bar{z}+w))^\top w  + o(\norm{w})\\
&= \nabla_z G(\bar{z},\bar{p})^\top w + (\nabla_z G(\bar{z},\tilde{p}(\bar{z}+w)) - \nabla_z G(\bar{z},\bar{p}) )^\top w  + o(\norm{w})\\
&= \nabla_z G(\bar{z},\bar{p})^\top w  + o(\norm{w}),
\end{align*}
where the last equality follows from $\norm{\tilde{p}(\bar{z}+w) - \bar{p}}\leq \varsigma(\delta)\rightarrow 0$ as $\delta\to 0$, which implies
$$(\nabla_z G(\bar{z},\tilde{p}(\bar{z}+w)) - \nabla_z G(\bar{z},\bar{p}) )^\top w = o(\norm{w}).$$
Thus, $\nabla_z G(\bar{z},\bar{p})^\top w \geq 0$ for any $w\in\mathcal{T}_\mathcal{Z}(z)$, which indicates (see \cite[Proposition 6.5]{RW2009variational}) the first assertion in \eqref{stationary0}.
\qed
\end{proof}

It is noteworthy that the necessary condition for local minimax points in Proposition \ref{Prop2} is also a necessary condition for saddle points, see e.g. \cite{RHLNSH2020non,PS2011nonconvex,NSHLR2019solving}. $\mathcal{Z}(\bar{x})$ is a convex set because of the positive semidefiniteness of $\mathbf{M}.$
If we consider $(x,\mathbf{y})$ individually, it then leads to the concept of block coordinatewise stationarity (see e.g. \cite{XY2013block}) as follows:
\begin{equation}
\label{BS}
\begin{cases}
0\in \nabla_{x} G(\bar{x},\bar{\mathbf{y}},\bar{p}) + \widehat{\mathcal{N}}_{\mathcal{Z}(\bar{\mathbf{y}})}(\bar{x}),\\
0\in \nabla_{\mathbf{y}} G(\bar{x},\bar{\mathbf{y}},\bar{p}) + \mathcal{N}_{\mathcal{Z}(\bar{x})}(\bar{\mathbf{y}}),\\
0\in -\nabla_{p}G(\bar{x},\bar{\mathbf{y}},\bar{p}) + \mathcal{N}_{\mathcal{P}_k}(\bar{p}).
\end{cases}
\end{equation}

Condition \eqref{BS} is a weaker necessary condition than \eqref{stationary0} for local optimality of
problem \eqref{D-DREC} (see e.g. \cite[Remark 2.2]{XY2013block}).
The second assertion in \eqref{BS} corresponds to a necessary condition for local optimality of the mathematical programming with linear complementarity constraint (MPLCC). Consider
\begin{equation}
\label{gs8}
\begin{array}{cl}
\min\limits_{\mathbf{y}}& G(\bar{x},\mathbf{y},\bar{p}) \\
\mathrm{s.t.} &0\leq \mathbf{y}  \bot (\mathbf{M}+\epsilon \mathbf{I} ) \mathbf{y}  + \mathbf{q}(\bar{x}) \geq 0.
\end{array}
\end{equation}

To characterize optimality conditions of MPLCC, we define the following index sets:
\begin{align*}
\mathcal{I}_{+0}(\mathbf{y})&=\{i:\mathbf{y}_i>0,\,\,  ((\mathbf{M}+\epsilon \mathbf{I})  \mathbf{y}  + \mathbf{q}(\bar{x}))_i=0\},\\
\mathcal{I}_{0+}(\mathbf{y})&=\{i:\mathbf{y}_i=0,\, \,  ((\mathbf{M}+\epsilon \mathbf{I})  \mathbf{y}  + \mathbf{q}(\bar{x}))_i>0\},\\
\mathcal{I}_{00}(\mathbf{y})&=\{i:\mathbf{y}_i=0,\,\,  ((\mathbf{M}+\epsilon \mathbf{I})  \mathbf{y}  + \mathbf{q}(\bar{x}))_i=0\}.
\end{align*}

\begin{definition}[stationary points for MPLCC, \cite{CY2009class}] \label{Def5}
(i) We say that $\mathbf{y}^*$ is a (weak stationary) W-stationary point of problem \eqref{gs8} if there exist multipliers $(\lambda,\mu)\in\mathbb{R}^{mk}\times\mathbb{R}^{mk}$ such that
\begin{equation}
\label{gs9}
\begin{aligned}
&\nabla_\mathbf{y} G(\bar{x},\mathbf{y}^*,\bar{p}) - \lambda - (\mathbf{M}^\top+\epsilon \mathbf{I}) \mu = 0,\\
& \lambda_i =0 ~\text{for}~ i\in \mathcal{I}_{+0}(\mathbf{y}^*),~ \mu_i =0 ~\text{for}~ i\in \mathcal{I}_{0+}(\mathbf{y}^*).
\end{aligned}
\end{equation}

(ii) We say that $\mathbf{y}^*$ is a (Clarke stationary) C-stationary point of problem \eqref{gs8} if there exist $(\lambda,\mu)\in\mathbb{R}^{mk}\times\mathbb{R}^{mk}$ satisfying \eqref{gs9} and
$$\lambda_i\mu_i\geq 0~\text{for}~ i\in \mathcal{I}_{00}(\mathbf{y}^*).$$

(iii) We say that $\mathbf{y}^*$ is an (Mordukhovich stationary) M-stationary point of problem \eqref{gs8} if there exist $(\lambda,\mu)\in\mathbb{R}^{mk}\times\mathbb{R}^{mk}$ satisfying \eqref{gs9} and
$$\lambda_i>0,~\mu_i>0~\text{or}~ \lambda_i\mu_i = 0 ~\text{for}~ i\in \mathcal{I}_{00}(\mathbf{y}^*).$$

(iv) We say that $\mathbf{y}^*$ is an (strong stationary) S-stationary point of problem \eqref{gs8} if there exist $(\lambda,\mu)\in\mathbb{R}^{mk}\times\mathbb{R}^{mk}$ satisfying \eqref{gs9} and
$$\lambda_i\geq 0,~\mu_i\geq 0~\text{for}~ i\in \mathcal{I}_{00}(\mathbf{y}^*).$$
\end{definition}

Obviously, we have the following observation:
$$\text{S-stationarity} \Rightarrow \text{M-stationarity} \Rightarrow \text{C-stationarity} \Rightarrow \text{W-stationarity}.$$
For more details about the optimality conditions of the mathematical programming with equilibrium constraints (MPEC), we refer to monograph \cite[Chapter 3]{LPR1996mathematical}.

Combined \eqref{BS} with Definition \ref{Def5}, we give the following concepts of block coordinatewise stationary points of problem \eqref{DDREC}.

\begin{definition}[block coordinatewise $\bullet$-stationary points of \eqref{DDREC}]\label{Def3}
Let $(\bar{x},\bar{\mathbf{y}},\bar{p})\in \mathcal{Z} \times \mathcal{P}_k$. We say it is a block coordinatewise $\bullet$-stationary point of problem \eqref{D-DREC} if it satisfies
\begin{equation*}
\begin{cases}
&0\in \nabla_x G(\bar{x},\bar{\mathbf{y}},\bar{p})+ \widehat{\mathcal{N}}_{\mathcal{Z}(\bar{\mathbf{y}})}(\bar{x}),\\
& \bar{\mathbf{y}} ~\text{is a $\bullet$-stationary point of problem \eqref{gs8}} \, {\rm with}\, \epsilon=0,\\
&0\in -\nabla_{p}G(\bar{x},\bar{\mathbf{y}},\bar{p}) + \mathcal{N}_{\mathcal{P}_k}(\bar{p}),
\end{cases}
\end{equation*}
where ``$\bullet$" can be W, C, M and S.
\end{definition}

Analogously, we give the block coordinatewise stationary point of regularized problem \eqref{DREC-1}.
Since there is a unique solution of LCP$(\mathbf{M}+\epsilon I, \mathbf{q}(\bar{x}))$ for any $\bar{x}\in X$, we use the following definition of block coordinatewise stationary points for problem \eqref{DREC-1}.
\begin{definition}[block coordinatewise stationary points of \eqref{DREC-1}]\label{WSP}
We call $(\bar{x},\bar{\mathbf{y}},\bar{p})\in \mathcal{Z}_\epsilon \times \mathcal{P}_k$ a block coordinatewise stationary point of problem \eqref{DREC-4}, if it satisfies
\begin{equation}
\label{WS}
\begin{cases}
0\in \nabla_x G(\bar{x},\bar{\mathbf{y}},\bar{p})+ \widehat{\mathcal{N}}_{\mathcal{Z}_\epsilon(\bar{\mathbf{y}})}(\bar{x}),\\
\bar{\mathbf{y}} \in  \mathrm{SOL}(\mathbf{M}+\epsilon I, \mathbf{q}(\bar{x})),\\
0\in -\nabla_{p}G(\bar{x},\bar{\mathbf{y}},\bar{p}) + \mathcal{N}_{\mathcal{P}_k}(\bar{p}).
\end{cases}
\end{equation}
\end{definition}

The following lemma shows that a point satisfying \eqref{WS} must be a block coordinatewise S-stationary point of problem \eqref{DREC-1}.

\begin{lemma}\label{Lem3}
For fixed $\bar{x}\in X$ and $\bar{p}\in\mathcal{P}_k$,
$\hat{\mathbf{y}}_\epsilon(\bar{x})$ is an S-stationary point of the MPLCC \eqref{gs8}.
\end{lemma}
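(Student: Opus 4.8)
The plan is to turn the unique solvability of the regularized complementarity system into an explicit multiplier certificate. Because $\mathbf{M}+\epsilon I$ is positive definite, LCP$(\mathbf{M}+\epsilon I,\mathbf{q}(\bar{x}))$ has exactly one solution, so the feasible set of \eqref{gs8} collapses to the singleton $\{\hat{\mathbf{y}}_\epsilon(\bar{x})\}$ and $\mathbf{y}^*:=\hat{\mathbf{y}}_\epsilon(\bar{x})$ is automatically its global minimizer. It therefore suffices to construct multipliers $(\lambda,\mu)$ satisfying the S-stationarity system of Definition \ref{Def5}(iv) at $\mathbf{y}^*$; no optimality argument beyond this trivial one is available, since a one-point feasible set admits no feasible descent directions, so the whole content of the lemma lies in the multiplier construction.

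First I would record the index partition $P:=\mathcal{I}_{+0}(\mathbf{y}^*)$, $Q:=\mathcal{I}_{0+}(\mathbf{y}^*)$, $R:=\mathcal{I}_{00}(\mathbf{y}^*)$ determined by the LCP solution, and abbreviate $B:=\mathbf{M}+\epsilon I$. The requirement \eqref{gs9} fixes $\lambda_P=0$ and $\mu_Q=0$ and reads $\nabla_\mathbf{y}G(\bar{x},\mathbf{y}^*,\bar{p})=\lambda+B^\top\mu$, while S-stationarity additionally asks for $\lambda_R\ge0$ and $\mu_R\ge0$, the blocks $\lambda_Q$ and $\mu_P$ staying free. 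I would solve this system block by block: every principal submatrix of the positive definite matrix $B$ is again positive definite and hence nonsingular, so the $P$-block of the equation pins down $\mu_P$ once $\mu_R$ has been chosen, the $Q$-block then yields $\lambda_Q$, and the $R$-block expresses $\lambda_R$ affinely in $\mu_R$. In the nondegenerate case $R=\emptyset$ no sign conditions survive, and S-stationarity is immediate from the unique solvability of the $P$- and $Q$-blocks.

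The delicate case is $R\neq\emptyset$, where $\mu_R\ge0$ and $\lambda_R\ge0$ must be secured simultaneously. Eliminating $\mu_P$ rewrites $\lambda_R$ as $c-S\mu_R$, in which $S$ is the transposed Schur complement of $B_{PP}$ in the principal block of $B$ indexed by $P\cup R$; a short computation shows that $S$ inherits positive definiteness from $B$, so that $S$ is a $P$-matrix. The heart of the proof is then to exhibit some $\mu_R\ge0$ for which $c-S\mu_R\ge0$ as well, and this is the step I expect to be the main obstacle: positive definiteness controls the diagonal behaviour of $S$ but not the joint sign pattern of the reduced system, so the argument must lean either on a strict-complementarity hypothesis forcing $R=\emptyset$ or on the concrete complementarity information carried by $\mathbf{y}^*$ to guarantee feasibility of this last inequality system. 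Once such a $\mu_R$ is produced, reading off $\mu_P$, $\lambda_Q$ and $\lambda_R$ from the preceding blocks completes the S-stationarity certificate.
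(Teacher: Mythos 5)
Your reduction is sound as far as it goes, and your framing is the right one: since $\mathbf{M}+\epsilon I$ is positive definite, the feasible set of \eqref{gs8} is the singleton $\{\hat{\mathbf{y}}_\epsilon(\bar{x})\}$, so optimality is automatic and the entire content of the lemma is the existence of multipliers. Your block elimination is also correct (principal submatrices and Schur complements of a possibly nonsymmetric positive definite matrix are again positive definite, hence your $S$ is a P-matrix). But the proposal stops exactly where a proof would have to be completed: in the degenerate case $R=\mathcal{I}_{00}(\hat{\mathbf{y}}_\epsilon(\bar{x}))\neq\emptyset$ you never exhibit $\mu_R\geq 0$ with $c-S\mu_R\geq 0$; you only speculate that strict complementarity or ``concrete complementarity information'' might rescue the step. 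As a proof of Lemma \ref{Lem3}, this is a genuine gap.

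What your reduction has actually uncovered, however, is that this gap cannot be closed: the lemma is false as stated when $\mathcal{I}_{00}\neq\emptyset$, because $c$ contains the objective gradient, which is arbitrary. Take $m=k=1$, $M(\xi^1)=0$, $q(\bar{x},\xi^1)=0$, $\theta\equiv 0$, $h(u)=u$, $f(x,y,\xi)=-y$. Then LCP$(\epsilon,0)$ has the unique solution $\hat{y}_\epsilon(\bar{x})=0$, the single index lies in $\mathcal{I}_{00}$, and \eqref{gs9} with the S-stationarity signs of Definition \ref{Def5}(iv) requires $-1=\lambda+\epsilon\mu$ with $\lambda\geq 0$, $\mu\geq 0$, which is infeasible --- precisely your one-dimensional case $S=\epsilon>0$, $c=-1<0$. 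This also bears on the paper's own proof, which is a two-line appeal to \cite{GL2013notes} on the ground that $\mathbf{y}$ and $(\mathbf{M}+\epsilon\mathbf{I})\mathbf{y}+\mathbf{q}(\bar{x})$ are affine: in standard MPEC theory, affine constraint data yield only M-stationarity of local minimizers (via MPEC-ACQ), while S-stationarity requires MPEC-LICQ, which fails automatically whenever $\mathcal{I}_{00}\neq\emptyset$ since then $mk+\abs{\mathcal{I}_{00}}$ gradient vectors in $\mathbb{R}^{mk}$ would need to be linearly independent. So both your construction and the paper's citation establish the conclusion only under strict complementarity, $\mathcal{I}_{00}=\emptyset$, where your $P$- and $Q$-block elimination already finishes the argument. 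The statement that does hold in general is M-stationarity of $\hat{\mathbf{y}}_\epsilon(\bar{x})$ (in your notation: allow $\lambda_i\mu_i=0$ or $\lambda_i,\mu_i>0$ on $R$), and this weaker conclusion is all that is actually used downstream: the proof of Theorem \ref{Th5} only propagates the product signs $\lambda_i^j\mu_i^j\geq 0$ to the limit to obtain C-stationarity.
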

\begin{proof} The feasible set of \eqref{gs8} with $\epsilon>0$ has a unique vector $\hat{\mathbf{y}}_\epsilon(\bar{x})$.
From \cite[Proposition 2.2, (ii) and (iii)]{GL2013notes}, $\hat{\mathbf{y}}_\epsilon(\bar{x})$ is an S-stationary point since
 both $\mathbf{y}$ and $({\mathbf{M}} +\epsilon \mathbf{I})\mathbf{y}+q(\bar{x})$ are linear functions of $\mathbf{y}$.
\qed
\end{proof}

\begin{remark}\label{Rem1}
Note that $\mathcal{Z}_\epsilon(\bar{\mathbf{y}})\subseteq X$ and thus $\mathcal{N}_{X}(\bar{x})\subseteq \widehat{\mathcal{N}}_{\mathcal{Z}_\epsilon(\bar{\mathbf{y}})}(\bar{x})$. Hence, if $(\bar{x},\bar{\mathbf{y}},\bar{p})\in \mathcal{Z}_\epsilon \times \mathcal{P}_k$ satisfies
\begin{equation}
\label{gs17}
\begin{cases}
0\in \nabla_x G(\bar{x},\bar{\mathbf{y}},\bar{p})+ \mathcal{N}_{X}(\bar{x}),\\
\bar{\mathbf{y}}\in  \mathrm{SOL}(\mathbf{M}+\epsilon I, \mathbf{q}(\bar{x})),\\
0\in -\nabla_{p}G(\bar{x},\bar{\mathbf{y}},\bar{p}) + \mathcal{N}_{\mathcal{P}_k}(\bar{p}),
\end{cases}
\end{equation}
then $(\bar{x},\bar{\mathbf{y}},\bar{p})$ satisfies (\ref{WS}).
 The main considerations for \eqref{gs17} are that $\mathcal{Z}_\epsilon(\bar{\mathbf{y}})$ is not convex for given $\bar{\mathbf{y}}$ if $q$ is a nonlinear function, and $\mathcal{Z}_\epsilon(\cdot)$ is not Lipschitz continuous in the sense of Hausdorff distance, which can lead to failing the following convergence analysis (see e.g. \cite{XY2013block}). In view of these, we use in the sequel $\mathcal{N}_{X}(\bar{x})$ rather than $\widehat{\mathcal{N}}_{\mathcal{Z}_\epsilon(\bar{\mathbf{y}})}(\bar{x})$.
\end{remark}

MPEC is generally difficult to deal with because its constraints fail to satisfy the standard Mangasarian-Fromovitz constraint qualification (originated from \cite{MF1967fritz}) at any feasible point \cite[Proposition 1.1]{YZZ1997exact}. We recall the definition of MPEC linear independence constraint qualification (MPEC-LICQ).

\begin{definition}[\cite{SS2000mathematical}]
We say that MPEC-LICQ holds at $\mathbf{y}$ for problem \eqref{gs8} if
$$\left\{\mathbf{e}_i : i\in \mathcal{I}_{0+}(\mathbf{y})\cup \mathcal{I}_{00}(\mathbf{y}) \right\}\cup \left\{\mathbf{M}_i^\top+\mathbf{e}_i :  i\in \mathcal{I}_{+0}(\mathbf{y})\cup \mathcal{I}_{00}(\mathbf{y}) \right\}$$
is linearly independent, where $\mathbf{e}_i$ is the $i$th column of the $mk\times mk$ identify matrix and  $\mathbf{M}_i$ is the $i$th row of the matrix $\mathbf{M}$.
\end{definition}

In what follows, we give the stationarity of problem \eqref{DREC}. To this end, we assume that for arbitrary probability distribution/measure $P\in \mathcal{P}$ there exists a corresponding density function $p:\Xi\rightarrow \mathbb{R}_+$ such that $P(\d\xi)=p(\xi)\d\xi$\footnote{We can generally assume that $P(\d\xi)=p(\xi)\mathbb{Q}(\d\xi)$ for some nominal probability distribution $\mathbb{Q}$. We know from Radon-Nikodym theorem (see e.g. \cite[Theorem 7.32]{SDR2014lectures}) that there exists such a density function $p(\xi)$ if and only if $P$ is absolutely continuous w.r.t. $\mathbb{Q}$. Here we neglect $\mathbb{Q}$ to simplify the notation.}. Denote by $\mathfrak{P}$ the collection of all density functions of $P\in\mathcal{P}$.

To define the block coordinatewise stationarity of problem \eqref{DREC}, we first give the definition of stationary points for stochastic MPEC (see Definition \ref{Def5} for the discrete version).

For fixed $(\bar{x},\bar{P})$, we consider the following stochastic MPEC problem
\begin{equation}
\label{MPEC-1}
\begin{array}{cl}
\min\limits_{y}& \theta(\bar{x}) + h(\mathbb{E}_{\bar{P}}[f(\bar{x},y(\xi),\xi)]) \\
\mathrm{s.t.}& 0\leq y(\xi) \bot M(\xi)y(\xi) + q(\bar{x},\xi) \geq 0~\text{for a.e.}~\xi\in \Xi.
\end{array}
\end{equation}

Define the following index sets:
\begin{equation}
\label{gs3}
\begin{aligned}
\mathcal{I}_{+0}(y;\xi)&=\{i:y_i(\xi)>0, (M(\xi)  y(\xi)  + q(\bar{x},\xi))_i=0\},\\
\mathcal{I}_{0+}(y;\xi)&=\{i:y_i(\xi)=0, (M(\xi)  y(\xi)  + q(\bar{x},\xi))_i>0\},\\
\mathcal{I}_{00}(y;\xi)&=\{i:y_i(\xi)=0, (M(\xi)  y(\xi)  + q(\bar{x},\xi))_i=0\}.
\end{aligned}
\end{equation}

Denote by $\mathcal{L}(\mathbb{R}^m)$ the collection of all measurable mappings from $\Xi$ to $\mathbb{R}^m$.

\begin{definition}[stationary points of problem \eqref{MPEC-1}]\label{Def7}
(i) We say that $\bar{y}$ is a W-stationary point of problem \eqref{MPEC-1} if there exist multipliers $(\lambda,\mu)\in\mathcal{L}(\mathbb{R}^m)\times\mathcal{L}(\mathbb{R}^m)$ such that for a.e. $\xi\in\Xi$,
\begin{equation}
\label{gs19}
\begin{aligned}
&\nabla h(\mathbb{E}_{\bar{P}}[f(\bar{x},\bar{y}(\xi),\xi)]) \nabla_y f(\bar{x},\bar{y}(\xi),\xi) - \lambda(\xi) - M(\xi)^\top \mu(\xi) = 0,\\
& \lambda_i(\xi) =0 ~\text{for}~ i\in \mathcal{I}_{+0}(\bar{y};\xi),~ \mu_i(\xi) =0 ~\text{for}~ i\in \mathcal{I}_{0+}(\bar{y};\xi).
\end{aligned}
\end{equation}

(ii) We say that $\bar{y}$ is a C-stationary point of problem \eqref{MPEC-1} if there exist $(\lambda,\mu)\in\mathcal{L}(\mathbb{R}^m)\times\mathcal{L}(\mathbb{R}^m)$ satisfying \eqref{gs19} and  for a.e. $\xi\in\Xi$,
$$\lambda_i(\xi)\mu_i(\xi)\geq 0~\text{for}~ i\in \mathcal{I}_{00}(\bar{y};\xi).$$

(iii) We say that $\bar{y}$ is an M-stationary point of problem \eqref{MPEC-1} if there exist $(\lambda,\mu)\in\mathcal{L}(\mathbb{R}^m)\times\mathcal{L}(\mathbb{R}^m)$ satisfying \eqref{gs19} and  for a.e. $\xi\in\Xi$,
$$\lambda_i(\xi)>0,~\mu_i(\xi)>0~\text{or}~ \lambda_i(\xi)\mu_i(\xi) = 0 ~\text{for}~ i\in \mathcal{I}_{00}(\bar{y};\xi).$$

(iv) We say that $\bar{y}$ is an S-stationary point of problem \eqref{MPEC-1} if there exist $(\lambda,\mu)\in\mathcal{L}(\mathbb{R}^m)\times\mathcal{L}(\mathbb{R}^m)$ satisfying \eqref{gs19} and  for a.e. $\xi\in\Xi$,
$$\lambda_i(\xi)\geq 0,~\mu_i(\xi)\geq 0~\text{for}~ i\in \mathcal{I}_{00}(\bar{y};\xi).$$
\end{definition}

Note that when $\Xi$ has a finite number of elements, Definition \ref{Def7} reduces to  Definition \ref{Def5}.

\begin{definition}[block coordinatewise $\bullet$-stationary points of \eqref{DREC}]
$(\bar{x}, \bar{y},\bar{P})$ is called a block coordinatewise $\bullet$-stationary point of problem \eqref{DREC} if it satisfies
\begin{equation*}
\begin{cases}
0 \in \nabla_x \theta(\bar{x}) + \nabla h(\mathbb{E}_{\bar{P}}[f(\bar{x},\bar{y}(\xi),\xi)])\mathbb{E}_{\bar{P}}[\nabla_x f(\bar{x},\bar{y}(\xi),\xi)] + \mathcal{N}_X(\bar{x}),\\
\bar{y} ~\text{is a $\bullet$-stationary point of problem \eqref{MPEC-1}},\\
0 \in  -\nabla h(\mathbb{E}_{\bar{P}}[f(\bar{x},\bar{y}(\xi),\xi)]) f(\bar{x},\bar{y}(\cdot),\cdot) + \mathcal{N}_\mathfrak{P}(\bar{p}),
\end{cases}
\end{equation*}
where ``$\bullet$" can be W, C, M and S, $\bar{p}$ is the density function of $\bar{P}$ and $\mathcal{N}_\mathfrak{P}(\bar{p}):=\{v\in \mathcal{L}(\mathbb{R}): \int_\Xi v(\xi)(p(\xi) - \bar{p})\d\xi \leq 0, \forall p \in \mathfrak{P}\}$.
\end{definition}

\subsection{Convergence analysis}\label{Sec4-2}

In this subsection, we study the convergence of block coordinatewise stationary points of \eqref{DREC-1} defined by \eqref{gs17}. We first consider the convergence of the block coordinatewise stationary points as $\epsilon\downarrow 0$ for a fixed $k$ in the following theorem.

\begin{theorem}\label{Th5}
Let $(x_\epsilon,\mathbf{y}_\epsilon,p_\epsilon)$ be a block coordinatewise stationary point of problem \eqref{DREC-1} defined by \eqref{gs17} and $(x^*,\mathbf{y}^*,p^*)$ be an accumulation point of $(x_\epsilon,\mathbf{y}_\epsilon,p_\epsilon)$ as $\epsilon\downarrow 0$. Suppose further that MPEC-LICQ holds at $\mathbf{y}^*$ for problem \eqref{gs8} with $(\bar{x},\bar{p})=(x^*,p^*)$ and $\epsilon=0$.  Then $(x^*,\mathbf{y}^*,p^*)$ is a block coordinatewise C-stationary point of problem \eqref{DDREC}.
\end{theorem}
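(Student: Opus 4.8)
The plan is to verify, at the accumulation point $(x^*,\mathbf{y}^*,p^*)$, the three conditions of Definition \ref{Def3} with $\bullet=\mathrm C$, by passing to the limit in the defining system \eqref{gs17} of the regularized stationary points. Throughout I work along the subsequence $\epsilon\downarrow0$ for which $(x_\epsilon,\mathbf{y}_\epsilon,p_\epsilon)\to(x^*,\mathbf{y}^*,p^*)$. The $x$- and $p$-blocks are the routine ones: since $\theta,h,f$ are continuously differentiable, $\nabla_xG$ and $\nabla_pG$ are continuous, and since $X$ and $\mathcal{P}_k$ are closed convex, the normal-cone maps $\mathcal{N}_X(\cdot)$ and $\mathcal{N}_{\mathcal{P}_k}(\cdot)$ are outer semicontinuous. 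Hence the first and third inclusions in \eqref{gs17} pass to the limit, giving $0\in\nabla_xG(x^*,\mathbf{y}^*,p^*)+\mathcal{N}_X(x^*)$ and $0\in-\nabla_pG(x^*,\mathbf{y}^*,p^*)+\mathcal{N}_{\mathcal{P}_k}(p^*)$. Because $\mathcal{Z}(\mathbf{y}^*)\subseteq X$, we have $\mathcal{N}_X(x^*)=\widehat{\mathcal{N}}_X(x^*)\subseteq\widehat{\mathcal{N}}_{\mathcal{Z}(\mathbf{y}^*)}(x^*)$ (cf. Remark \ref{Rem1}), so the $x$-block condition of Definition \ref{Def3} holds.

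The substance lies in the $\mathbf{y}$-block. By Lemma \ref{Lem3}, $\mathbf{y}_\epsilon=\hat{\mathbf{y}}_\epsilon(x_\epsilon)$ is an S-stationary point of \eqref{gs8} at $(\bar x,\bar p)=(x_\epsilon,p_\epsilon)$, so there exist multipliers $(\lambda^\epsilon,\mu^\epsilon)$ with
\begin{equation*}
\nabla_\mathbf{y}G(x_\epsilon,\mathbf{y}_\epsilon,p_\epsilon)-\lambda^\epsilon-(\mathbf{M}^\top+\epsilon I)\mu^\epsilon=0,
\end{equation*}
together with the S-stationarity sign/complementarity relations relative to the index sets of $\mathbf{y}_\epsilon$. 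First I would pass to a further subsequence on which $\mathcal{I}_{+0}(\mathbf{y}_\epsilon),\mathcal{I}_{0+}(\mathbf{y}_\epsilon),\mathcal{I}_{00}(\mathbf{y}_\epsilon)$ are constant, say $I_{+0},I_{0+},I_{00}$ (possible, as there are finitely many partitions of $\{1,\dots,mk\}$). Letting $\epsilon\downarrow0$ in $0\le\mathbf{y}_\epsilon\bot(\mathbf{M}+\epsilon I)\mathbf{y}_\epsilon+\mathbf{q}(x_\epsilon)\ge0$ shows $\mathbf{y}^*\in\mathrm{SOL}(\mathbf{M},\mathbf{q}(x^*))$, and a direct continuity argument yields
\begin{align*}
&\mathcal{I}_{+0}(\mathbf{y}^*)\subseteq I_{+0}\subseteq\mathcal{I}_{+0}(\mathbf{y}^*)\cup\mathcal{I}_{00}(\mathbf{y}^*),\\
&\mathcal{I}_{0+}(\mathbf{y}^*)\subseteq I_{0+}\subseteq\mathcal{I}_{0+}(\mathbf{y}^*)\cup\mathcal{I}_{00}(\mathbf{y}^*),\\
&I_{00}\subseteq\mathcal{I}_{00}(\mathbf{y}^*),
\end{align*}
so only the biactive indices at $\mathbf{y}^*$ can migrate among the stabilized sets.

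The hard part will be the boundedness of $\{(\lambda^\epsilon,\mu^\epsilon)\}$, which is exactly where MPEC-LICQ enters. Arguing by contradiction, normalize by $t_\epsilon:=\norm{(\lambda^\epsilon,\mu^\epsilon)}\to\infty$: using $\lambda^\epsilon_i=0$ on $I_{+0}$ and $\mu^\epsilon_i=0$ on $I_{0+}$, the stationarity equation divided by $t_\epsilon$ has left-hand side $\nabla_\mathbf{y}G(x_\epsilon,\mathbf{y}_\epsilon,p_\epsilon)/t_\epsilon\to0$ (the gradient stays bounded along the convergent sequence), while a subsequence of the unit vectors $(\lambda^\epsilon,\mu^\epsilon)/t_\epsilon$ converges to some $(\bar\lambda,\bar\mu)\ne0$. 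Passing to the limit (so that $\epsilon\mathbf{e}_i\to0$) produces a nontrivial relation
\begin{equation*}
0=\sum_{i\in I_{0+}\cup I_{00}}\bar\lambda_i\mathbf{e}_i+\sum_{i\in I_{+0}\cup I_{00}}\bar\mu_i\mathbf{M}_i^\top .
\end{equation*}
By the inclusions above, the vectors appearing here lie in the family $\{\mathbf{e}_i:i\in\mathcal{I}_{0+}(\mathbf{y}^*)\cup\mathcal{I}_{00}(\mathbf{y}^*)\}\cup\{\mathbf{M}_i^\top:i\in\mathcal{I}_{+0}(\mathbf{y}^*)\cup\mathcal{I}_{00}(\mathbf{y}^*)\}$, whose linear independence is MPEC-LICQ at $\mathbf{y}^*$ for \eqref{gs8} with $\epsilon=0$; this contradicts $(\bar\lambda,\bar\mu)\ne0$, so $\{(\lambda^\epsilon,\mu^\epsilon)\}$ is bounded.

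Finally, extracting a convergent subsequence $(\lambda^\epsilon,\mu^\epsilon)\to(\lambda^*,\mu^*)$ and letting $\epsilon\downarrow0$ in the stationarity equation (using $\epsilon\mu^\epsilon\to0$) gives $\nabla_\mathbf{y}G(x^*,\mathbf{y}^*,p^*)-\lambda^*-\mathbf{M}^\top\mu^*=0$. The remaining conditions follow from the inclusions: on $\mathcal{I}_{+0}(\mathbf{y}^*)\subseteq I_{+0}$ one has $\lambda^\epsilon_i=0$, so $\lambda^*_i=0$; on $\mathcal{I}_{0+}(\mathbf{y}^*)\subseteq I_{0+}$ one has $\mu^*_i=0$. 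For a biactive index $i\in\mathcal{I}_{00}(\mathbf{y}^*)$ exactly one of three cases occurs along the stabilized subsequence: $i\in I_{+0}$ forces $\lambda^*_i=0$; $i\in I_{0+}$ forces $\mu^*_i=0$; and $i\in I_{00}$ gives $\lambda^\epsilon_i,\mu^\epsilon_i\ge0$ by S-stationarity, hence $\lambda^*_i,\mu^*_i\ge0$. In every case $\lambda^*_i\mu^*_i\ge0$, which is precisely C-stationarity and explains why the biactive migration destroys S- and M-stationarity but preserves C-stationarity. This completes the $\mathbf{y}$-block and hence the proof.
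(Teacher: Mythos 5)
Your proposal is correct and follows essentially the same route as the paper's proof: invoke Lemma \ref{Lem3} to get S-stationary multipliers for the regularized problems, control the index sets near $\mathbf{y}^*$, use MPEC-LICQ in a normalization/contradiction argument to bound the multipliers, and pass to the limit to recover C-stationarity (the sign condition surviving exactly because $\lambda_i^\epsilon\mu_i^\epsilon\geq 0$ in all cases). The only cosmetic differences are that you stabilize the index sets by a subsequence and normalize by $\norm{(\lambda^\epsilon,\mu^\epsilon)}$, whereas the paper uses the inclusions $\mathcal{I}_{+0}(\mathbf{y}^*)\subseteq \mathcal{I}_{+0}^j(\mathbf{y}_{\epsilon_j})$, $\mathcal{I}_{0+}(\mathbf{y}^*)\subseteq \mathcal{I}_{0+}^j(\mathbf{y}_{\epsilon_j})$ for large $j$ and normalizes by $\norm{\mu^j}$ alone.
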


\begin{proof}
Since $(x^*,\mathbf{y}^*,p^*)$ is an accumulation point of $(x_\epsilon,\mathbf{y}_\epsilon,p_\epsilon)$ as $\epsilon\downarrow 0$, there exists a sequence $\{\epsilon_j\}_{j= 1}^\infty$ with $\epsilon_j\downarrow 0$ as $j\rightarrow \infty$, such that $(x_{\epsilon_j},\mathbf{y}_{\epsilon_j},p_{\epsilon_j})\rightarrow (x^*,\mathbf{y}^*,p^*)$ as $j\rightarrow \infty$.
Based on \eqref{gs17}, we have
\begin{equation*}
\begin{cases}
0\in \nabla_x G(x_{\epsilon_j},\mathbf{y}_{\epsilon_j},p_{\epsilon_j}) + \mathcal{N}_X(x_{\epsilon_j}),\\
\mathbf{y}_{\epsilon_j}=\hat{\mathbf{y}}_{\epsilon_j}(x_{\epsilon_j}),\\
0\in -\nabla_{p}G(x_{\epsilon_j},\mathbf{y}_{\epsilon_j},p_{\epsilon_j}) + \mathcal{N}_{\mathcal{P}_k}(p_{\epsilon_j}).
\end{cases}
\end{equation*}
We know from Definitions \ref{Def5} and \ref{Def3} and Lemma \ref{Lem3} that
\begin{equation*}
\label{gs15}
\begin{cases}
0\in \nabla_x G(x_{\epsilon_j},\mathbf{y}_{\epsilon_j},p_{\epsilon_j}) + \mathcal{N}_X(x_{\epsilon_j}),\\
\begin{cases}
\nabla_\mathbf{y} G(x_{\epsilon_j},\mathbf{y}_{\epsilon_j},p_{\epsilon_j}) - \lambda^j - (\mathbf{M}^\top +\epsilon_j I) \mu^j = 0,\\
\lambda_i^j =0 ~\text{for}~ i\in \mathcal{I}_{+0}^j(\mathbf{y}_{\epsilon_j}),~ \mu_i^j =0 ~\text{for}~ i\in \mathcal{I}_{0+}^j(\mathbf{y}_{\epsilon_j}),\\
\lambda_i^j\geq0,~\mu_i^j\geq0~\text{for}~ i\in \mathcal{I}_{00}^j(\mathbf{y}_{\epsilon_j}),
\end{cases}\\
0\in -\nabla_{p}G(x_{\epsilon_j},\mathbf{y}_{\epsilon_j},p_{\epsilon_j}) + \mathcal{N}_{\mathcal{P}_k}(p_{\epsilon_j}),
\end{cases}
\end{equation*}
where $\{\lambda^j\}_{j=1}^\infty$ and $\{\mu^j\}_{j=1}^\infty$ are two sequences of multipliers, and
\begin{align*}
\mathcal{I}_{+0}^j(\mathbf{y}_{\epsilon_j})&=\{i:(\mathbf{y}_{\epsilon_j})_i>0, ((\mathbf{M} + \epsilon_j I) \mathbf{y}_{\epsilon_j}  + \mathbf{q}(x_{\epsilon_j}))_i=0\},\\
\mathcal{I}_{0+}^j(\mathbf{y}_{\epsilon_j})&=\{i:(\mathbf{y}_{\epsilon_j})_i=0, ((\mathbf{M} + \epsilon_j I) \mathbf{y}_{\epsilon_j}  + \mathbf{q}(x_{\epsilon_j}))_i>0\},\\
\mathcal{I}_{00}^j(\mathbf{y}_{\epsilon_j})&=\{i:(\mathbf{y}_{\epsilon_j})_i=0, ((\mathbf{M} + \epsilon_j I) \mathbf{y}_{\epsilon_j}  + \mathbf{q}(x_{\epsilon_j}))_i=0\}.
\end{align*}
Thus, for sufficiently large $j$, we have
\begin{equation}
\label{gs14}
\begin{cases}
0\in \nabla_x G(x_{\epsilon_j},\mathbf{y}_{\epsilon_j},p_{\epsilon_j}) + \mathcal{N}_X(x_{\epsilon_j}),\\
\begin{cases}
\nabla_\mathbf{y} G(x_{\epsilon_j},\mathbf{y}_{\epsilon_j},p_{\epsilon_j}) - \lambda^j - (\mathbf{M}^\top+\epsilon_j I) \mu^j = 0,\\
\lambda_i^j =0 ~\text{for}~ i\in \mathcal{I}_{+0}(\mathbf{y}^*),~ \mu_i^j =0 ~\text{for}~ i\in \mathcal{I}_{0+}(\mathbf{y}^*),
\end{cases}\\
0\in -\nabla_{p}G(x_{\epsilon_j},\mathbf{y}_{\epsilon_j},p_{\epsilon_j}) + \mathcal{N}_{\mathcal{P}_k}(p_{\epsilon_j}),
\end{cases}
\end{equation}
where
\begin{align*}
\mathcal{I}_{+0}(\mathbf{y}^*)&=\{i:\mathbf{y}^*_i>0, (\mathbf{M} \mathbf{y}^*  + \mathbf{q}(x^*))_i=0\},\\
\mathcal{I}_{0+}(\mathbf{y}^*)&=\{i:\mathbf{y}^*_i=0, (\mathbf{M} \mathbf{y}^*  + \mathbf{q}(x^*))_i>0\}.
\end{align*}

Next, we verify the boundedness of sequences $\{\lambda^j\}_{j=1}^\infty$ and $\{\mu^j\}_{j=1}^\infty$. Notice that if $\{\mu^j\}_{j=1}^\infty$ is bounded, from the boundedness of $\{(x_{\epsilon_j},\mathbf{y}_{\epsilon_j},p_{\epsilon_j})\}_{j=1}^\infty$ and continuous differentiability of $G$, $\{\lambda^j\}_{j=1}^\infty$ is bounded. Now we assume that $\{\mu^j\}_{j=1}^\infty$ is unbounded. Consider, by dividing $\norm{\mu^j}$, that
\begin{equation*}
\frac{ \nabla_\mathbf{y} G(x_{\epsilon_j},\mathbf{y}_{\epsilon_j},p_{\epsilon_j}) }{\norm{\mu^j}} - \frac{\lambda^j}{\norm{\mu^j}} - (\mathbf{M}^\top+\epsilon_j I) \frac{\mu^j}{\norm{\mu^j}} = 0,
\end{equation*}
which can deduce, according to $\norm{\mu^j}\rightarrow\infty$ as $j\rightarrow\infty$, that
\begin{equation}
\label{gs13}
\frac{\lambda^j}{\norm{\mu^j}} + \mathbf{M}^\top \frac{\mu^j}{\norm{\mu^j}} \rightarrow 0
\end{equation}
as $j\rightarrow\infty$. Since $\lambda_i^j =0$ for $i\in \mathcal{I}_{+0}(\mathbf{y}^*)$ and $\mu_i^j =0$ for $i\in \mathcal{I}_{0+}(\mathbf{y}^*)$, we rewrite \eqref{gs13} as
\begin{equation*}
\sum_{i\in \mathcal{I}_{0+}(\mathbf{y}^*)\cup \mathcal{I}_{00}(\mathbf{y}^*)}\frac{\lambda_i^j}{\norm{\mu^j}} \mathbf{e}_i + \sum_{i\in \mathcal{I}_{+0}(\mathbf{y}^*)\cup \mathcal{I}_{00}(\mathbf{y}^*)} \frac{\mu_i^j}{\norm{\mu^j}} \mathbf{M}^\top_i  \rightarrow 0,
\end{equation*}
where
$\mathcal{I}_{00}(\mathbf{y}^*)=\{i:\mathbf{y}^*_i=0, (\mathbf{M} \mathbf{y}^*  + \mathbf{q}(x^*))_i=0\}.$

Then, by MPEC-LICQ at $\mathbf{y}^*$ for problem \eqref{gs8} with $(\bar{x},\bar{p})=(x^*,p^*)$ and $\epsilon=0$, we obtain
$$\frac{\lambda_i^j}{\norm{\mu^j}}\rightarrow 0, i\in \mathcal{I}_{0+}(\mathbf{y}^*)\cup \mathcal{I}_{00}(\mathbf{y}^*) ~\text{and}~ \frac{\mu_i^j}{\norm{\mu^j}}\rightarrow 0, i\in \mathcal{I}_{+0}(\mathbf{y}^*)\cup \mathcal{I}_{00}(\mathbf{y}^*)$$
as $k\rightarrow\infty$, which contradicts $\frac{\mu_i^j}{\norm{\mu^j}}\nrightarrow 0$ for some $i\in \mathcal{I}_{+0}(\mathbf{y}^*)\cup \mathcal{I}_{00}(\mathbf{y}^*)$. Therefore, both $\{\lambda^j\}_{j=1}^\infty$ and $\{\mu^j\}_{j=1}^\infty$
are bounded. Without loss of generality, we assume that $\lambda^j\rightarrow \lambda^*$ and $\mu^j\rightarrow \mu^*$ as $j\rightarrow\infty$. Therefore, by letting $j\rightarrow \infty$, we have from \eqref{gs14} that
\begin{equation}
\label{gs16}
\begin{cases}
0\in \nabla_x G(x^*,\mathbf{y}^*,p^*) + \mathcal{N}_X(x^*),\\
\begin{cases}
\nabla_\mathbf{y} G(x^*,\mathbf{y}^*,p^*) - \lambda^* - \mathbf{M}^\top \mu^* = 0,\\
\lambda_i^* =0 ~\text{for}~ i\in \mathcal{I}_{+0}(\mathbf{y}^*),~ \mu_i^* =0 ~\text{for}~ i\in \mathcal{I}_{0+}(\mathbf{y}^*),
\end{cases}\\
0\in -\nabla_{p}G(x^*,\mathbf{y}^*,p^*) + \mathcal{N}_{\mathcal{P}_k}(p^*).
\end{cases}
\end{equation}
Moreover, for $\lambda_i^j\mu_i^j\geq 0$ for $i=1,\ldots, mk$, we have
 $\lambda_i^*\mu_i^*\geq 0$ for $i\in \mathcal{I}_{00}(\mathbf{y}^*)$.
 This together with \eqref{gs16} means that $(x^*,\mathbf{y}^*,p^*)$ is a block coordinatewise C-stationary point of problem \eqref{DDREC}.
\qed
\end{proof}

Let $\Xi^k:=\{\xi^1,\cdots,\xi^k\}$ and its corresponding Voronoi tessellation be $\Xi_1,\cdots,\Xi_k$.  For any feasible point of \eqref{DDREC}, denoted by $(x^k, \mathbf{y}^k, p^k)$, we make the following notations. Define the following density function:
\begin{equation*}
\mathbf{p}^k(\xi)=\sum_{i=1}^k \frac{p_i^k}{\int_{\Xi_i}1\d\xi} \mathbf{1}_{\Xi_i}(\xi) ~\text{for}~\xi\in\Xi,
\end{equation*}
where $p_i^k$ is the $i$th component of $p^k=(p^k_1,\cdots, p^k_k)^\top$ for $i=1,\cdots,k$. We denote
$$\mathfrak{P}_k=\left\{\mathbf{p}^k(\cdot)=\sum_{i=1}^k \frac{p^k_i}{\int_{\Xi_i}1\d\xi} \mathbf{1}_{\Xi_i}(\cdot): p^k\in \mathcal{P}_k \right\}.$$
Denote by
$$y^k(\cdot)=\sum_{i=1}^k y^k(\xi^i) \mathbf{1}_{\Xi_i}(\cdot),$$
where $y^k(\xi^i)$ is the $i$th block of $\mathbf{y}^k=(y^k(\xi^1)^\top,\cdots,y^k(\xi^k)^\top)^\top$ for $i=1,\cdots,k$. Denote by
\begin{equation*}
\overline{M}_k(\cdot)= \sum_{i=1}^k M(\xi^i)\mathbf{1}_{\Xi_i}(\cdot)~\text{and}~ \overline{q}_k(x,\cdot)= \sum_{i=1}^k q(x,\xi^i)\mathbf{1}_{\Xi_i}(\cdot).
\end{equation*}

If, further, $(x^k,\mathbf{y}^k,p^k)$ is a block coordinatewise C-stationary point of problem \eqref{DDREC}, according to the definition of block coordinatewise C-stationary point, we have
\begin{equation}
\label{gs22}
\begin{cases}
0\in \nabla_x G(x^k,\mathbf{y}^k,p^k) + \mathcal{N}_X(x^k),\\
\begin{cases}
\nabla_\mathbf{y} G(x^k,\mathbf{y}^k,p^k) - \lambda^k - \mathbf{M}^\top \mu^k = 0,\\
\lambda_i^k =0 ~\text{for}~ i\in \mathcal{I}_{+0}(\mathbf{y}^k),~ \mu_i^k =0 ~\text{for}~ i\in \mathcal{I}_{0+}(\mathbf{y}^k),\\
\lambda_i^k\mu_i^k\geq 0 ~\text{for}~ i\in \mathcal{I}_{00}(\mathbf{y}^k),
\end{cases}\\
0\in -\nabla_{p}G(x^k,\mathbf{y}^k,p^k) + \mathcal{N}_{\mathcal{P}_k}(p^k),
\end{cases}
\end{equation}
where
\begin{align*}
\mathcal{I}_{+0}(\mathbf{y}^k)&=\{i:\mathbf{y}^k_i>0, (\mathbf{M} \mathbf{y}^k  + \mathbf{q}(x^k))_i=0\},\\
\mathcal{I}_{0+}(\mathbf{y}^k)&=\{i:\mathbf{y}^k_i=0, (\mathbf{M} \mathbf{y}^k  + \mathbf{q}(x^k))_i>0\},\\
\mathcal{I}_{00}(\mathbf{y}^k)&=\{i:\mathbf{y}^k_i=0, (\mathbf{M} \mathbf{y}^k  + \mathbf{q}(x^k))_i=0\}.
\end{align*}

Denote by
\begin{equation*}
\lambda^k(\cdot)= \sum_{i=1}^k \lambda^k(\xi^i)\mathbf{1}_{\Xi_i}(\cdot)~\text{and}~ \mu^k(\cdot)= \sum_{i=1}^k \mu^k(\xi^i)\mathbf{1}_{\Xi_i}(\cdot),
\end{equation*}
where $\lambda^k(\xi^i)$ is the $i$th block of $\lambda^k=(\lambda^k(\xi^1)^\top,\cdots,\lambda^k(\xi^k)^\top)^\top$ and $\mu^k(\xi^i)$ is the $i$th block of $\mu^k=(\mu^k(\xi^1)^\top,\cdots,\mu^k(\xi^k)^\top)^\top$ for $i=1,\cdots,k$.

Then, we have the reformulation of \eqref{gs22} as follows: for every $\xi\in\Xi$,
\begin{equation}
\label{gs23}
\begin{cases}
0 \in \nabla_x \theta(x^k) + \nabla h(\mathbb{E}_{P^k}[f(x^k,y^k(\xi),\xi)])\mathbb{E}_{P^k}[\nabla_x f(x^k,y^k(\xi),\xi)] + \mathcal{N}_X(x^k),\\
\begin{cases}
\nabla h(\mathbb{E}_{P^k}[f(x^k,y^k(\xi),\xi)]) \nabla_y f(x^k,y^k(\xi),\xi) - \lambda^k(\xi) - \overline{M}_k(\xi)^\top \mu^k(\xi) = 0,\\
\lambda_i^k(\xi) =0 ~\text{for}~ i\in \mathcal{I}_{+0}(y^k;\xi),~ \mu_i^k(\xi) =0 ~\text{for}~ i\in \mathcal{I}_{0+}(y^k;\xi),\\
\lambda_i^k(\xi)\mu_i^k(\xi)\geq 0 ~\text{for}~ i\in \mathcal{I}_{00}(y^k;\xi),
\end{cases}\\
0 \in  -\nabla h(\mathbb{E}_{P^k}[f(x^k,y^k(\xi),\xi)]) f(x^k,y^k(\cdot),\cdot) + \mathcal{N}_{\mathfrak{P}_k}(\mathbf{p}^k),
\end{cases}
\end{equation}
where $P^k$ is the probability distribution of density function $\mathbf{p}^k$ and
\begin{align*}
\mathcal{I}_{+0}(y^k;\xi)&=\{i:y_i^k(\xi)>0, (\overline{M}_k(\xi)  y^k(\xi)  + \overline{q}_k(x^k,\xi))_i=0\},\\
\mathcal{I}_{0+}(y^k;\xi)&=\{i:y_i^k(\xi)=0, (\overline{M}_k(\xi)  y^k(\xi)  + \overline{q}_k(x^k,\xi))_i>0\},\\
\mathcal{I}_{00}(y^k;\xi)&=\{i:y_i^k(\xi)=0, (\overline{M}_k(\xi)  y^k(\xi)  + \overline{q}_k(x^k,\xi))_i=0\}.
\end{align*}

For $y_1,y_2\in \mathcal{L}(\mathbb{R}^m)$, define the inner product and its induced norm by $$\inp{y_1,y_2}=\int_{\Xi} y_1(\xi)^\top y_2(\xi)\d\xi$$
and
$$\norm{y_1 - y_2}_{\mathcal{L}_2}=\left( \int_{\Xi} \norm{y_1(\xi)- y_2(\xi)}^2 \d\xi \right)^{\frac{1}{2}}.$$
Based on $\mathcal{L}_2$-norm, we can define the convergence relationship, denoted by $\overset{\mathcal{L}_2}{\rightarrow}$, and the deviation distance, denoted by $\d_{\mathcal{L}_2}(\cdot,\cdot)$.

The following theorem claims that: under certain conditions, a sequence of C-stationary points of problem \eqref{DDREC} converges to block coordinatewise C-stationary points of problem \eqref{DREC} as $k\rightarrow \infty$.

\begin{theorem}
Let $\{(x^k,\mathbf{y}^k,p^k)\}$ be a sequence of block coordinatewise C-stationary points of problem \eqref{DDREC}. Suppose that: (i) $x^k\rightarrow \bar{x}$, $y^k \overset{\mathcal{L}_2}{\rightarrow} \bar{y}$, $\mathbf{p}^k \overset{\mathcal{L}_2}{\rightarrow} \bar{\mathbf{p}}$ and $\mu^k \overset{\mathcal{L}_2}{\rightarrow}\bar{\mu}$ as $k\rightarrow \infty$; (ii) there exists $\kappa:\Xi\rightarrow \mathbb{R}_+$ satisfying $\int_{\Xi} \kappa(\xi)^2 \d\xi < \infty$, such that $\norm{f(x^k,y^k(\xi),\xi)}\leq \kappa(\xi)$, $\norm{f(\bar{x},\bar{y}(\xi),\xi)}\leq \kappa(\xi)$, $\norm{\nabla_x f(x^k,y^k(\xi),\xi)} \leq \kappa(\xi)$, $\norm{\nabla_x f(\bar{x},\bar{y}(\xi),\xi)} \leq \kappa(\xi)$, $\abs{\mathbf{p}^k(\xi)}\leq \kappa(\xi)$ and $\abs{\bar{\mathbf{p}}(\xi)}\leq \kappa(\xi)$ for a.e. $\xi\in\Xi$; (iii) $\beta_k \rightarrow 0$ as $k\rightarrow \infty$ where $\beta_k$ is defined in \eqref{beta}; (iv) $\d_{\mathcal{L}_2}(\mathfrak{P},\mathfrak{P}_k)\rightarrow 0$ as $k\rightarrow \infty$. Then $(\bar{x},\bar{y},\bar{\mathbf{p}})$ is a block coordinatewise C-stationary point of problem \eqref{DREC}.
\end{theorem}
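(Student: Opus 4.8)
The plan is to pass to the limit, block by block, in the reformulated C-stationarity system \eqref{gs23} written for each $(x^k,\mathbf{y}^k,p^k)$, and to recover the three inclusions defining a block coordinatewise C-stationary point of \eqref{DREC}. Since the convergences in hypothesis (i) are either in $\mathbb{R}^n$ or in $\mathcal{L}_2$, I would first extract a subsequence (not relabeled) along which $y^k(\xi)\to\bar y(\xi)$, $\mathbf{p}^k(\xi)\to\bar{\mathbf p}(\xi)$ and $\mu^k(\xi)\to\bar\mu(\xi)$ for a.e. $\xi\in\Xi$; this is legitimate because the limit triple $(\bar x,\bar y,\bar{\mathbf p})$ is fixed and the conclusion only asserts the existence of multipliers. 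Using hypothesis (iii), $\beta_k\to0$ in \eqref{beta}, together with the Lipschitz continuity of $M$ and $q$, the Voronoi-piecewise-constant data satisfy $\overline M_k(\xi)\to M(\xi)$ and $\overline q_k(x^k,\xi)\to q(\bar x,\xi)$ a.e., since on $\Xi_i$ one has $\|\xi-\xi^i\|\le\beta_k$.

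Next I would establish the two expectation limits that drive the outer blocks, namely $\mathbb{E}_{P^k}[f(x^k,y^k(\xi),\xi)]\to\mathbb{E}_{\bar P}[f(\bar x,\bar y(\xi),\xi)]$ and $\mathbb{E}_{P^k}[\nabla_x f(x^k,y^k(\xi),\xi)]\to\mathbb{E}_{\bar P}[\nabla_x f(\bar x,\bar y(\xi),\xi)]$, where $\bar P$ has density $\bar{\mathbf p}$. Each expectation is an integral against $\mathbf{p}^k$; splitting off the density change and using the continuous differentiability of $f(\cdot,\cdot,\xi)$ gives a.e. convergence of the integrands, while hypothesis (ii) supplies the $L^1$-dominating function $\kappa^2$, so dominated convergence applies. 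Feeding these limits through the continuous maps $\nabla\theta$ and $\nabla h$ and invoking the outer semicontinuity of the convex normal-cone map $\mathcal{N}_X$ yields the first inclusion $0\in\nabla_x\theta(\bar x)+\nabla h(\mathbb{E}_{\bar P}[f(\bar x,\bar y,\cdot)])\mathbb{E}_{\bar P}[\nabla_x f(\bar x,\bar y,\cdot)]+\mathcal{N}_X(\bar x)$.

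For the $p$-block I would first verify feasibility of the limit, $\bar{\mathbf p}\in\mathfrak P$: since $p^k\in\mathcal{P}_k$ gives $\sum_i p_i^k\Psi(\xi^i)\in\Gamma$, and $\int_\Xi\Psi\,\mathbf{p}^k$ differs from $\sum_i p_i^k\Psi(\xi^i)$ by a quantity controlled by $\beta_k$ through the continuity of $\Psi$, passing $k\to\infty$ (using $\mathbf{p}^k\to\bar{\mathbf p}$ and closedness of $\Gamma$) forces $\mathbb{E}_{\bar P}[\Psi(\xi)]\in\Gamma$, i.e. $\bar P\in\mathcal P$. The third inclusion is a variational inequality: given $\mathbf p\in\mathfrak P$, choose via hypothesis (iv) a sequence $\mathbf p^{(k)}\in\mathfrak P_k$ with $\mathbf p^{(k)}\to\mathbf p$ in $\mathcal{L}_2$; writing the level-$k$ inequality $\int_\Xi v^k(\xi)(\mathbf p^{(k)}(\xi)-\mathbf p^k(\xi))\,\d\xi\le0$ with $v^k=\nabla h(\mathbb{E}_{P^k}[f])f(x^k,y^k(\cdot),\cdot)$, the scalar $\nabla h$ factor converges by the expectation limit and $v^k\to\bar v$ strongly in $\mathcal{L}_2$ by domination, so letting $k\to\infty$ delivers $\int_\Xi\bar v(\xi)(\mathbf p(\xi)-\bar{\mathbf p}(\xi))\,\d\xi\le0$, the desired normal-cone inclusion at $\bar{\mathbf p}$.

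The crux is the $y$-block, i.e. showing $\bar y$ is a C-stationary point of \eqref{MPEC-1}. Solving the level-$k$ stationarity equation for $\lambda^k$ and using $\mu^k\to\bar\mu$, $\overline M_k\to M$, and the gradient limit gives a.e. convergence $\lambda^k(\xi)\to\bar\lambda(\xi):=\nabla h(\mathbb{E}_{\bar P}[f])\nabla_y f(\bar x,\bar y(\xi),\xi)-M(\xi)^\top\bar\mu(\xi)$, which is exactly the stationarity equation \eqref{gs19} for the limit. The main obstacle is that the biactive sets may jump along the sequence: an index $i\in\mathcal{I}_{+0}(\bar y;\xi)$ or $i\in\mathcal{I}_{0+}(\bar y;\xi)$ must eventually lie in the corresponding level-$k$ set (by the strict sign of $\bar y_i(\xi)$ or of the residual, and a.e. convergence), forcing $\bar\lambda_i(\xi)=0$ respectively $\bar\mu_i(\xi)=0$ and giving the index conditions of \eqref{gs19}, whereas an index in $\mathcal{I}_{00}(\bar y;\xi)$ may occupy any of the three level-$k$ classes. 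What rescues C-stationarity is that at each level the product inequality $\lambda_i^k(\xi)\mu_i^k(\xi)\ge0$ holds for every index $i$ (on $\mathcal{I}_{+0}^k$ one has $\lambda_i^k=0$, on $\mathcal{I}_{0+}^k$ one has $\mu_i^k=0$, and on $\mathcal{I}_{00}^k$ it is imposed by C-stationarity), a pointwise condition stable under the jumps; passing to the a.e. limit yields $\bar\lambda_i(\xi)\bar\mu_i(\xi)\ge0$ for all $i$, in particular on $\mathcal{I}_{00}(\bar y;\xi)$, which is precisely the C-stationarity sign condition. This explains why the limit is C-stationary rather than the stronger M- or S-stationary, and completes the three inclusions defining a block coordinatewise C-stationary point of \eqref{DREC}.
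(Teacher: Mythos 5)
Your proposal is correct and follows essentially the same route as the paper's own proof: pass to the limit in the discretized stationarity system \eqref{gs23}, using dominated convergence (via the bound $\kappa^2$) for the two expectation limits and the $x$-block inclusion, the eventual inclusions $\mathcal{I}_{+0}(\bar{y};\xi)\subseteq \mathcal{I}_{+0}(y^k;\xi)$ and $\mathcal{I}_{0+}(\bar{y};\xi)\subseteq \mathcal{I}_{0+}(y^k;\xi)$ together with the level-$k$ product inequality $\lambda_i^k(\xi)\mu_i^k(\xi)\geq 0$ (valid at \emph{every} index) for the $y$-block C-stationarity conditions, and hypothesis (iv) to transfer the $p$-block variational inequality to the limit. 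Your two refinements---explicitly extracting an a.e.-convergent subsequence from the $\mathcal{L}_2$ convergences, and verifying feasibility $\bar{\mathbf{p}}\in\mathfrak{P}$ via the closedness of $\Gamma$---are points the paper leaves implicit, but they do not alter the argument.
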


\begin{proof}
Note that
\begin{align*}
&~~~~\norm{\mathbb{E}_{P^k}[f(x^k,y^k(\xi),\xi)]- \mathbb{E}_{\bar{P}}[f(\bar{x},\bar{y}(\xi),\xi)]}\\
&= \norm{\int_\Xi f(x^k,y^k(\xi),\xi) \mathbf{p}^k(\xi) \d\xi -  \int_\Xi f(\bar{x},\bar{y}(\xi),\xi) \bar{\mathbf{p}}(\xi) \d\xi }\\
&\leq  \norm{\int_\Xi ( f(x^k,y^k(\xi),\xi) - f(\bar{x},\bar{y}(\xi),\xi) ) \mathbf{p}^k(\xi) \d\xi }  \\
&~~~~+   \norm{ \int_\Xi f(\bar{x},\bar{y}(\xi),\xi)  (\mathbf{p}^k(\xi) - \bar{\mathbf{p}}(\xi)) \d\xi }.
\end{align*}
Since $y^k \overset{\mathcal{L}_2}{\rightarrow} \bar{y}$ and $\mathbf{p}^k \overset{\mathcal{L}_2}{\rightarrow} \bar{\mathbf{p}}$, we have $y^k(\xi)\rightarrow \bar{y}(\xi)$ and $\mathbf{p}^k(\xi) \rightarrow \bar{\mathbf{p}}(\xi)$ for a.e. $\xi\in \Xi$ as $k\rightarrow \infty$. By the continuity of $f$, we have
$$\norm{ f(x^k,y^k(\xi),\xi) - f(\bar{x},\bar{y}(\xi),\xi) } \abs{\mathbf{p}^k(\xi)}\rightarrow 0$$
for a.e. $\xi\in \Xi$ as $k\rightarrow \infty$. Moreover,
$$\norm{ f(x^k,y^k(\xi),\xi) - f(\bar{x},\bar{y}(\xi),\xi) } \abs{\mathbf{p}^k(\xi)}\leq 2\kappa^2(\xi).$$
By Lebesgue's dominated convergence theorem, we have
\begin{align*}
&\norm{\int_\Xi ( f(x^k,y^k(\xi),\xi) - f(\bar{x},\bar{y}(\xi),\xi) ) \mathbf{p}^k(\xi) \d\xi } \\
& \leq \int_\Xi \norm{ f(x^k,y^k(\xi),\xi) - f(\bar{x},\bar{y}(\xi),\xi) } \abs{\mathbf{p}^k(\xi)} \d\xi  \rightarrow  0
\end{align*}
as $k\rightarrow \infty$, and similarly
\begin{align*}
\norm{ \int_\Xi f(\bar{x},\bar{y}(\xi),\xi)  (\mathbf{p}^k(\xi) - \bar{\mathbf{p}}(\xi)) \d\xi } &\leq  \int_\Xi \norm{f(\bar{x},\bar{y}(\xi),\xi)}  \abs{\mathbf{p}^k(\xi) - \bar{\mathbf{p}}(\xi)} \d\xi \\
&\rightarrow  0
\end{align*}
as $k\rightarrow \infty$.
Therefore,
$\mathbb{E}_{P^k}[f(x^k,y^k(\xi),\xi)] \rightarrow \mathbb{E}_{\bar{P}}[f(\bar{x},\bar{y}(\xi),\xi)]$ as $k\rightarrow \infty$.
Due to the continuous differentiability of $h$, we obtain
\begin{equation}
\label{gs26}
\nabla h(\mathbb{E}_{P^k}[f(x^k,y^k(\xi),\xi)]) \rightarrow  \nabla h(\mathbb{E}_{\bar{P}}[f(\bar{x},\bar{y}(\xi),\xi)]) ~\text{as}~k\rightarrow \infty.
\end{equation}
Analogously, we have
\begin{align*}
&~~~~\norm{\mathbb{E}_{P^k}[\nabla_x f(x^k,y^k(\xi),\xi)] -  \mathbb{E}_{\bar{P}}[\nabla_x f(\bar{x},\bar{y}(\xi),\xi)]}\\
&= \norm{\int_\Xi \nabla_x f(x^k,y^k(\xi),\xi) \mathbf{p}^k(\xi) \d\xi -  \int_\Xi \nabla_x f(\bar{x},\bar{y}(\xi),\xi) \bar{\mathbf{p}}(\xi) \d\xi }\\
&\leq \norm{\int_\Xi ( \nabla_x f(x^k,y^k(\xi),\xi) - \nabla_x f(\bar{x},\bar{y}(\xi),\xi) ) \mathbf{p}^k(\xi) \d\xi }  \\
&~~~~+   \norm{ \int_\Xi \nabla_x f(\bar{x},\bar{y}(\xi),\xi)  (\mathbf{p}^k(\xi) - \bar{\mathbf{p}}(\xi)) \d\xi } \\
&  \rightarrow  0
\end{align*}
as $k\rightarrow \infty$. We obtain
\begin{equation}
\label{gs27}
\mathbb{E}_{P^k}[\nabla_x f(x^k,y^k(\xi),\xi)] \rightarrow  \mathbb{E}_{\bar{P}}[\nabla_x f(\bar{x},\bar{y}(\xi),\xi)] ~\text{as}~k\rightarrow \infty.
\end{equation}

Thus, by letting $k\rightarrow \infty$, we obtain from \eqref{gs26}, \eqref{gs27} and the first equation of \eqref{gs23} that
\begin{equation}
\label{gs21}
0 \in \nabla_x \theta(\bar{x}) + \nabla h(\mathbb{E}_{\bar{P}}[f(\bar{x},\bar{y}(\xi),\xi)])\mathbb{E}_{\bar{P}}[\nabla_x f(\bar{x},\bar{y}(\xi),\xi)] + \mathcal{N}_X(\bar{x}).
\end{equation}

Since $M(\cdot)$, $q(\cdot,\cdot)$ are continuous and $\beta_k\rightarrow 0$ as $k\rightarrow\infty$, $\overline{M}_k(\xi)\rightarrow M(\xi)$ and $\overline{q}_k(x^k,\xi)\rightarrow q(\bar{x},\xi)$ as $k\rightarrow \infty$.
By directly letting $k\rightarrow \infty$, we obtain from the second part of \eqref{gs23} that
$$\nabla h(\mathbb{E}_{\bar{P}}[f(\bar{x},\bar{y}(\xi),\xi)]) \nabla_y f(\bar{x},\bar{y}(\xi),\xi) - \bar{\lambda}(\xi) - M(\xi)^\top \bar{\mu}(\xi) = 0$$
for a.e. $\xi\in\Xi$. Here $\bar{\lambda}(\xi)$ is the limit of $\{\lambda^k(\xi)\}$ in the sense of $\mathcal{L}_2$-norm as $k\rightarrow \infty$. Its existence is due to the convergence of sequences $\{x^k\}$, $\{y^k\}$ and $\{\mu^k\}$ as $k\to \infty$. Let $\mathcal{I}_{+0}(\bar{y};\xi)$, $\mathcal{I}_{0+}(\bar{y};\xi)$ and $\mathcal{I}_{00}(\bar{y};\xi)$ be denoted in \eqref{gs3}.
Obviously, we have for a.e. $\xi\in\Xi$ that
$\mathcal{I}_{+0}(\bar{y};\xi) \subseteq \mathcal{I}_{+0}(y^k;\xi)$ and $\mathcal{I}_{0+}(\bar{y};\xi) \subseteq \mathcal{I}_{0+}(y^k;\xi)$ for sufficiently large $k$. Thus, we have
$$\bar{\lambda}_i(\xi) =0 ~\text{for}~ i\in \mathcal{I}_{+0}(\bar{y};\xi)~\text{and}~ \bar{\mu}_i(\xi) =0 ~\text{for}~ i\in \mathcal{I}_{0+}(\bar{y};\xi).$$
By a similar discussion as the proof of Theorem \ref{Th5}, we obtain $\bar{\lambda}_i(\xi)\bar{\mu}_i(\xi)\geq 0 ~\text{for}~ i\in \mathcal{I}_{00}(\bar{y};\xi)$. To sum up, we obtain
\begin{equation}
\label{gs24}
\begin{cases}
\nabla h(\mathbb{E}_{\bar{P}}[f(\bar{x},\bar{y}(\xi),\xi)]) \nabla_y f(\bar{x},\bar{y}(\xi),\xi) - \bar{\lambda}(\xi) - M(\xi)^\top \bar{\mu}(\xi) = 0,\\
\bar{\lambda}_i(\xi) =0 ~\text{for}~ i\in \mathcal{I}_{+0}(\bar{y};\xi),~ \bar{\mu}_i(\xi) =0 ~\text{for}~ i\in \mathcal{I}_{0+}(\bar{y};\xi),\\
\bar{\lambda}_i(\xi)\bar{\mu}_i(\xi)\geq 0 ~\text{for}~ i\in \mathcal{I}_{00}(\bar{y}(\xi);\xi).
\end{cases}
\end{equation}

Since $\d_{\mathcal{L}_2}(\mathfrak{P},\mathfrak{P}_k)\rightarrow 0$ as $k\rightarrow \infty$, for any $\rho\in \mathfrak{P}$, there exists $\rho^k\in \mathfrak{P}_k$ such that $\rho^k\overset{\mathcal{L}_2}{\rightarrow} \rho$ as $k\rightarrow\infty$. Thus, we have
\begin{equation}
\label{gs2}
\rho^k - \mathbf{p}^k \overset{\mathcal{L}_2}{\rightarrow} \rho - \bar{\mathbf{p}} ~\text{and}~ f(x^k,y^k(\cdot),\cdot) \overset{\mathcal{L}_2}{\rightarrow} f(\bar{x},\bar{y}(\cdot),\cdot)
\end{equation}
as $k\rightarrow\infty$. Due to
$$0 \in  -\nabla h(\mathbb{E}_{P^k}[f(x^k,y^k(\xi),\xi)]) f(x^k,y^k(\cdot),\cdot) + \mathcal{N}_{\mathfrak{P}_k}(\mathbf{p}^k),$$
we have $\inp{\nabla h(\mathbb{E}_{P^k}[f(x^k,y^k(\xi),\xi)]) f(x^k,y^k(\cdot),\cdot),   \rho^k - \mathbf{p}^k} \leq 0.$ Based on \eqref{gs26} and \eqref{gs2}, by letting $k\to\infty$, we obtain
$$\inp{\nabla h(\mathbb{E}_{\bar{P}}[f(\bar{x},\bar{y}(\xi),\xi)]) f(\bar{x},\bar{y}(\cdot),\cdot), \rho - \bar{\mathbf{p}}} \leq 0$$
for any $\rho\in \mathfrak{P}$, which is equivalent to
\begin{equation}
\label{gs25}
0 \in  -\nabla h(\mathbb{E}_{\bar{P}}[f(\bar{x},\bar{y}(\xi),\xi)]) f(\bar{x},\bar{y}(\cdot),\cdot) + \mathcal{N}_\mathfrak{P}(\bar{\mathbf{p}}).
\end{equation}

Finally, combined \eqref{gs21} and \eqref{gs24} with  \eqref{gs25}, we complete the proof.
\qed

\end{proof}

\section{Pure characteristics demand models} \label{Sec5}

In this section, we apply our results established in the previous sections to the pure characteristics demand model when the underlying probability distribution is uncertain. In particular, we consider the distributionally robust counterpart of problem \eqref{CSW-1}:
\begin{equation*}
\begin{array}{cl}
\min\limits_{x\in X, s_t}& \frac{1}{2}\inp{x,Hx} + \inp{c,x} + \varrho \max\limits_{P\in\mathcal{P}} \sum_{t=1}^T\norm{A_t\mathbb{E}_P[s_t(\xi)] - b_t}^2 \\
\mathrm{s.t.}& 0\leq z_t(\xi) \bot M z_t(\xi) + q_t(x,\xi)\geq 0, \quad  t=1,2,\cdots,T,
\end{array}
\end{equation*}
where $\mathcal{P}:=\{P\in\mathcal{P}(\Xi): (\mathbb{E}_P[\xi] - \mu_0)^\top (\mathbb{E}_P[\xi] - \mu_0) \leq \eta \}$, $\mu_0\in\mathbb{R}^\nu$  and $\eta >0$, which assumes that the mean of $\xi$ lies in a ball of size $\eta$ centered at the estimate $\mu_0$ (see e.g. \cite[(1a)]{DY2010distributionally}). By employing the Schur complement, we can reformulate $\mathcal{P}$ as the form \eqref{AS}:
$$\mathcal{P}:=\left\{P\in\mathcal{P}(\Xi): \mathbb{E}_P\left[
\begin{pmatrix}
I & \xi -\mu_0\\
(\xi -\mu_0)^\top & \eta
\end{pmatrix} \right]
\in \Gamma \right\},$$
where the cone $\Gamma$ denotes the set of positive semidefinite $(\nu+1)\times (\nu+1)$ matrices.

The regularization and discretization problem is
\begin{equation*}
\min\limits_{x\in X}  \frac{1}{2}\inp{x,Hx} + \inp{c,x} + \varrho \max\limits_{p\in\mathcal{P}_k} \sum_{t=1}^T\norm{A_t\sum_{i=1}^k s_{t,\epsilon}(x, \xi^i)p_i - b_t}^2,
\end{equation*}
where $\{\xi^1,\cdots,\xi^k\}\subseteq \Xi$, $z_{t,\epsilon}(x,\xi^i)\in \mathrm{SOL}(M + \epsilon I, q_t(x,\xi^i))$, $z_{t,\epsilon}(x, \xi^i):= (s_{t,\epsilon}(x,\xi^i)^\top, \gamma_{t,\epsilon}(x,\xi^i))^\top$ for $i=1,\cdots,k$, $t=1,\cdots,T$ and  $$\mathcal{P}_k:=\left\{p\in\mathbb{R}_+^k: \sum_{i=1}^k p_i =1,~ \norm{\sum_{i=1}^k p_i\xi^i - \mu_0}^2 \leq \eta \right\}.$$

We adopt the following settings, which follow from \cite[Example 4.1]{CSW2015regularized}.
The utility function in market $t$ is given by
$$u_t(x,\xi)=\mathbf{C}_t\chi_1(x_2,x_3,\xi_1) - \chi_2(x_4,\xi_2)\sigma_t + x_{1t},$$
where $\mathbf{C}_t=(C_{1t},\cdots,C_{\tau t})\in\mathbb{R}^{m \times \tau}$, $C_{jt}\in \mathbb{R}^{m}$, $x_{1t}\in \mathbb{R}^m$, $x_2,x_3\in \mathbb{R}^\tau$, $x_4\in \mathbb{R}$. Let $\mathbf{x}_1=(x_{11}^\top,\cdots,x_{1T}^\top)^\top\in \mathbb{R}^{mT}$, $\mathbf{x}_2=(x_2^\top,x_3^\top,x_4)^\top$ and $x=(\mathbf{x}_1^\top,\mathbf{x}_2^\top)^\top\in \mathbb{R}^n$ with $n=mT+2\tau+1$, $\xi=(\xi_1,\xi_2)^\top:\Omega\rightarrow \Xi \subseteq \mathbb{R}^{2}$,
$\chi_1(x_2,x_3,\xi_1) = x_2+x_3\xi_1$, $\chi_2(x_4,\xi_2) = \exp(x_4\xi_2)$, $\sigma_t\in\mathbb{R}^m$.

To numerically present these convergence results in Section \ref{Sec3}, we make the following specific settings.
Set $T=1$, $m=2$, $\tau=1$, $n=5$, $b_1=(0.5,0.5)^\top$, $C_1=(2,3)^\top$, $\sigma_1=(1,2)^\top$ and $X=[0,2]^5$, $\Xi=[-1,1]^2$, $\mu_0=(0,0)^\top$. Consider
\begin{equation}
\label{NM}
\begin{array}{cl}
\min\limits_{x\in X}\max\limits_{p\in\mathcal{P}_k} & G(x,\hat{\mathbf{z}}_\epsilon(x),p):=\frac{1}{2}\inp{\mathbf{x}_1,\mathbf{x}_1} +  \norm{\sum_{i=1}^k s_{1,\epsilon}(x, \xi^i)p_i - b_1}^2.
\end{array}
\end{equation}
Here $(s_{1,\epsilon}(x, \xi^i)^\top, \gamma_{1,\epsilon}(x,\xi^i))^\top =z_{1,\epsilon}(x,\xi^i)\in \mathrm{SOL}(M + \epsilon I, q_1(x,\xi^i)), ~ i=1,\cdots,k$ and $\hat{\mathbf{z}}_\epsilon(x):=(z_{1,\epsilon}(x,\xi^1)^\top, \cdots, z_{1,\epsilon}(x,\xi^k)^\top)^\top$.

It is easy to check that $x^*=(0,0,1,0,0)^\top$ is an optimal solution of problem \eqref{NM} and the corresponding optimal value is $0$. To see this, we have
$$\chi_1(x_2^*,x_3^*,\xi_1) = x_2^*+x_3^*\xi_1=1 ~\text{and}~\chi_2(x_4^*,\xi_2) = \exp(x_4^*\xi_2) = 1.$$
Therefore, $u_1(x^*,\xi)=C_1\chi_1(x_2^*,x_3^*,\xi_1) - \chi_2(x_4^*,\xi_2)\sigma_1 + \mathbf{x}_{1}^*=(1,1)^\top$. According to \eqref{UT}, we know that the solution set of $s_1(\xi)$ is $\{(\varsigma, 1-\varsigma)^\top : 0\leq \varsigma \leq 1\}$, where the least norm solution is $(0.5,0.5)^\top$ for every $\xi\in\Xi$. This implies that $\mathbb{E}_P[s_{1,\epsilon}(x^*,\xi)]=\frac{1}{2+\epsilon} (1+\epsilon,  1+\epsilon)^\top\rightarrow (0.5,0.5)^\top$ as $\epsilon\downarrow 0$. Thus, the conditions in Proposition \ref{Prop5} hold.

Under the above settings, $\Psi(\xi)=\begin{pmatrix}
I & \xi\\
\xi^\top & \eta
\end{pmatrix}$
and $\Gamma$ is the set of positive semidefinite $3\times 3$ matrices. Assumption \ref{Assu4} holds with $P_0$ being the uniform distribution over $[-1,1]^2$ and $\alpha$ being any positive scalar less than or equal to $\eta$. Moreover, Assumption \ref{Sol} holds with $\kappa(\xi) = 1$. Therefore, the convergence results in Section \ref{Sec3} hold.

We use the following alternating iterative algorithm to report some numerical results of problem \eqref{NM}.

\begin{algorithm}
\label{Alg3}
Choose an initial point $p^0 \in \mathcal{P}_k$. Let $j=0$ and do the following two steps.

\textbf{Step 1:} Generate $x^{j}$ by solving
\begin{equation}
\label{gs28}
\min_{x\in X}~ G(x,\hat{\mathbf{z}}_\epsilon(x),p^j).
\end{equation}

\textbf{Step 2:} Generate $p^{j+1}$ by solving
\begin{equation}
\label{gs29}
\max_{p\in\mathcal{P}_k}~ G(x^{j},\hat{\mathbf{z}}_\epsilon(x^j),p).
\end{equation}

Let $j=j+1$ and go to \textbf{Step 1}.
\end{algorithm}

Due to the special structure of matrix $M$, we adopt the closed-form solution in \cite{CSW2015regularized} to compute $\hat{\mathbf{z}}_\epsilon(x^j)$. The function $G(x^j,\hat{\mathbf{z}}_\epsilon(x^j),\cdot)$ is a quadratic convex function. We can use algorithm in \cite{Y1992affine} to find a maximizer of $p^j$ in (\ref{gs29})  on the bounded convex set ${\cal P}_k$. Since ${\cal P}_k$ and $X$ are bounded, the sequence $\{(x^j,p^j)\}$ generated by Algorithm \ref{Alg3} has at least one accumulation point. We employ \eqref{gs17} as the stopping criterion. Actually, we only need to verify
$
0\in \nabla_x G(x^j,\hat{\mathbf{z}}_\epsilon(x^j),p^j)+ \mathcal{N}_{X}(x^j).
$
Due to the box structure of $X$, the projection onto $X$ can be computed easily. Thus, we stop the iteration when
\begin{equation}
\label{gs30}
\norm{x^j - \proj_X( x^j-\nabla_x G(x^j,\hat{\mathbf{z}}_\epsilon(x^j),p^j) ) } \leq 10^{-4}.
\end{equation}

We chose an initial point $p^0\in {\cal P}_k$ with $p^0_i=\frac{1}{k}, i=1,\ldots,k$.  First, for fixed $\eta=0.1, 0.2, 0.5, 1$ and sample size $k=25,2500$, we compute optimal values of problem \eqref{NM} w.r.t. $\epsilon=0.5,0.2,0.1,0.05,0.01$. We present these results in Figure \ref{fig:1}. It shows the tendency that the optimal value of problem \eqref{NM} tends to zero as $\epsilon$ goes to zero. Meanwhile, for fixed each $\epsilon=0.5,0.2,0.1,0.05,0.01$, we can observe from Figure \ref{fig:1} that the optimal value of problem \eqref{NM} increases as $\eta$ increases, which shows that the distributionally robust model \eqref{NM} works as expected.

\begin{figure}[htp]
\subfigure[Convergence of optimal values as $\epsilon$ decreases for $k=25$.]{
  \label{fig:1-1}
  \includegraphics[scale=0.4]{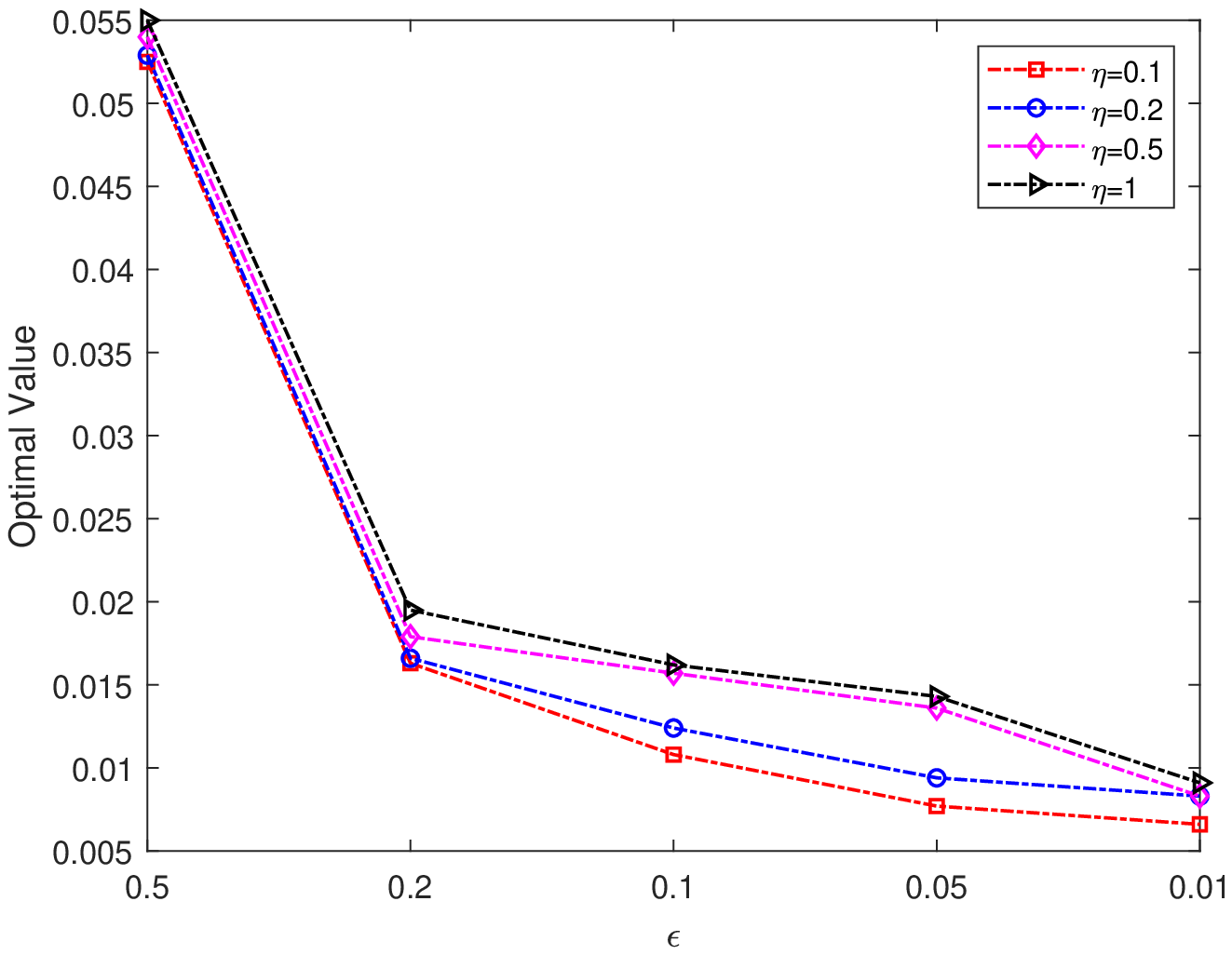}}
  \subfigure[Convergence of optimal values as $\epsilon$ decreases for $k=2500$.]{
  \label{fig:1-2}
  \includegraphics[scale=0.4]{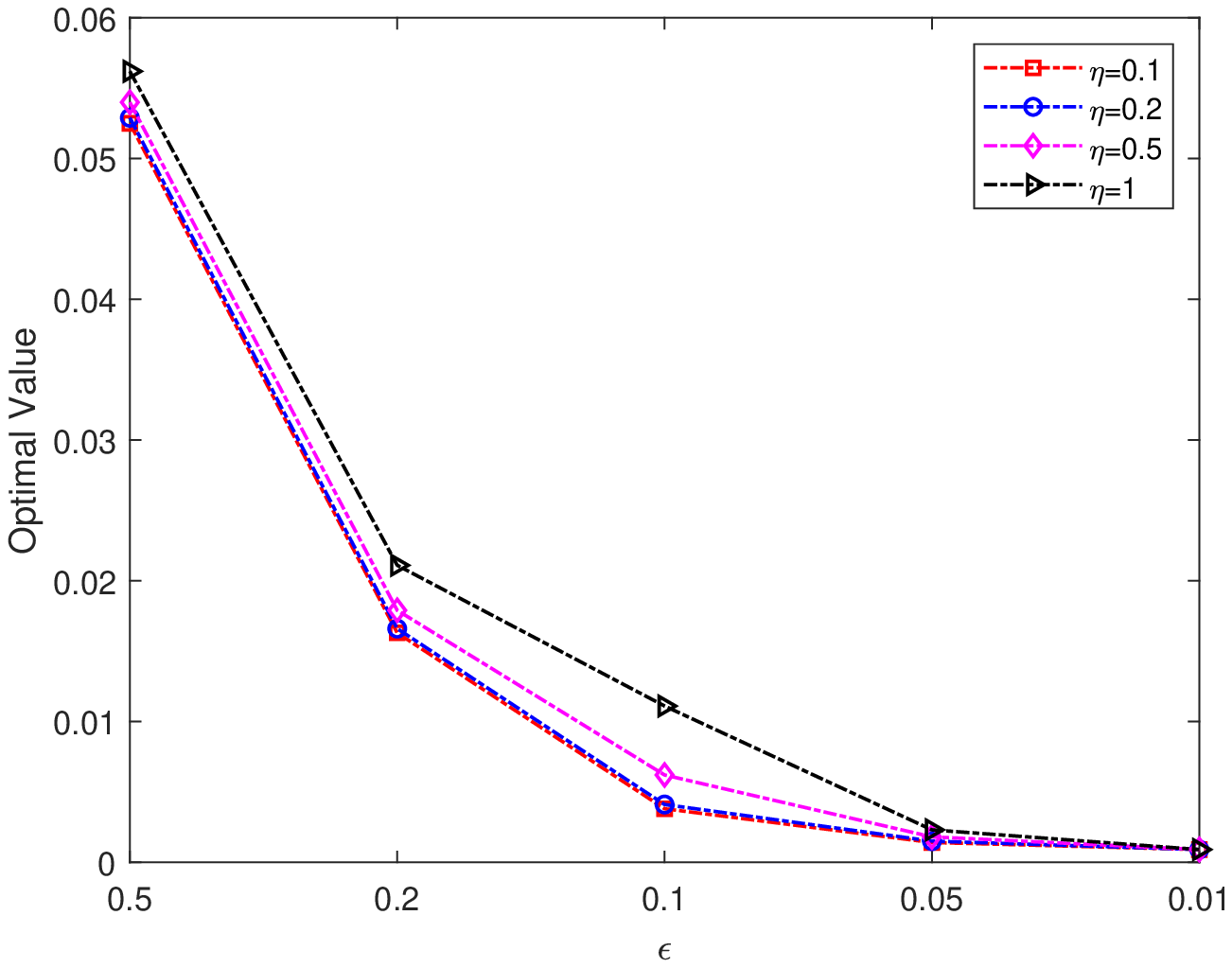}}
\caption{Convergence of optimal values as $\epsilon$ decreases.}
\label{fig:1}
\end{figure}

Furthermore, for fixed $\eta=0.1, 0.2, 0.5, 1$ and $\epsilon=0.5,0.2$, we compute optimal values of problem \eqref{NM} with different sample sizes, see Figure \ref{fig:2}. It shows that, for each fixed $\epsilon$, the optimal values converge as sample size goes to infinity. Moreover, we present in Table \ref{tab:1} the distances between $x^j$ satisfying \eqref{gs30} and the true solution $(0,0,1,0,0)^\top$ with different $\epsilon$ and $k$ for fixed $\eta=0.5$. It shows the  convergence of optimal solutions as sample size goes to infinity for fixed $\epsilon$.

\begin{figure}[htp]
\subfigure[Convergence of optimal values as sample size increases for $\epsilon=0.5$.]{
  \label{fig:2-1}
  \includegraphics[scale=0.4]{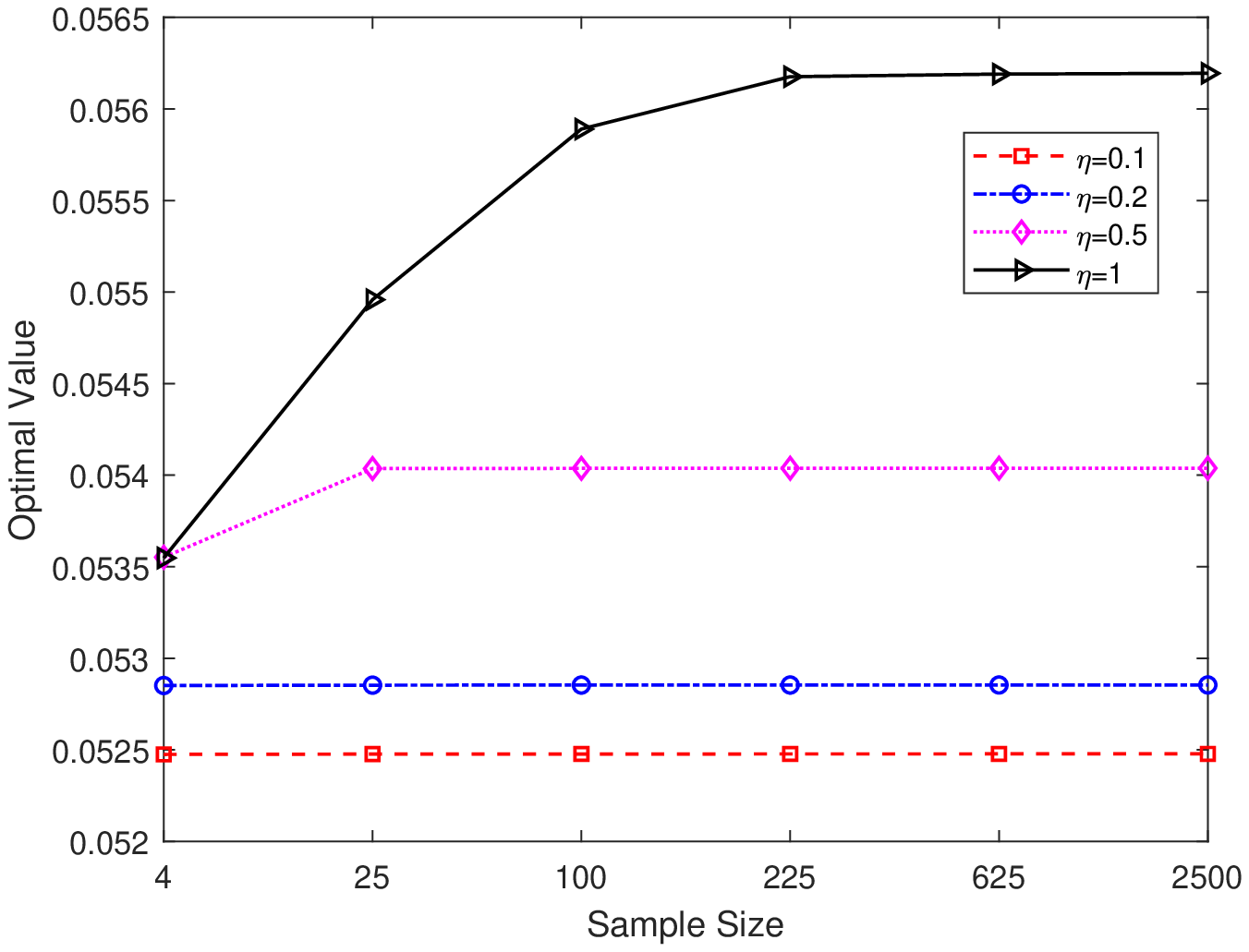}}
  \subfigure[Convergence of optimal values as sample size increases for $\epsilon=0.2$.]{
  \label{fig:2-2}
  \includegraphics[scale=0.4]{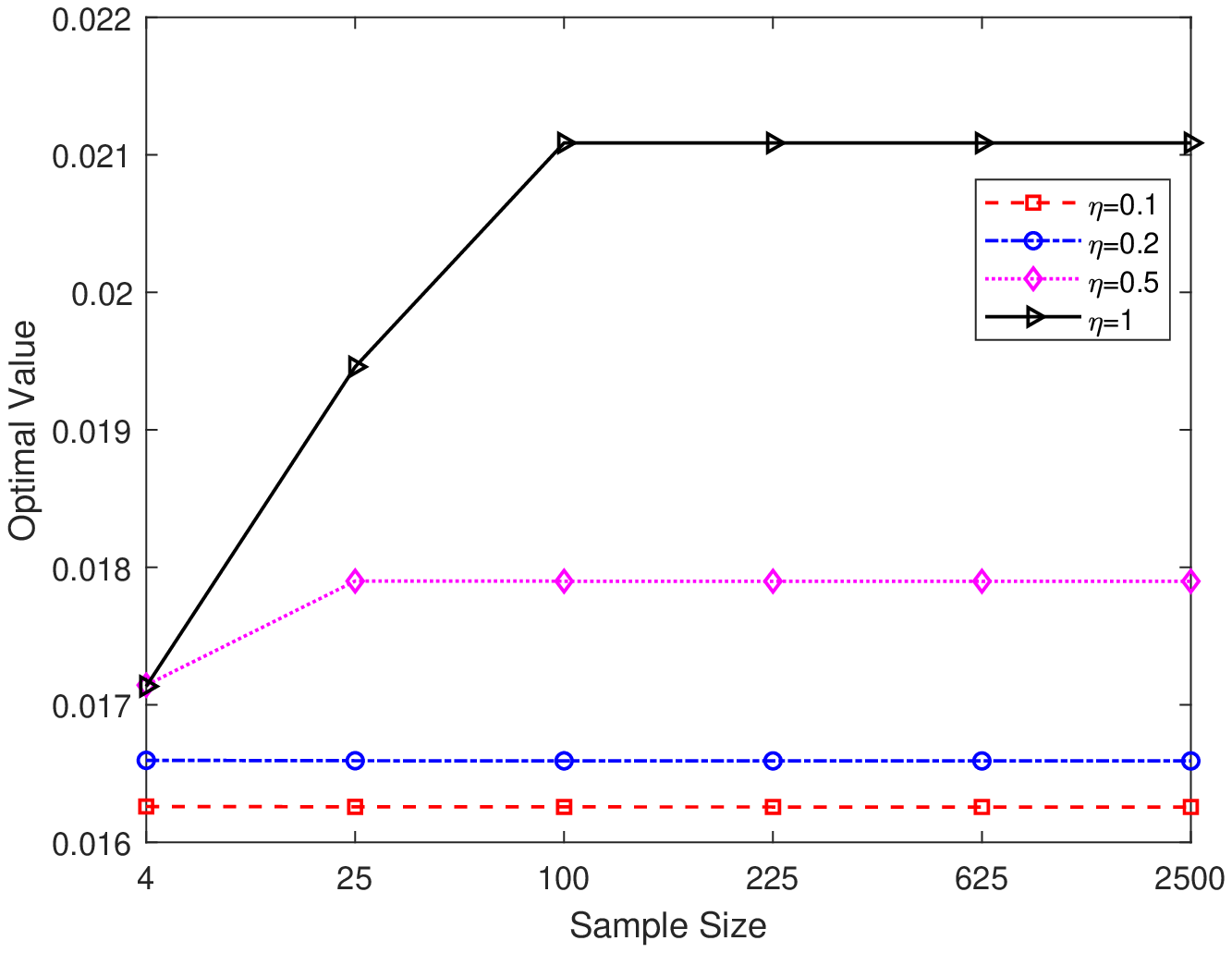}}
\caption{Optimal values w.r.t. different sample sizes.}
\label{fig:2}
\end{figure}

\begin{table}[htp]
\center
\caption{$\norm{x^j-x^*}$ with different $\epsilon$ and $k$ for fixed $\eta=0.5$
where $x^j$ satisfies \eqref{gs30} and $x^*=(0,0,1,0,0)^\top$.}
\label{tab:1}
\begin{tabular}{|c|cccccc|}
\hline
\diagbox{$\epsilon$}{$k$} & 4 & 25 & 100 & 225 & 625 & 2500 \\
\hline
0.5 & 0.5688  &  0.5210 &  0.6791 &  0.6887  &  0.7091 & 0.7202  \\
0.2 & 0.7851 & 0.4422 & 0.3195 & 0.2215 & 0.2119 & 0.2102 \\
0.1 & 0.6649  & 0.1079  & 0.0851  &  0.0853  & 0.0867  &  0.0856 \\
\hline
\end{tabular}
\end{table}

\section{Concluding remarks}\label{Sec6}

This paper considers a class of distributionally robust mathematical programs with stochastic complementarity constraints (DRMP-SCC) in the form of problem \eqref{DREC}, which arise from pure characteristics demand models under uncertainties of probability distributions of the involved random variables. Since problem \eqref{DREC} is a nonconvex-nonconcave minimax problem, minimax is not equal to maximin and thus a saddle point does not exist in general. We define global and local optimality and stationary points of problem \eqref{DREC}, and its discretization and/or regularization approximation problems \eqref{R-DREC}, \eqref{DDREC} and \eqref{DREC-1}. We provide sufficient conditions for the convergence of optimal solutions and stationary points of problem \eqref{DREC-1} as $\epsilon$ goes to zero and $k$ goes to infinity. We show that all those conditions hold for pure characteristics demand models under uncertainties. Moreover,  we use numerical results to show the effectiveness of our theoretical results.

\section*{Appendix}

\begin{proof}[The proof of Proposition \ref{Prop1}]

Denote $\bar{p}_i=P_0(\Xi_i)$ for $i=1,\cdots,k$. We verify that $\bar{p}=(\bar{p}_1,\cdots,\bar{p}_k)^\top\in\mathcal{P}_k$  for all sufficiently large $k$ in the following. Since $\Psi$ is continuous, we know from mean value theorem of integrals that
$$\mathbb{E}_{P_0}[\Psi(\xi)]=\sum_{i=1}^k \int_{\Xi_i} \Psi(\xi) \,P_0(d\xi)=\sum_{i=1}^k \Psi(\tilde{\xi}^i)P_0(\Xi_i)$$
for some $\tilde{\xi}^i\in\Xi_i$, $i=1,\cdots,k$. Then
\begin{equation}
\label{gs1}
\norm{\mathbb{E}_{P_0}[\Psi(\xi)] - \sum_{i=1}^k \bar{p}_i \Psi(\xi^i) } \leq  \sum_{i=1}^k \bar{p}_i \norm{ \Psi(\xi^i) - \Psi(\tilde{\xi}^i) }.
\end{equation}
We first consider the case that $\Xi$ is bounded. For $\alpha>0$, there exists $\delta>0$  such that if $\max_{1\leq i\leq k}\diam(\Xi_i) <\delta$, then
$$\max_{1\leq i\leq k}\norm{ \Psi(\xi^i) - \Psi(\tilde{\xi}^i) } \leq  \alpha.$$
Since $\Xi$ is bounded, we can find a sequence $\{\xi^k\}_{k=1}^\infty$ such that the corresponding Voronoi tessellation $\Xi_1,\cdots,\Xi_k,\cdots$ satisfying
$$\lim_{k\rightarrow\infty}\max_{1\leq i\leq k}\mathrm{diam}(\Xi_i)=0.$$
Hence there is $\bar{k}>0$ such that $\max_{1\leq i\leq k}\mathrm{diam}(\Xi_i)<\delta$ for any $k\ge \bar{k}$.

Then, it knows from \eqref{gs1} that
\begin{align*}
\norm{\mathbb{E}_{P_0}[\Psi(\xi)] - \sum_{i=1}^k \bar{p}_i \Psi(\xi^i) } \leq \alpha.
\end{align*}
This, together with Assumption \ref{Assu4}, indicates that
$\sum_{i=1}^k \bar{p}_i \Psi(\xi^i) \in \Gamma,$
which implies the nonemptiness of $\mathcal{P}_k$.

Now we consider the case $\Xi$ is unbounded. Let $\Xi_b:=\{\xi\in\Xi:\norm{\xi}\leq b\}$ for $b>0$.
Denote a probability distribution $\bar{P}_0$ supported on $\Xi_b$ by
$$\bar{P}_0(\Xi_a)=\frac{P_0(\Xi_a\cap \Xi_b)}{P_0(\Xi_b)}$$
for any measurable $\Xi_a\subseteq \Xi$, where $P_0$ is defined in Assumption \ref{Assu4}. Note that
$$\lim_{b\rightarrow\infty}\frac{1}{P_0(\Xi_b)}=1~\text{and}~\lim_{b\rightarrow\infty} \int_{\Xi_b} \Psi(\xi) P_0(d\xi) = \int_{\Xi} \Psi(\xi) P_0(d\xi)=\mathbb{E}_{P_0}[\Psi(\xi)].$$
We have
\begin{align*}
\lim_{b\rightarrow\infty}\int_{\Xi_b} \Psi(\xi) \bar{P}_0(d\xi) = \lim_{b\rightarrow\infty} \frac{1}{P_0(\Xi_b)}\int_{\Xi_b} \Psi(\xi) P_0(d\xi) = \mathbb{E}_{P_0}[\Psi(\xi)].
\end{align*}
Therefore, there exists $b_0>0$ such that, for any $b\geq b_0$,
$$\norm{\mathbb{E}_{\bar{P}_0}[\Psi(\xi)] - \mathbb{E}_{P_0}[\Psi(\xi)]}\leq \frac{\alpha}{2}.$$
From Assumption \ref{Assu4}, we obtain
\begin{equation}
\label{gs10}
\mathbb{E}_{\bar{P}_0}[\Psi(\xi)] + \frac{\alpha}{2}\mathbb{B}\subseteq \Gamma.
\end{equation}
Due to the boundedness of $\Xi_b$ and \eqref{gs10}, by the same proof for the case that $\Xi$
is bounded, there exists a $\bar{k}>0$  such that $\mathcal{P}_k$ is nonempty for $k\ge \bar{k}$.
\qed
\end{proof}

\end{document}